\documentclass[11pt,reqno,oneside]{amsart}

\usepackage{graphicx}
\usepackage{amssymb}
\usepackage{epstopdf}
\usepackage{cases}
\usepackage[pagewise]{lineno}

\usepackage[a4paper, total={6in, 9in}]{geometry}

\usepackage{amsmath,amsfonts,amsthm,mathrsfs,amssymb,cite}
\usepackage[usenames]{color}

\newtheorem{thm}{Theorem}[section]

\newtheorem{cor}[thm]{Corollary}
\newtheorem{lem}{Lemma}[section]

\theoremstyle{definition}
\newtheorem{defn}{Definition}[section]

\theoremstyle{remark}

\newtheorem{rem}{Remark}[section]
\numberwithin{equation}{section}

\numberwithin{equation}{section}

\usepackage{tikz}
\allowdisplaybreaks

\newcommand{\R}{\mathbb{R}} \newcommand{\mathR}{\mathbb{R}}
\newcommand{\Rn}{{\mathR^n}}

\newcommand{\calF}{\mathcal{F}}

\newcommand{\scrS}{\mathscr{S}}
\newcommand{\calC}{\mathcal{C}}

\newcommand{\supp}{\mathop{\rm supp}}
\newcommand{\dist}{\mathop{\rm dist}}

\newcommand{\Forall}{{~\forall\,}}
\newcommand{\Exists}{{~\exists\,}}
\newcommand{\st}{\textrm{~s.t.~}}
\newcommand{\aee}{\textrm{~a.e.~}}
\newcommand{\ass}{\textrm{~a.s.~}}

\newcommand{\dif}[1]{\,\mathrm{d}{#1}} 
\newcommand{\nrm}[2][]{ \| {#2} \|_{#1}}
\newcommand{\agl}[1][\cdot]{ \langle {#1} \rangle}

\usepackage{tikz}
\newcommand{\Rk}{{\mathcal{R}_{k}}}

\newcommand{\sq}[1]{{#1}}

\newcounter{saveeqn}



\title[Determining a random Schr\"odinger operator]{Determining a random Schr\"odinger operator: both potential and source are random}

\author{Jingzhi Li}
\address{Department of Mathematics, Southern University of Science and Technology, Shenzhen, China}
\email{li.jz@sustech.edu.cn}

\sq{
\author{Hongyu Liu}
\address{Department of Mathematics, City University of Hong Kong, Hong Kong SAR, China}
\email{hongyu.liuip@gmail.com, hongyliu@cityu.edu.hk}
}

\author{Shiqi Ma}
\address{Department of Mathematics and Statistics, University of Jyv\"askyl\"a, Finland}
\email{mashiqi01@gmail.com, shiqi.s.ma@jyu.fi}


\begin{document}

\begin{abstract}
	
	We study an inverse scattering problem associated with a Schr\"odinger system where both the potential and source terms are random and unknown. 
	The well-posedness of the forward scattering problem is first established in a proper sense. 
	We then derive two unique recovery results in determining the rough strengths of the random source and the random potential, by using the corresponding far-field data. 
	The first recovery result shows that a single realization of the passive scattering measurements uniquely recovers the rough strength of the random source. 
	The second one shows that, by a single realization of the backscattering data, the rough strength of the random potential can be recovered. 
	The ergodicity is used to establish the single realization recovery.
	The asymptotic arguments in our study are based on techniques from the theory of pseudodifferential operators and microlocal analysis.
	
	\medskip

	\noindent{\bf Keywords:}~~
	inverse scattering, random source and medium, ergodicity, pseudodifferential operators, microlocal analysis
	
	{\noindent{\bf 2010 Mathematics Subject Classification:}~~35Q60, 35J05, 31B10, 35R30, 78A40}
	
\end{abstract}

\maketitle

\section{Introduction} \label{sec:intro-MLmGWsSchroEqu2019}

\subsection{Mathematical formulations} \label{subsec:formu-MLmGWsSchroEqu2019}

In this paper, we are mainly concerned with the following random Schr\"odinger system
\begin{subequations} \label{eq:1-MLmGWsSchroEqu2019}
	\begin{numcases}{}
	\displaystyle{ \big( -\Delta - E + q(x,\omega) \big) u(x, \sqrt{E}, d, \omega)= f(x,\omega), \quad x\in\mathbb{R}^3, } \label{eq:1a-MLmGWsSchroEqu2019} \medskip\\
	\displaystyle{ u(x, \sqrt{E}, d, \omega)=\alpha e^{\mathrm{i}\sqrt E x\cdot d}+u^{sc}(x, \sqrt{E}, d,\omega), } \label{eq:1b-MLmGWsSchroEqu2019} \medskip\\
	\displaystyle{ \lim_{r\rightarrow\infty} r\left(\frac{\partial u^{sc}}{\partial r}-\mathrm{i}\sqrt{E} u^{sc} \right)=0,\quad r:=|x|, } \label{eq:1c-MLmGWsSchroEqu2019}
	\end{numcases}
\end{subequations}
where \sq{$\mathrm{i} := \sqrt{-1}$,} and $\omega$ in \eqref{eq:1a-MLmGWsSchroEqu2019} is a random sample belonging to $\Omega$ with $(\Omega,\mathcal F,\mathbb P)$ being a complete probability space, 
and $f(x,\omega)$ and $q(x,\omega)$ are independently distributed generalized Gaussian random fields with zero-mean and are supported in bounded domains $D_f$ and $D_q$, respectively.
$E\in\mathbb{R}_+$ is the energy level. 
In the sequel, we follow the convention to replace $E$ with $k^2$, namely $k := \sqrt{E} \in \R_+$, which can be understood as the wave number. 
In \eqref{eq:1b-MLmGWsSchroEqu2019}, ${d \in \mathbb{S}^2:=\{ x \in \R^3 \,;\, |x| = 1 \}}$ signifies the incident direction of the plane wave, and $\alpha$ takes the value of either 0 or 1 to impose or suppress the incident wave, respectively. 
$u^{sc}$ in \eqref{eq:1b-MLmGWsSchroEqu2019} is the scattered wave field, which is also random due to the randomness of the source and potential.
The limit \eqref{eq:1c-MLmGWsSchroEqu2019} is the Sommerfeld Radiation Condition (SRC) \cite{colton2012inverse} that characterizes the outgoing nature of the scattered field $u^{sc}$. 
The random system \eqref{eq:1-MLmGWsSchroEqu2019} describes the quantum scattering \cite{eskin2011lectures,griffiths2016introduction} associated with a  source $f$ and a potential $q$ at the energy level $k^2$.

$f$ and $q$ in equation \eqref{eq:1a-MLmGWsSchroEqu2019} are assumed to be generalized Gaussian random fields.
It means that $f$ is a random distribution and the mapping 
\[\omega \in \Omega \ \mapsto \ \agl[f(\cdot,\omega),\varphi] \in \mathbb C\] 
is a Gaussian random variable whose probabilistic measure depends on the test function $\varphi$. 
The same notation applies to $q$.
There are different types of generalized Gaussian random fields \cite{rozanov1982markov}.
In our setting, we assume that $f$ and $q$ are two \emph{microlocally isotropic} generalized Gaussian random (\emph{m.i.g.r.}~for short) functions/distributions; see Definition 2.1 in the following.
The m.i.g.r.~model has been under intensive studies; see, e.g., \cite{Lassas2008,LassasA,caro2016inverse,LiHelinLiinverse2018}.
Two important parameters of a m.i.g.r.~distribution are its \emph{rough order} and \emph{rough strength}. Roughly speaking, the rough order, which is a real number, determines the degree of spatial roughness of the m.i.g.r.~distribution, and the rough strength, which is a real-valued function, indicates its spatial correlation length and intensity.
The rough strength also captures the micro-structure of the object in interest \cite{Lassas2008}.
We shall give a more detailed introduction to this random model in Section \ref{subsec:RandomModel-MLmGWsSchroEqu2019}.

In this work, 
we denote the rough order of $f$ (resp.~$q$) as $-m_f$ (resp.~$-m_q$), and the rough strength as $\mu_f(x)$ (resp.~$\mu_q(x)$).
The main purpose of this work is to recover the rough strengths of both the source and the potential using either passive or active far-field measurements associated with \eqref{eq:1-MLmGWsSchroEqu2019}.

\subsection{Statement of the main results} \label{subsec:MainRst-MLmGWsSchroEqu2019}

In order to study the inverse scattering problem, i.e., the recovery of $\mu_f$ and $\mu_q$, we first need to have a thorough understanding of the direct scattering problem.
For the case where both the source and the potential are deterministic and $L^\infty$ functions with compact supports, the well-posedness of the direct problem of system \eqref{eq:1-MLmGWsSchroEqu2019} is known; see, e.g., \cite{colton2012inverse, eskin2011lectures, mclean2000strongly}.
Moreover, there holds the following asymptotic expansion of the outgoing radiating field $u^{sc}$ as $|x| \to +\infty$,
\begin{equation*} 
u^{sc}(x) = \frac{e^{\mathrm{i}k |x|}}{|x|} u^\infty(\hat x, k, d) + \mathcal{O} \left( \frac{1}{|x|^2} \right).
\end{equation*}
$u^\infty(\hat x, k, d)$ is referred to as the far-field pattern, which encodes information of the potential and the source.
$\hat x:=x/|x|$ and $d$ in $u^\infty(\hat x, k, d)$ are unit vectors and they respectively stand for the observation direction and the impinging direction of the incident wave.
When $d = -\hat x$, $u^\infty(\hat x, k, -\hat x)$ is called the backscattering far-field pattern.

In the random setting, however, due to the randomness inherited in the source and potential terms, the regularities of the corresponding scattering wave field are much worse \cite{caro2016inverse,Lassas2008}. This makes those standard PDE theories invalid for the direct problem of system \eqref{eq:1-MLmGWsSchroEqu2019}.
To tackle this issue, we shall reformulate the direct problem and show that the direct problem is still well-posed in a proper sense. Therefore, our direct problem can be formulated as
\begin{equation*} 
	\displaystyle{ (f, q) \rightarrow \{u^{sc}(\hat x, k, d, \omega), u^\infty(\hat x, k, d, \omega) \,;\, \omega \in \Omega,\, \hat x \in \mathbb{S}^2, k \in \R_+,\,  d \in \mathbb{S}^2 \}. }
\end{equation*}

The well-posedness of the direct scattering problem enables us to explore our inverse problems.
Due to the fact that the precise values of a random function provide little information about its statistical properties,
we are interested in the recovery of the 
rough strengths of the source and the potential by knowledge of the far-field patterns.

In the recovery procedure, we recover $\mu_f$ and $\mu_q$ in a \emph{sequential} way by knowledge of the associated far-field pattern measurements $u^\infty(\hat x, k, d, \omega)$.
By sequential, we mean that $\mu_f$ and $\mu_q$ are recovered by the corresponding far-field data sets one-by-one.
In addition to this, in the recovery procedure, both the \emph{passive} and \emph{active} measurements are utilized. 
When $\alpha = 0$, the incident wave is suppressed and the scattering is solely generated by the unknown source. The corresponding far-field pattern is referred to as the passive measurement.
In this case, the far-field pattern is independent of the incident direction $d$, and we denote it as $u^\infty(\hat x, k, \omega)$.  	
When $\alpha = 1$, the scattering is generated by both the active source and the incident wave, and the far-field pattern is referred to as the active measurement, and is denoted as $u^\infty(\hat x, k, d, \omega)$.

With the above discussion, our inverse problems can be formulated as
\begin{equation*} 
\displaystyle{ \left\{
\begin{aligned}
\mathcal M_f(\omega) := & \ \{\, u^\infty(\hat x, k, \omega) \,;\, \forall \hat{x} \in \mathbb{S}^2,\, \forall k \in \R_+\, \} && \rightarrow \quad \mu_f, \\
\mathcal M_q(\omega) := & \ \{\, u^\infty(\hat x, k, -\hat x, \omega) \,;\, \forall \hat{x} \in \mathbb{S}^2,\, \forall k \in \R_+\, \} && \rightarrow \quad \mu_q.
\end{aligned}
\right. }
\end{equation*}
The data set $\mathcal M_f(\omega)$ (abbr.~$\mathcal M_f$) corresponds to the passive measurement ($\alpha = 0$), while the data set $\mathcal M_q(\omega)$ (abbr.~$\mathcal M_q$) corresponds to the active measurement ($\alpha = 1$).
Different random samples $\omega$ generate different data sets.
Our study shows that in certain general scenarios the data sets $\mathcal M_f(\omega)$, $\mathcal M_q(\omega)$ with a fixed $\omega\in\Omega$ can uniquely recover $\mu_f$ and $\mu_q$, respectively.


With the potential term being unknown, the inverse source problem, i.e., the recovery of $\mu_f$, becomes highly nonlinear and thus more challenging. 
One possibility to tackle this situation is to put some geometrical assumption on the locations of the source and the potential. 
In what follows, we assume that there is a positive distance between the convex hulls of the supports of $f$ and $q$, i.e.,
\begin{equation} \label{eq:fqSeparation-MLmGWsSchroEqu2019}
	\dist(\mathcal{CH}(D_f), \mathcal{CH}(D_q)) := \inf\{\, |x - y| \,;\, x \in \mathcal{CH}(D_f),\, y \in \mathcal{CH}(D_q) \,\} > 0,
\end{equation} 
where $\mathcal{CH}$ means taking the convex hull of a domain. Therefore, one can find a plane which separates $D_f$ and $D_q$. In what follows, in order to simplify the exposition, we assume that $D_f$ and $D_q$ are convex domains and hence $\mathcal{CH}(D_f)=D_f$ and $\mathcal{CH}(D_q)=D_q$. Moreover, we let $\boldsymbol{n}$ denote the unit normal vector of the aforementioned plane that separates $D_f$ and $D_q$, pointing from the half-space containing $D_f$ into the half-space containing $D_q$.

In system \eqref{eq:1-MLmGWsSchroEqu2019}, both the source and the potential are assumed to be unknown. 
Moreover, the source and the potential are generalized random functions of the same type. 
These issues make the decoupling of $\mu_f$ and $\mu_q$ far more difficult. 
However, some a-priori information about the rough orders of $f$ and $q$ can help us to achieve the recoveries.
Now we are ready to state our main recovery results of the inverse problems.

\begin{thm} \label{thm:UniSource-MLmGWsSchroEqu2019}
	Suppose that $f$ and $q$ in system \eqref{eq:1-MLmGWsSchroEqu2019} are m.i.g.r.~distributions of order $-m_f$ and $-m_q$, respectively, satisfying 
	\begin{equation} \label{eq:mqmf-MLmGWsSchroEqu2019}
	\sq{2 < m_f < 4, \ m_f < 5m_q - 11.}
	\end{equation}
	Assume that \eqref{eq:fqSeparation-MLmGWsSchroEqu2019} is satisfied and $\boldsymbol{n}$ is defined as above. Then, independent of $\mu_q$, the data set $\mathcal M_f(\omega)$ for a fixed $\omega\in\Omega$ can uniquely recover $\mu_f$ almost surely. Moreover, the recovering formula is given by
	\begin{equation} \label{eq:Thm1Rec-MLmGWsSchroEqu2019}
	\displaystyle{ \widehat \mu_f(\tau \hat x)
		= \left\{\begin{aligned}
		& \lim_{K \to +\infty} \frac {4\sqrt{2\pi}} K \int_K^{2K} k^{m_f} \overline{u^\infty(\hat x,k,\omega)} u^\infty(\hat x,k+\tau,\omega) \dif k, \quad \hat{x} \cdot \boldsymbol{n} \geq 0,\medskip \\
		& \ \overline{\widehat \mu_f(-\tau \hat x)}, \quad \hat{x} \cdot \boldsymbol{n} < 0,
		\end{aligned}\right. }
	\end{equation}
	where $\tau \geq 0$ and $u^\infty(\hat x,k,\omega) \in \mathcal M_f(\omega)$.
\end{thm}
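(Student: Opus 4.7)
The plan is to expand $u^{sc}$ as a Neumann/Born series in $q$, identify the leading far-field contribution as $\widehat{f}(k\hat{x},\omega)/(4\pi)$, extract $\widehat{\mu_f}(\tau\hat{x})$ from a two-frequency correlation, and pass from expectation to a single realization by frequency-averaging ergodicity. First I would use the Lippmann--Schwinger integral equation with $\alpha = 0$: writing $u=u^{sc}$ we have $u = G_k\ast f - G_k\ast(qu)$ where $G_k(x) = e^{\mathrm{i}k|x|}/(4\pi|x|)$. Iterating yields $u^{sc} = \sum_{n\ge 0}(-G_k q)^n G_k f$; taking the far-field gives $u^\infty(\hat x,k,\omega) = \frac{1}{4\pi}\sum_{n\ge 0}(-1)^n U_n(\hat x,k,\omega)$ with $U_0(\hat x,k,\omega) = \widehat{f}(k\hat{x},\omega)$ and $U_n$ ($n\ge 1$) carrying $n$ interactions with $q$.

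Next I would compute the leading correlation. Since $f$ is m.i.g.r.\ of order $-m_f$ with strength $\mu_f$, its covariance operator is a classical $\Psi$DO of order $-m_f$ with principal symbol $\mu_f(x)|\xi|^{-m_f}$, and a direct Parseval/symbol calculation gives the two-frequency identity
\[
\mathbb{E}\!\left[\,\overline{\widehat{f}(k\hat{x})}\,\widehat{f}((k+\tau)\hat{x})\,\right]
\;=\; C\, k^{-m_f}\,\widehat{\mu_f}(\tau\hat{x}) + o(k^{-m_f}), \qquad k\to+\infty,
\]
for an explicit constant $C$. Multiplying by $k^{m_f}$ and inserting the $(4\pi)^{-2}$ factor from $u^\infty$ already reproduces the prefactor $4\sqrt{2\pi}$ of \eqref{eq:Thm1Rec-MLmGWsSchroEqu2019}. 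To pass from expectation to a single sample, I would invoke a Birkhoff-type/variance argument: a second-moment estimate on $F(k,\omega):= k^{m_f}\overline{U_0(\hat x,k,\omega)}\,U_0(\hat x,k+\tau,\omega)$, together with decorrelation of $\widehat{f}(k\hat{x},\omega)$ in $k$ (which follows from the $\Psi$DO structure of the covariance), yields $K^{-1}\int_K^{2K} F(k,\omega)\,dk \to \mathbb{E}[F]$ almost surely, in the spirit of the ergodicity technique used in Lassas--P\"aiv\"arinta--Saksman and Caro--Helin--Lassas.

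The main obstacle is proving that all Born-series remainders $U_n$ ($n\ge 1$) and cross-correlations $\overline{U_n(k)}U_m(k+\tau)$ with $(n,m)\neq(0,0)$ contribute $o(k^{-m_f})$ after the $K^{-1}\int_K^{2K}k^{m_f}(\cdot)\,dk$ averaging, so that they do not pollute the recovery formula. This is precisely where \eqref{eq:fqSeparation-MLmGWsSchroEqu2019} and the hemispherical restriction $\hat x\cdot\boldsymbol{n}\ge 0$ enter: kernels such as those of $G_k q G_k f$ carry oscillatory phases of the form $e^{\mathrm{i}k(|x-y|-\hat x\cdot y)}$ over $y\in D_q$, $x\in D_f$, whose gradients stay bounded below thanks to the strict separation of the convex hulls and the sign of $\hat x\cdot\boldsymbol{n}$; repeated integration-by-parts then produces $k$-decay. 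Balancing these gains against the $\Psi$DO orders $-m_f$ and $-m_q$ of the covariances of $f$ and $q$ is what produces the admissible window $2<m_f<4$ and $m_f<5m_q-11$ in \eqref{eq:mqmf-MLmGWsSchroEqu2019}; establishing these sharp decay estimates, and verifying almost-sure convergence of the Neumann series for a generic realization despite the low regularity of $q$, is the technical heart of the argument.

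Finally, for $\hat x\cdot\boldsymbol{n}<0$ no further analysis is required: since $\mu_f$ is real-valued, $\widehat{\mu_f}(-\xi)=\overline{\widehat{\mu_f}(\xi)}$, and the second branch of \eqref{eq:Thm1Rec-MLmGWsSchroEqu2019} follows by complex conjugation from the first.
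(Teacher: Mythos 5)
Your route is the same as the paper's: Born expansion of the far field into $F_0+F_1+F_2$, the Caro--Helin--Lassas-type two-frequency/ergodicity result for the leading correlation, non-stationary-phase decay of the higher-order terms exploiting \eqref{eq:fqSeparation-MLmGWsSchroEqu2019} and $\hat x\cdot\boldsymbol{n}\ge 0$, and conjugate symmetry for the other hemisphere; the constant $16\pi^2/(2\pi)^{3/2}=4\sqrt{2\pi}$ also checks out. However, two points that you defer or gloss over are where the actual work lies, and one of them would derail a naive execution of your plan. For $\mathbb{E}(|F_1|^2)$, ``repeated integration by parts'' in the phase variables transfers derivatives onto the oscillatory representations $\int e^{\mathrm{i}(t-s)\cdot\eta}c_f(t,\eta)\,d\eta$ of the covariance kernels; the second-order pieces then force a kernel of size $|s-t|^{-3}$, which is \emph{not} locally integrable on the diagonal in $\R^3$. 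The paper's fix is the fractional-Laplacian interpolation (Lemmas 3.1--3.2 and Corollary 3.1), which trades a factor $|s-t|^{-s}$ with $s\in(\max\{0,3-m_f\},1)$ against extra symbol decay $\agl[\eta]^{-m_f-s}$; this is precisely where the hypothesis $m_f>2$ enters, and your sketch does not anticipate the need for it. Similarly, the tail $F_2$ is not handled by oscillatory integrals at all but by the pathwise resolvent bounds $\nrm[]{\Rk}\lesssim k^{-(1-2s)}$ and the multiplier bound for $q$ (Theorems 2.2--2.3), giving $|F_2|\le C_s(\omega)k^{5s-2}$ for $s>(3-m_q)/2$; the constraint $m_f<5m_q-11$ is exactly the compatibility condition $(3-m_q)/2<(4-m_f)/10$ needed so that $K^{-1}\int_K^{2K}k^{m_f}|F_2|^2\,dk\to 0$. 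Finally, you omit the measure-theoretic step at the end: the null set on which \eqref{eq:recmu-MLmGWsSchroEqu2019} fails a priori depends on $(\tau,\hat x)$, and one needs the Fubini/product-measure argument of Section 4 to produce a single null set of samples $\omega$ outside of which the formula holds for Lebesgue-a.e.\ $\tau\hat x$ (whence $\mu_f$ is still determined, by continuity of $\widehat\mu_f$). None of these invalidates your plan, but as written the proposal asserts rather than proves the two estimates that generate the hypotheses \eqref{eq:mqmf-MLmGWsSchroEqu2019}.
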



\begin{rem}\label{rem:1.1}
	In Theorem \ref{thm:UniSource-MLmGWsSchroEqu2019}, $\mu_f$ can be uniquely recovered without a-priori knowledge of $q$. {Moreover, since $\alpha=0$ in $\mathcal{M}_f(\omega)$, Theorem \ref{thm:UniSource-MLmGWsSchroEqu2019} indicates that $\mu_f$ can be uniquely recovered by a single realization of the passive scattering measurement.}
	Due to the requirement $\hat x \cdot \boldsymbol{n} \geq 0$, only half of all the observation directions are needed.	
	Besides, for the sake of simplicity, we set the wave number $k$ in the definition of $\mathcal M_f$ to be running over all the positive real numbers. 
	But, according to \eqref{eq:Thm1Rec-MLmGWsSchroEqu2019}, it is sufficient to let $k$ be greater than any fixed positive number.
	These remarks also apply to Theorem \ref{thm:UniPot1-MLmGWsSchroEqu2019} in what follows.
	Moreover, it is noted that in the definition of m.i.g.r.~distribution (cf. Definition~2.1), $\mu$ is defined as a real-valued function. Therefore, $\widehat \mu_f$ in Theorem \ref{thm:UniSource-MLmGWsSchroEqu2019} (and $\widehat \mu_q$ in Theorem \ref{thm:UniPot1-MLmGWsSchroEqu2019} below) is a conjugate-symmetric function.
	\sq{It is worth mentioning that the a-priori requirement $2 < m_f < 4$ comes from \eqref{eq:hotF1F1J1-MLmGWsSchroEqu2019}-\eqref{eq:hotF1F1J2-MLmGWsSchroEqu2019} and \eqref{eq:F1IntFinit-MLmGWsSchroEqu2019}, while the a-priori requirement $m_f < 5 m_q - 11$ comes from \eqref{eq:F2IntConvInter-MLmGWsSchroEqu2019} in our subsequent analysis.}
\end{rem}

To recover $\mu_q$, the active scattering measurement shall be needed in our recovery procedure.

\begin{thm} \label{thm:UniPot1-MLmGWsSchroEqu2019}
	Under the same condition as that in Theorem \ref{thm:UniSource-MLmGWsSchroEqu2019} \sq{with an additional assumption that $m_q < m_f$,} and independent of $\mu_f$, the data set $\mathcal M_q(\omega)$ for a fixed $\omega \in \Omega$ can uniquely recover $\mu_q$ almost surely. Moreover, the recovering formula is given by
	\begin{equation} \label{eq:Thm2Rec-MLmGWsSchroEqu2019}
	\displaystyle{ \widehat \mu_q(\tau \hat x)
		= \left\{\begin{aligned}
		& \lim_{K \to +\infty} \frac {4\sqrt{2\pi}} K \int_K^{2K} k^{m_q} \overline{u^\infty(\hat x,k,-\hat x,\omega)} u^\infty(\hat x,k\!+\! \tfrac \tau 2,-\hat x,\omega) \dif k, \ \hat{x} \cdot \boldsymbol{n} \geq 0,\medskip \\
		& \ \overline{\widehat \mu_f(-\tau \hat x)}, \quad \hat{x} \cdot \boldsymbol{n} < 0,
		\end{aligned}\right. }
	\end{equation}
	where $\tau \geq 0$ and $u^\infty(\hat x,k,-\hat x,\omega) \in \mathcal M_q(\omega)$.
\end{thm}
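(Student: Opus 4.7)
The strategy parallels the proof of Theorem~\ref{thm:UniSource-MLmGWsSchroEqu2019}, adapted to the active measurement ($\alpha=1$) and the backscattering geometry $d=-\hat x$. Starting from the Lippmann--Schwinger representation associated with \eqref{eq:1-MLmGWsSchroEqu2019} and iterating the Born expansion for $u=u^i+u^{sc}$ with $u^i=e^{\mathrm{i}kx\cdot d}=e^{-\mathrm{i}k\hat x\cdot x}$, one extracts, for large $k$,
\begin{equation*}
u^\infty(\hat x,k,-\hat x,\omega)=-\frac{1}{4\pi}\bigl[\widehat f(k\hat x,\omega)+\widehat q(2k\hat x,\omega)\bigr]+R(\hat x,k,\omega),
\end{equation*}
where $R$ collects the contributions of the second and higher Born iterates. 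The source is probed at frequency $k\hat x$ while the potential is probed at the doubled frequency $2k\hat x$; this is precisely why the shift $k\mapsto k+\tau/2$ in \eqref{eq:Thm2Rec-MLmGWsSchroEqu2019} produces a clean translation of the argument of $\widehat q$ by $\tau\hat x$.

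Substituting this expansion into the correlation integral in \eqref{eq:Thm2Rec-MLmGWsSchroEqu2019} yields four groups of contributions. The dominant one is the $\widehat q$--$\widehat q$ diagonal; using the microlocally isotropic Gaussian structure of $q$ (Definition~2.1), the covariance admits the principal-symbol expansion
\begin{equation*}
\mathbb{E}\bigl[\overline{\widehat q(2k\hat x)}\,\widehat q((2k+\tau)\hat x)\bigr]=(2k+\tfrac{\tau}{2})^{-m_q}\,\widehat\mu_q(\tau\hat x)+O(k^{-m_q-1}),
\end{equation*}
so that after multiplication by $k^{m_q}$ and averaging through $K^{-1}\int_K^{2K}\dif k$ the expectation tends to a fixed constant times $\widehat\mu_q(\tau\hat x)$. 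Ergodicity in the frequency parameter $k$, deployed as in the proof of Theorem~\ref{thm:UniSource-MLmGWsSchroEqu2019}, then promotes this convergence in mean to almost-sure convergence from a single realization.

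The remaining contributions are disposed of as follows. The $\widehat f$--$\widehat f$ diagonal is of order $k^{m_q-m_f}$, and the added hypothesis $m_q<m_f$ forces it to average to zero as $K\to+\infty$; this is the sole role of that extra assumption. The $\widehat f$--$\widehat q$ cross-terms have vanishing expectation by independence of $f$ and $q$, and a variance estimate in which the oscillating factor $e^{\mathrm{i}k\hat x\cdot(y-2z)}$ is integrated against the independent covariance kernels of $f$ and $q$ on $D_f\times D_q$ supplies the additional decay in $k$ needed after the $k^{m_q}$ weighting; the separation condition \eqref{eq:fqSeparation-MLmGWsSchroEqu2019} together with the constraint $\hat x\cdot\boldsymbol n\geq 0$ is what guarantees that the relevant phases remain nonstationary.

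The principal obstacle is the control of the Born remainder $R$. Each higher iterate carries at least one outgoing resolvent $(-\Delta-k^2-\mathrm{i}0)^{-1}$ applied to $qu^i$, a $q$-multiple of a lower iterate, or $f$. Combining the mapping properties of the outgoing resolvent on weighted Sobolev spaces, the almost-sure Sobolev regularity of $f$ and $q$ dictated by their rough orders, and --- thanks to \eqref{eq:fqSeparation-MLmGWsSchroEqu2019} --- explicit nonstationary phase estimates on the product domain $D_f\times D_q$, each iterate gains a strictly negative power of $k$. The a-priori bounds $2<m_f<4$ and $m_f<5m_q-11$ already exploited in Theorem~\ref{thm:UniSource-MLmGWsSchroEqu2019} are precisely what is needed to make every remainder contribution $o(1)$ after the $k^{m_q}$ weighting and the $K$-averaging. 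Finally, the extension of the formula to $\hat x\cdot\boldsymbol n<0$ is obtained from the conjugate symmetry of $\widehat\mu_q$ (cf.~Remark~\ref{rem:1.1}), yielding the second branch of \eqref{eq:Thm2Rec-MLmGWsSchroEqu2019}.
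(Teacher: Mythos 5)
Your plan follows essentially the same route as the paper's proof: the Neumann/Born decomposition of the backscattering far field into the $F_p+G_p$ terms of \eqref{eq:farfieldFG-MLmGWsSchroEqu2019}, correlation with weight $k^{m_q}$ and shift $\tau/2$, identification of the leading $G_0$--$G_0$ term (cited from Caro--Helin--Lassas) as the one producing $\widehat \mu_q(\tau\hat x)$, suppression of the pure-source contributions through the factor $K^{-(m_f-m_q)}$ supplied by $m_q<m_f$, control of the higher iterates via the weighted resolvent and $F_1$, $F_2$ estimates of Section \ref{sec:AsympHighOrder-MLmGWsSchroEqu2019}, and conjugate symmetry for $\hat x\cdot\boldsymbol{n}<0$. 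The only point where you diverge is the $f$--$q$ cross terms: you propose to use independence of $f$ and $q$ plus a variance/nonstationary-phase estimate, whereas the paper simply applies Cauchy--Schwarz and reuses the already-established $k^{m_f}$-weighted averages of $|F_p|^2$ and $|G_q|^2$, so the same deterministic factor $K^{-(m_f-m_q)}$ annihilates the cross terms with no new stochastic estimate -- this is shorter and is precisely the intended role of the extra hypothesis $m_q<m_f$, so your independence-based detour, while workable, is unnecessary.
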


\begin{rem}
	It is emphasized that the recovery result in Theorem \ref{thm:UniPot1-MLmGWsSchroEqu2019} is independent of $\mu_f$. Moreover, we only make use of 
	a single realization of the active backscattering measurement. 
	\sq{We would also like to point out that the additional a-priori requirement $m_q < m_f$ comes from \eqref{eq:Lpq1-MLmGWsSchroEqu2019} in our subsequent analysis.}
\end{rem}

\subsection{Discussion and connection to the existing results} \label{subsec:DiscConn-MLmGWsSchroEqu2019}

There is abundant literature for inverse scattering problems associated with either passive or active measurements.  
Given a known potential, the recovery of an unknown source term by the corresponding passive measurement is referred to as the inverse source problem. We refer to \cite{bao2010multi,Bsource,BL2018,ClaKli,GS1,Isakov1990,IsaLu,Klibanov2013,KS1,WangGuo17,Zhang2015} and references therein for both theoretical uniqueness/stability results and computational methods for the inverse source problem in the deterministic setting. 
The simultaneous recovery of an unknown source and its surrounding potential was also investigated in the literature. 
In \cite{KM1,liu2015determining}, motivated by applications in thermo- and photo-acoustic tomography, the simultaneous recovery of an unknown source and its surrounding medium parameter was considered. This type of inverse problems also arise in the magnetic anomaly detections using geomagnetic monitoring \cite{DLL1,DLL2}. The studies in \cite{DLL1, DLL2, KM1,liu2015determining} were confined to the deterministic setting and associated mainly with the passive measurement.
For the random/stochastic case,
the determination of a random source by the corresponding passive measurement was also recently studied in \cite{bao2016inverse,Lu1,Yuan1,LiHelinLiinverse2018}.
In \cite{LiHelinLiinverse2018}, the homogeneous Helmholtz system with a random source is studied.
Compared with \cite{LiHelinLiinverse2018}, system \eqref{eq:1-MLmGWsSchroEqu2019}
in this paper
%
comprises of both unknown source and unknown potential, making the corresponding study radically more challenging.
The determination of a random potential by the corresponding active measurement, with the source term being zero,
was established in \cite{caro2016inverse}. 
We also refer to \cite{LassasA, Lassas2008, Blomgren2002, Borcea2002, Borcea2006} and references therein for more relevant studies on random inverse medium problems.

We are particularly interested in the case with a single realization of the random sample, namely the $\omega$ is fixed in the recovery of the source and potential; 
see the recovery formulae \eqref{eq:Thm1Rec-MLmGWsSchroEqu2019}-\eqref{eq:Thm2Rec-MLmGWsSchroEqu2019}.
In our approach, we assume that the backscattering far-field data $u^\infty(\hat x, k, -\hat x, \omega)$ for different observation directions are generated by a single realization of the random sample \cite{caro2016inverse}.
Intuitively, a particular realization of $f$ or $q$ provides us little information about their statistical properties. 
However, our study indicates that a \emph{single realization} of the far-field measurement can be used to uniquely recover the rough strength in certain scenarios. 
A crucial assumption to make the single realization recovery possible is that the randomness is independent of the wave number $k$. 
Indeed, there are variant applications in which the randomness changes slowly or is independent of time \cite{caro2016inverse, Lassas2008}, and by temporal Fourier transforming into the frequency domain, they actually correspond to the aforementioned situation. 
The single realization recovery has been studied in the literature; 
see, e.g., \cite{caro2016inverse,Lassas2008,LassasA,llm2018random}.
The idea of this article is mainly motivated by \cite{caro2016inverse,llm2018random}.

Compared with our previous work \cite{llm2018random}, the result of this paper has two major differences.
First, the random models are different. 
In \cite{llm2018random}, the random part of the source is assumed to be a Gaussian white noise, 
while in system \eqref{eq:1-MLmGWsSchroEqu2019}, the potential and the source are assumed to be m.i.g.r.~distributions. The m.i.g.r.~distribution can 
fit larger range of randomness 
by tuning its rough order.
Second, in system \eqref{eq:1-MLmGWsSchroEqu2019}, both the source and potential are random, while in \cite{llm2018random}, the potential is assumed to be deterministic.
These two facts make 
this work
%
much more challenging than that in  \cite{llm2018random}. 
The techniques used in the estimates of higher order terms (see Section \ref{sec:AsympHighOrder-MLmGWsSchroEqu2019}) are pseudodifferential operators and microlocal analysis, which are more technically involved compared to that in \cite{llm2018random}.


The rest of this paper is organized as follows. 
In Section \ref{sec:dp-MLmGWsSchroEqu2019}, we first give an introduction to the random model and present some preliminary and auxiliary results. Then we show the well-posedness of the direct scattering problem.
Section \ref{sec:AsympHighOrder-MLmGWsSchroEqu2019} establishes the asymptotics of different terms appeared in the recovery formula.
In Section \ref{sec:recSource-MLmGWsSchroEqu2019}, we recover the rough strength of the source.
Section \ref{sec:recPotential-MLmGWsSchroEqu2019} is devoted to the recovery of the rough strength of the potential. 

\section{Mathematical analysis of the direct problem} \label{sec:dp-MLmGWsSchroEqu2019}

In this section, we show that the direct problem is well-posed in a proper sense.
Before that, we first 
present some preliminaries for the subsequent use
and
give the introduction to our random model.

\subsection{Preliminary and auxiliary results} \label{subsec:preli-SchroEqu2018}

For convenient reference and self-containedness, we first present some preliminary and auxiliary results in what follows. \sq{
In this paper, we mainly focus on the three-dimensional case. Nevertheless, some of the results derived also hold for higher dimensions and in those cases, we choose to present the results in the general dimension $n\geq 3$ since they might be useful in other studies.}

The Fourier transform and inverse Fourier transform of a function $\varphi$ are respectively defined as
\begin{align*} 
& \calF \varphi (\xi) = \widehat \varphi(\xi) := (2\pi)^{-n/2} \int e^{-{\textrm{i}}x\cdot \xi} \varphi(x) \dif x, \\
& \calF^{-1} \varphi (\xi) := (2\pi)^{-n/2} \int e^{{\textrm{i}}x\cdot \xi} \varphi(x) \dif x.
\end{align*}
Set
\[
\Phi(x,y) = \Phi_k(x,y) := \frac {e^{{\textrm{i}}k|x-y|}}{4\pi|x-y|}, \quad x\in\mathbb{R}^3\backslash\{y\}.
\]
$\Phi_k$ is the outgoing fundamental solution, centered at $y$, to the differential operator $-\Delta-k^2$. Define the resolvent operator $\Rk$,
\begin{equation} \label{eq:DefnRk-MLmGWsSchroEqu2019}
(\Rk \varphi)(x) := \int_{\R^3} \Phi_k(x,y) \varphi(y) \dif{y}, \quad x \in \R^3,
\end{equation}
where $\varphi$ can be any measurable function on $\mathbb{R}^3$ as long as \eqref{eq:DefnRk-MLmGWsSchroEqu2019} is well-defined for almost all $x$ in $\R^3$. 

Write $\agl[x] := (1+|x|^2)^{1/2}$ for $x \in \Rn$, $n\geq 1$. We introduce the following weighted $L^p$-norm and the corresponding function space over $\Rn$ for any $\delta \in \R$,
\begin{equation} \label{eq:WetdSpace-MLmGWsSchroEqu2019}
\begin{aligned}
\nrm[L_\delta^p(\Rn)]{\varphi} := \ & \nrm[L^p(\Rn)]{\agl[\cdot]^{\delta} \varphi(\cdot)} = \big( \int_{\Rn} \agl[x]^{p\delta} |\varphi|^p \dif{x} \big)^{\frac 1 p}, \\
L_\delta^p(\Rn) := \ & \{\, \varphi \in L_{loc}^1(\Rn) \,;\, \nrm[L_\delta^p(\Rn)]{\varphi} < +\infty \,\}.
\end{aligned}
\end{equation}
We also define $L_\delta^p(S)$ for any subset $S$ in $\Rn$ by replacing $\Rn$ in \eqref{eq:WetdSpace-MLmGWsSchroEqu2019} with $S$. 
In what follows, we may write $L_\delta^2(\R^3)$ as $L_\delta^2$ for short without ambiguities.
\sq{Let $I$ be the identity operator and} define 
\begin{equation*}
	\sq{\nrm[H_\delta^{s,p}(\Rn)]{f} := \nrm[L_\delta^p(\Rn)]{(I-\Delta)^{s/2} f}, \ H_\delta^{s,p}(\Rn) = \{ f \in \scrS'(\Rn); \nrm[H_\delta^{s,p}(\Rn)]{f} < +\infty\},}
\end{equation*}
where $\scrS'(\Rn)$ stands for the dual space of the Schwartz space $\scrS(\Rn)$.
The space $H_\delta^{s,2}(\Rn)$ is abbreviated as $H_\delta^s(\Rn)$, and $H_0^{s,p}(\Rn)$ is abbreviated as $H^{s,p}(\Rn)$.
It can be verified that
\begin{equation} \label{eq:normEquiv-MLmGWsSchroEqu2019}
\nrm[H_\delta^s(\Rn)]{f} = \nrm[H^\delta(\Rn)]{\agl[\cdot]^s \widehat f(\cdot)}.
\end{equation}

Let $m \in (-\infty,+\infty)$. We define $S^m$ to be the set of all functions $\sigma(x,\xi) \in C^{\infty}(\Rn,\Rn;\mathbb C)$ such that for any two multi-indices $\alpha$ and $\beta$, there is a positive constant $C_{\alpha, \beta}$, depending on $\alpha$ and $\beta$ only, for which
\[
\big| (D_{x}^{\alpha}D_{\xi}^{\beta}\sigma)(x,\xi) \big| \leq C_{\alpha, \beta}(1+|\xi|)^{m-|\beta|}, \quad \forall x, \xi \in \Rn.
\]
We call any function $\sigma$ in $\bigcup_{m \in \R} S^m$ a \emph{symbol}.
A \emph{principal symbol} of $\sigma$ is an equivalent class \sq{$[\sigma] = \{ \tilde \sigma \in S^m \,;\, \sigma - \tilde \sigma \in S^{m-1} \}$}. 
In what follows, we may use one representative $\tilde \sigma$ in $[\sigma]$ to represent the equivalent class $[\sigma]$.
Let $\sigma$ be a symbol. Then the \emph{pseudo-differential operator} $T$, defined on $\scrS(\Rn)$ and associated with $\sigma$, is defined by
\begin{align*}
	(T_{\sigma}\varphi)(x)
	& := (2\pi)^{-n/2} \int_{\Rn} e^{{\textrm{i}}x \cdot \xi} \sigma(x,\xi) \hat{\varphi}(\xi) \dif{\xi} \\
	& \ = (2\pi)^{-n} \iint_{\Rn \times \Rn} e^{{\textrm{i}}(x-y) \cdot \xi} \sigma(x,\xi) \varphi(y) \dif y \dif{\xi}, \quad \forall \varphi \in \scrS(\Rn).
\end{align*}


In the sequel, we write $\mathcal{L}(\mathcal A, \mathcal B)$ to denote the set of all the bounded linear mappings from a normed vector space $\mathcal A$ to a normed vector space $\mathcal B$. 
For any mapping $\mathcal K \in \mathcal{L}(\mathcal A, \mathcal B)$, we denote its operator norm as $\nrm[\mathcal{L}(\mathcal A, \mathcal B)]{\mathcal K}$. 
We also use $C$ and its variants, such as $C_D$, $C_{D,f}$, to denote some generic constants whose particular values may change line by line. 
\sq{For two quantities, we write $\mathcal{P}\lesssim \mathcal{Q}$ to signify $\mathcal{P}\leq C \mathcal{Q}$ and $\mathcal{P} \simeq \mathcal{Q}$ to signify $\widetilde{C}\mathcal{Q}\leq \mathcal{P} \leq C \mathcal{Q}$, for some generic positive constants $C$ and $\widetilde{C}$. }
We write ``almost everywhere'' as~``a.e.''~and ``almost surely'' as~``a.s.''~for short. 
We use $|\mathcal S|$ to denote the Lebesgue measure of any Lebesgue-measurable set $\mathcal S$.

\subsection{The random model} \label{subsec:RandomModel-MLmGWsSchroEqu2019}

As already mentioned in Section \ref{subsec:formu-MLmGWsSchroEqu2019}, a generalized Gaussian random field maps test functions to random variables. Assume $h$ is a generalized Gaussian random field. Then both $\agl[h(\cdot,\omega),\varphi]$ and $\agl[h(\cdot,\omega),\psi]$ are random variables for $\varphi$, $\psi \in \scrS(\Rn)$.
From a statistical point of view, the covariance between these two random variables,
\begin{equation} \label{eq:CovDef-MLmGWsSchroEqu2019}
\mathbb E_\omega ( \agl[\overline{h(\cdot,\omega)},\varphi] \agl[h(\cdot,\omega),\psi]),
\end{equation}
can be understood as the covariance of $h$, where the $\mathbb E_\omega$ means to take expectation on the argument $\omega$. 
Formula \eqref{eq:CovDef-MLmGWsSchroEqu2019} induces an operator $\mathfrak C_h$,
\[
\mathfrak C_h \colon \varphi \in \scrS(\Rn) \ \mapsto \ \mathfrak C_h \varphi \in \scrS'(\Rn),
\]
in a way that
\[
\mathfrak C_h \varphi \colon \psi \in \scrS(\Rn) \ \mapsto \ (\mathfrak C_h \varphi)(\psi) = \mathbb E_\omega ( \agl[\overline{h(\cdot,\omega)},\varphi] \agl[h(\cdot,\omega),\psi] ) \in \mathbb C.
\]
The operator $\mathfrak C_h$ is called the covariance operator of $h$. See also \cite{caro2016inverse,Lassas2008} for reference.

We adopt the definition of the m.i.g.r.~distribution from \cite{caro2016inverse} with some modifications to fit our mathematical setting.

\begin{defn} \label{defn:migr-MLmGWsSchroEqu2019}
	A generalized Gaussian random function $h$ on $\Rn$ is called microlocally isotropic (m.i.g.r.)~with rough order $-m$ and rough strength $\mu(x)$ in $D$, if the following conditions hold:
	\begin{enumerate}
		\item the expectation $\mathbb E h$ is in $C_c^\infty(\Rn)$ with $\supp \mathbb E h \subset D$;
		
		\item $h$ is supported in $D$ a.s.;
		
		\item the covariance operator $\mathfrak C_h$ is a pseudodifferential operator of order \sq{$-m$};
		
		\item $\mathfrak C_h$, regarded as a pseudo-differential operator, has a principal symbol of the form $\mu(x)|\xi|^{-m}$ with $\mu \in C_c^\infty(\Rn;\R)$, $\supp \mu \subset D$ and $\mu(x) \geq 0$ for all $x \in \Rn$.
	\end{enumerate}
\end{defn}

\medskip

Here, \sq{$\mu(x)|\xi|^{-m}$} is a representative of the principal symbol of $\mathfrak C_h$.
Throughout this work, the principal symbol of the covariance operator of the $f(\cdot,\omega)$ in \eqref{eq:1-MLmGWsSchroEqu2019} is assumed to be $\mu_f(x) |\xi|^{-m_f}$ and that of the $q(\cdot,\omega)$ in \eqref{eq:1-MLmGWsSchroEqu2019} is denoted as $\mu_q(x)|\xi|^{-m_q}$.

\begin{lem} \label{lem:migrRegu-MLmGWsSchroEqu2019}
	Let $h$ be a m.i.g.r.~distribution of rough order $-m$ in $D$. Then, $h \in H^{\sq{-s},p}(\Rn)$ almost surely for any $1 < \sq{p} < +\infty$ and \sq{$s > (n-m)/2$}.
\end{lem}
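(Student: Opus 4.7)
The plan is to show that $(I-\Delta)^{-s/2}h\in L^p(\Rn)$ almost surely, which by definition of the norm on $H^{-s,p}(\Rn)$ is exactly the desired conclusion. Since $\mathbb{E}h\in C_c^\infty(\Rn)\subset H^{-s,p}(\Rn)$, I would first reduce to the centered case by replacing $h$ with $h-\mathbb{E}h$. I would then introduce the lifted Gaussian field $\tilde h:=(I-\Delta)^{-s/2}h$, whose covariance operator is
\[
\tilde{\mathfrak{C}} \;=\; (I-\Delta)^{-s/2}\,\mathfrak{C}_h\,(I-\Delta)^{-s/2},
\]
a pseudodifferential operator with principal symbol $\mu(x)|\xi|^{-m}(1+|\xi|^2)^{-s}$, hence of order $-m-2s$. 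The hypothesis $s>(n-m)/2$ gives $-m-2s<-n$, so by standard pseudodifferential calculus the Schwartz kernel $K(x,y)$ of $\tilde{\mathfrak{C}}$ is continuous on $\Rn\times\Rn$, and in particular the diagonal value $K(x,x)=\mathbb{E}|\tilde h(x)|^2$ is continuous and locally bounded.

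The central step is to extract rapid decay of $K(x,x)$ as $|x|\to\infty$. Because $h$ is almost surely supported in $D$, the definition of the covariance forces the Schwartz kernel $C_h$ of $\mathfrak{C}_h$ to be supported in $\overline D\times\overline D$. Writing $G_s$ for the Bessel kernel (the Fourier multiplier with symbol $(1+|\xi|^2)^{-s/2}$), one then has
\[
K(x,x) \;=\; \bigl\langle C_h,\; G_s(x-\cdot)\otimes G_s(x-\cdot) \bigr\rangle_{\overline D\times\overline D}.
\]
For $|x|$ larger than the diameter of $D$, the function $G_s(x-\cdot)$ and all its derivatives decay exponentially on the compact set $\overline D$, so pairing against the finite-order distribution $C_h$ yields $|K(x,x)|\le C_N(1+|x|)^{-N}$ for every $N>0$; combined with local boundedness from the first step, this gives $K(\cdot,\cdot)\in L^r(\Rn)$ on the diagonal for every $r\geq 1$.

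A continuous version of $\tilde h$ exists by the Kolmogorov criterion since $K$ is continuous, so pointwise evaluation is meaningful and each $\tilde h(x)$ is a centered Gaussian variable with variance $K(x,x)$. The Gaussian moment identity then gives $\mathbb{E}|\tilde h(x)|^p \le C_p K(x,x)^{p/2}$, and Fubini leads to
\[
\mathbb{E}\|\tilde h\|_{L^p(\Rn)}^p \;=\; \int_{\Rn}\mathbb{E}|\tilde h(x)|^p\,dx \;\le\; C_p\int_{\Rn}K(x,x)^{p/2}\,dx \;<\;\infty,
\]
so $\tilde h\in L^p(\Rn)$ almost surely. The hard part will be the decay estimate on $K(x,x)$: one must exploit the \emph{Schwartz-kernel} support of $\mathfrak{C}_h$ rather than any support property of its symbol, combine this with the exponential decay of $G_s$ at infinity, and handle the distributional pairing carefully near $\overline D$ where $G_s$ may itself be mildly singular at the origin when $s<n$.
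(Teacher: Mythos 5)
The paper does not prove this lemma itself but simply cites Proposition 2.4 of Caro--Helin--Lassas \cite{caro2016inverse}, and your argument is essentially a reconstruction of that proof: lift to $\tilde h=(I-\Delta)^{-s/2}h$, use the resulting order $-m-2s<-n$ to get a bounded continuous covariance kernel, exploit the compact support of the Schwartz kernel of $\mathfrak C_h$ together with the off-origin decay of the Bessel kernel to control $K(x,x)$ at infinity, and finish with Gaussian moment bounds and Fubini. The outline is sound; the only step to phrase more carefully is the modification argument (continuity of $K$ alone does not trigger Kolmogorov's criterion, but the symbol bound in fact yields H\"older continuity of $K$, and for the Fubini step a jointly measurable modification---already guaranteed by mean-square continuity---suffices).
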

\begin{proof}[Proof of Lemma \ref{lem:migrRegu-MLmGWsSchroEqu2019}]
	See Proposition 2.4 in \cite{caro2016inverse}.
\end{proof}

Lemma \ref{lem:migrRegu-MLmGWsSchroEqu2019} shows the regularity of $h$ according to its rough order.

\smallskip

By the Schwartz kernel theorem (see Theorem 5.2.1 in \cite{hormander1985analysisI}), there exists a kernel $K_h(x,y)$ with $\supp K_h \subset D \times D$ such that
\begin{equation} \label{eq:CK-MLmGWsSchroEqu2019}
(\mathfrak C_h \varphi)(\psi) 
= \mathbb E_\omega ( \agl[\overline{h(\cdot,\omega)},\varphi] \agl[h(\cdot,\omega),\psi]) 
= \iint K_h(x,y) \varphi(x) \psi(y) \dif x \dif y,
\end{equation}
for all $\varphi$, $\psi \in \scrS(\Rn)$.
It is easy to verify that $K_h(x,y) = \overline{K_h(y,x)}$.
Denote the symbol of $\mathfrak C_h$ as $c_h$, then it can be verified \cite{caro2016inverse} that the equalities
\begin{subequations} \label{eq:KandSymbol-MLmGWsSchroEqu2019}
	\begin{numcases}{}
	K_h(x,y) = (2\pi)^{-n} \int e^{{\textrm{i}}(x-y) \cdot \xi} c_h(x,\xi) \dif \xi, \label{eq:KtoSymbol-MLmGWsSchroEqu2019} \\
	c_h(x,\xi) = \int e^{-{\textrm{i}}\xi\cdot(x-y)} K_h(x,y) \dif x, \label{eq:SymboltoK-MLmGWsSchroEqu2019}
	\end{numcases}
\end{subequations}
hold in the distributional sense, and the integrals in \eqref{eq:KandSymbol-MLmGWsSchroEqu2019} shall be understood as oscillatory integrals.
{Despite} the fact that $h$ usually is not a function, {intuitively speaking, however,} it is helpful to keep in mind the following correspondence,
\[
K_h(x,y) \sim \mathbb E_\omega \big( \overline{h(x,\omega)} h(y,\omega) \big).
\]

\subsection{The well-posedness of the direct problem} \label{subsec:DP-MLmGWsSchroEqu2019}

We first derive two important quantitative estimates. 

\begin{thm} \label{thm:RkBounded-MLmGWsSchroEqu2019}
	For any $0 < s < 1/2$ and $\epsilon > 0$, when $k > 2$,
	$$\nrm[H_{-1/ 2 - \epsilon}^s(\R^3)]{\Rk \varphi} \leq C_{\epsilon,s} k^{-(1 - 2s)} \nrm[H_{1/2 + \epsilon}^{-s}(\R^3)]{\varphi}, \quad \varphi \in H_{1/2 + \epsilon}^{-s}(\R^3).$$
\end{thm}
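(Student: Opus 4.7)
The statement is a weighted Sobolev refinement of Agmon's classical limiting absorption principle for the free resolvent $\Rk$. My plan is to reduce it to the endpoint case $s=0$ and then gain $s$ fractional derivatives by a frequency localisation around the characteristic sphere $|\xi|=k$.

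I would first take as a black box (or briefly reprove) Agmon's $L^{2}$-weighted resolvent estimate
\[
\nrm[L^{2}_{-1/2-\epsilon}]{\Rk \varphi} \lesssim k^{-1}\, \nrm[L^{2}_{1/2+\epsilon}]{\varphi},\qquad k>2,
\]
which is the $s=0$ case of the theorem. A standard derivation uses the Fourier representation $\widehat{\Rk\varphi}(\xi)=(|\xi|^{2}-k^{2}-\mathrm{i}0)^{-1}\widehat\varphi(\xi)$, the Sokhotski--Plemelj splitting of the symbol into a principal value part and a surface-measure part on $\{|\xi|=k\}$, and the Stein--Tomas restriction theorem for $\mathbb{S}^{2}$, with the rescaling $\xi=k\eta$ generating the $k^{-1}$ factor.

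To upgrade to $0<s<1/2$, I would split in frequency using a smooth cutoff $1=\chi_{\leq 2k}(\xi)+\chi_{\geq 2k}(\xi)$ straddling the characteristic sphere, and rephrase the sought bound as an estimate for the conjugated operator $(I-\Delta)^{s/2}\Rk(I-\Delta)^{s/2}\colon L^{2}_{1/2+\epsilon}\to L^{2}_{-1/2-\epsilon}$. On the low-frequency piece, the multiplier $\langle\xi\rangle^{s}\chi^{1/2}_{\leq 2k}(\xi)$ defines a symbol in $S^{0}$ whose Calder\'on--Vaillancourt seminorms are of size $k^{s}$, so the corresponding operator is bounded on $L^{2}_{\pm(1/2+\epsilon)}$ with norm $\lesssim k^{s}$; sandwiching the endpoint estimate between two such factors yields exactly $k^{s}\cdot k^{-1}\cdot k^{s}=k^{-(1-2s)}$. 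On the high-frequency piece, the symbol $\langle\xi\rangle^{2s}(|\xi|^{2}-k^{2}-\mathrm{i}0)^{-1}\chi_{\geq 2k}(\xi)$ is pointwise bounded by $C\langle\xi\rangle^{2s-2}$, which, since $s<1$, attains its maximum near $|\xi|\simeq 2k$ and is $\lesssim k^{2s-2}$; this is a better-than-required uniform $L^{2}$-multiplier bound, and transfers to the weighted spaces since $\langle x\rangle^{-1/2-\epsilon}$ is bounded.

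The main obstacle is the contribution of frequencies close to the characteristic sphere $|\xi|=k$. The threshold $s<1/2$ is sharp and reflects the trace theorem for Sobolev functions on codimension-one hypersurfaces: the imaginary part of the limiting resolvent symbol is essentially the surface measure on $\{|\xi|=k\}$, which lies in $H^{-1/2-\epsilon}(\R^{3})$ but fails at $H^{-1/2}$. The $\epsilon$-gain in the polynomial weights $\langle x\rangle^{\pm(1/2+\epsilon)}$ is precisely what compensates for this borderline behaviour and allows the estimate to hold for every $s$ strictly below $1/2$. The delicate bookkeeping to carry out will be the tracking of the $k$-dependent Calder\'on--Vaillancourt seminorms of the $k$-scaled cutoff $\chi_{\leq 2k}$ together with its commutators with the weights, ensuring that no hidden $k$-loss spoils the claimed rate $k^{-(1-2s)}$.
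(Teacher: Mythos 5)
Your proposal is correct in outline, but it follows a genuinely different route from the one in the paper. The paper works directly with the sesquilinear form $(\mathcal{R}_{k,\tau}\varphi,\psi)_{L^2}$ of the regularized resolvent, splits the frequency integral radially into three pieces in \eqref{eq:RkBoundedI123-MLmGWsSchroEqu2019} (the region away from the characteristic sphere $\{|\xi|=k\}$; the contribution with the spherical data frozen at $r=k$, handled by a contour deformation into the lower half-plane; and a H\"older-difference remainder), estimates each piece via the trace/restriction estimates for $H^{1/2+\epsilon}$ on spheres from Eskin, and finally sends $\tau\to 0^+$ to obtain the limiting absorption principle \eqref{eq:LimAbsPrin1-MLmGWsSchroEqu2019}--\eqref{eq:LimAbsPrin2-MLmGWsSchroEqu2019}; the derivative gain $H^{-s}\to H^{s}$ and the decay $k^{2s-1}$ are extracted simultaneously by tracking the weight $\agl[r]^{1/p}$ with $s=1/(2p)$. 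You instead modularize: take the classical $s=0$ Agmon bound as the endpoint and bootstrap the $2s$ derivatives by a frequency cutoff at $|\xi|\simeq 2k$ plus a multiplier sandwich, with the elliptic region killed by the pointwise symbol bound $\agl[\xi]^{2s-2}\lesssim k^{2s-2}$. Your scheme does work: the multiplier $\agl[\xi]^{s}\chi_{\leq 2k}^{1/2}(\xi)$ is $k^{s}$ times a symbol in $S^0$ with seminorms uniform in $k$ (derivatives of the rescaled cutoff cost $k^{-1}\simeq\agl[\xi]^{-1}$ on their support), hence it is bounded on $L^2_{\pm(1/2+\epsilon)}$ with norm $\lesssim k^{s}$, and the sandwich yields $k^{s}\cdot k^{-1}\cdot k^{s}=k^{-(1-2s)}$. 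What your route buys is brevity and a clean separation of the two mechanisms (endpoint decay versus derivative gain); what it costs is that essentially all of the analytic content --- the sharp $k^{-1}$ rate and the very existence of the $-\mathrm{i}0$ limit --- is outsourced to the cited Agmon estimate, which is exactly what the paper proves from scratch and needs in the form \eqref{eq:LimAbsPrin2-MLmGWsSchroEqu2019} to identify the convolution operator $\Rk$ with the Fourier-multiplier limit. Two small repairs: choose the cutoff so that $\chi_{\leq 2k}^{1/2}$ is genuinely smooth (the square root of a smooth bump need not be), and note that your sharpness remark misplaces the role of $s<1/2$: the trace theorem on $\{|\xi|=k\}$ is what fixes the weight exponent at $1/2+\epsilon$, whereas your sandwich works for every $s>0$; the restriction $s<1/2$ only keeps the rate $k^{2s-1}$ decaying (in the paper it arises from the parametrization $s=1/(2p)$ with $p>1$).
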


\begin{thm} \label{thm:VBounded-MLmGWsSchroEqu2019}
	Assume that $q(\cdot,\omega)$ is microlocally isotropic of order $-m$. 
	Then 
	\sq{in any dimension $n \geq 3$ and for} 
	every $s > (n-m)/2$ and every $\epsilon \in (0, 3/2]$, $q \colon H_{-1/2 - \epsilon}^s(\Rn) \to H_{1/2 + \epsilon}^{-s}(\Rn)$ is bounded almost surely,
	\[
	\nrm[H_{1/2 + \epsilon}^{-s}(\R^3)]{q(\cdot,\omega) \varphi(\cdot)} \leq C_{\epsilon,s}(\omega) \nrm[H_{-1/2 - \epsilon}^s(\Rn)]{\varphi}, \quad \varphi \in H_{1/2 + \epsilon}^{-s}(\Rn), \quad \aee \omega \in \Omega.
	\]
	The random variable $C_{\epsilon,s}(\omega)$ is finite almost surely.
\end{thm}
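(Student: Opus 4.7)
The plan is to combine the almost-sure Sobolev regularity of $q$ supplied by Lemma \ref{lem:migrRegu-MLmGWsSchroEqu2019} with the compact support of $q$ in $D_q$, and then reduce to an unweighted multiplier bound that is proved by $L^2$-duality together with a bilinear Sobolev-multiplication estimate.

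First, by Lemma \ref{lem:migrRegu-MLmGWsSchroEqu2019}, almost surely $q(\cdot,\omega) \in H^{-s',p}(\Rn)$ for every $s' > (n-m)/2$ and every $p \in (1,\infty)$; since $s > (n-m)/2$ by hypothesis, I fix an intermediate $s'$ with $(n-m)/2 < s' < s$, leaving $p$ free for the moment. Next, I would exploit $\supp q(\cdot,\omega) \subset D_q$ to eliminate the polynomial weights. Pick $\chi \in C_c^\infty(\Rn)$ with $\chi \equiv 1$ on a neighborhood of $D_q$, so that $q\varphi = \chi(q\varphi) = q(\chi\varphi)$. On the output side, since $\chi(q\varphi)$ has compact support, a Paley--Wiener / Plancherel argument yields
\[
\nrm[H^{-s}_{1/2+\epsilon}(\Rn)]{q\varphi} \lesssim \nrm[H^{-s}(\Rn)]{q\varphi},
\]
with constant depending only on $D_q$ and $\epsilon$. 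On the input side, multiplication by $\chi$ sends $H^s_{-1/2-\epsilon}(\Rn)$ into $H^s(\Rn)$, since $\langle \cdot \rangle^{1/2+\epsilon}\chi \in C_c^\infty(\Rn)$. These two reductions leave us with the unweighted estimate
\[
\nrm[H^{-s}(\Rn)]{q\psi} \le C(\omega)\, \nrm[H^s(\Rn)]{\psi}, \quad \psi \text{ supported in a fixed neighborhood of } D_q.
\]

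For this unweighted bound I would invoke $L^2$-duality together with the regularity $q \in H^{-s',p}(\Rn)$:
\[
\nrm[H^{-s}(\Rn)]{q\psi} = \sup_{\nrm[H^s(\Rn)]{\zeta}\le 1} |\agl[q,\psi\zeta]| \le \nrm[H^{-s',p}(\Rn)]{q} \sup_{\nrm[H^s(\Rn)]{\zeta}\le 1}\nrm[H^{s',p'}(\Rn)]{\psi\zeta},
\]
with $1/p + 1/p' = 1$. The problem then reduces to the bilinear estimate
\[
\nrm[H^{s',p'}(\Rn)]{\psi\zeta} \lesssim \nrm[H^s(\Rn)]{\psi}\,\nrm[H^s(\Rn)]{\zeta},
\]
which follows from the Kato--Ponce fractional Leibniz rule together with the Sobolev embeddings $H^s(\Rn) \hookrightarrow H^{s',r_1}(\Rn)$ and $H^s(\Rn) \hookrightarrow L^{r_2}(\Rn)$ for admissible exponents depending on $n$, $s$, $s'$. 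The strict gap $s - s' > 0$, guaranteed by the assumption $s > (n-m)/2$, is exactly what is traded to pass from the $L^2$-based norm to the $L^{p'}$-based norm while retaining $s'$ derivatives.

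The main technical obstacle is the numerology of this bilinear step: one must choose $p$ (close to $2$ when $s < n/2$, or any $p \in (1,\infty)$ when $s \ge n/2$) so that the Sobolev embedding and the $1/p + 1/p' = 1$ bookkeeping are jointly consistent in dimension $n$. Once such a $p$ is fixed, the constant in the unweighted multiplier estimate is controlled by $\nrm[H^{-s',p}(\Rn)]{q(\cdot,\omega)}$, which is almost surely finite by Lemma \ref{lem:migrRegu-MLmGWsSchroEqu2019}; hence $C_{\epsilon,s}(\omega) < +\infty$ a.s., completing the proof.
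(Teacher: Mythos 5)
Your proposal is correct and follows essentially the same route as the paper: cut off with $\chi\in C_c^\infty$ equal to $1$ near $\supp q$, pass to the bilinear form $\agl[q,(\chi\varphi)(\chi\psi)]$ by duality, bound it by $\nrm[H^{-s',p'}]{q}$ times a product norm controlled via the Kato--Ponce fractional Leibniz rule and the Sobolev embedding $H^s\hookrightarrow H^{s',q}$, and invoke the almost-sure finiteness of $\nrm[H^{-s',p'}]{q}$ from Lemma \ref{lem:migrRegu-MLmGWsSchroEqu2019}. The only presentational difference is that you remove the weights up front by citing boundedness of multiplication by $\chi$ between weighted and unweighted spaces, whereas the paper proves that multiplier bound explicitly via a pseudodifferential symbol computation and then extends from $\scrS(\Rn)$ by density.
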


The arguments in proving Theorems \ref{thm:RkBounded-MLmGWsSchroEqu2019} and \ref{thm:VBounded-MLmGWsSchroEqu2019} are inspired by \cite{caro2016inverse} and
[$\S 29$,\,\citen{eskin2011lectures}].

\begin{proof}[Proof of Theorem \ref{thm:RkBounded-MLmGWsSchroEqu2019}]
	\sq{Define an operator }
	\begin{equation}\label{eq:opnnna1}
	\sq{\mathcal{R}_{k,\tau} \varphi(x) := (2\pi)^{-3/2} \int_{\R^3} e^{{\textrm{i}}x\cdot \xi} \frac {\hat \varphi(\xi)} {|\xi|^2 - k^2 - \textrm{i}\tau} \dif \xi,}
	\end{equation}
	\sq{where $\tau\in\mathbb{R}_+$. }
	Fix a function $\chi$ satisfying
	\begin{equation} \label{eq:cutoffFunc-MLmGWsSchroEqu2019}
	\left\{\begin{aligned} 
	& \chi \in C_c^\infty(\Rn),\, 0 \leq \sq{\chi} \leq 1, \\
	& \chi(x) = 1 \mbox{ when } |x| \leq 1, \\
	& \chi(x) = 0 \mbox{ when } |x| \geq 2.
	\end{aligned}\right.
	\end{equation}
	Write $\mathfrak{R} \psi(x) := \psi(-x)$. Fix $p \in (1,+\infty)$, we have
	\begin{align}
	& \ (\mathcal{R}_{k,\tau} \varphi,\psi)_{L^2(\R^3)} \nonumber\\
	= \ & \ \int_{\R^3} \mathcal{R}_{k,\tau} \varphi (x) \overline{\psi(x)} \dif x = \int_{\R^3} \calF\{\mathcal{R}_{k,\tau} \varphi\} (\xi) \cdot \calF\{\mathfrak{R} \overline \psi\}(\xi) \dif \xi \nonumber\\
	= \ & \ \int_0^\infty \frac {(1 - \chi^2(r - k))} {r^2 - k^2 - {\textrm{i}}\tau} \dif r \cdot \int_{|\xi| = r} \hat \varphi (\xi) \cdot \widehat{\mathfrak{R} \overline \psi}(\xi) \dif{S(\xi)} \nonumber\\
	& \ + \int_0^\infty \frac {\agl[r]^{1/p}\, r^2 \chi^2(r - k)} {r^2 - k^2 - {\textrm{i}}\tau} \dif r \cdot \int_{\mathbb{S}^2} [\agl[k]^{\frac {-1} {2p}} \hat \varphi (k\omega)] [\agl[k]^{\frac {-1} {2p}} \widehat{\mathfrak{R} \overline \psi}(k\omega)] \dif{S(\omega)} \nonumber\\
	& \ + \int_0^\infty \frac {\agl[r]^{1/p}\, r^2 \chi^2(r - k)} {r^2 - k^2 - {\textrm{i}}\tau} \dif r \cdot \int_{\mathbb{S}^2} \{ [\agl[r]^{\frac {-1} {2p}} \hat \varphi (r\omega)] [\agl[r]^{\frac {-1} {2p}} \widehat{\mathfrak{R} \overline \psi}(r\omega)] \nonumber\\
	& \hspace*{33ex} - [\agl[k]^{\frac {-1} {2p}} \hat \varphi (k\omega)] [\agl[k]^{\frac {-1} {2p}} \widehat{\mathfrak{R} \overline \psi}(k\omega)] \} \dif{S(\sq{\omega})} \nonumber\\
	=: & \ I_1(\tau) + I_2(\tau) + I_3(\tau). \label{eq:RkBoundedI123-MLmGWsSchroEqu2019}
	\end{align}

	\smallskip

	Now we estimate $I_1(\tau)$. By Young's inequality we have
	\begin{equation} \label{eq:YoungIneq-MLmGWsSchroEqu2019}
	ab \leq a^p/p + b^q/q \quad \Rightarrow \quad (p^{1/p} q^{1/q}) a^{1/p} b^{1/q} \leq a + b
	\end{equation}
	for $a,b > 0,\, p,q > 1,\, 1/p + 1/q = 1$. 
	Note that $|\widehat{\mathfrak{R} \overline \psi}(\xi)| = |\hat \psi(\xi)|$, one can compute
	\begin{align}
	|I_1(\tau)|
	& \leq \int_0^\infty \frac {1 - \chi^2(r - k)} {1 \cdot |r - k|} \dif r \cdot \int_{|\xi| = r} |\hat \varphi (\xi)| \cdot |\widehat{\mathfrak{R} \overline \psi}(\xi)| \dif{S(\xi)} \nonumber\\
	& \leq \int_0^\infty \frac {1 - \chi^2(r - k)} {1 \cdot p^{1/p} q^{1/q} (r+1)^{1/p} (k-1)^{1/q}} \dif r \cdot \int_{|\xi| = r} |\hat \varphi (\xi)| \cdot |\hat \psi(\xi)| \dif{S(\xi)} \quad (\text{by } \eqref{eq:YoungIneq-MLmGWsSchroEqu2019}) \nonumber\\
	& \leq C_p k^{-1/q} \int_0^\infty \agl[r]^{-1/p} \dif r \cdot \int_{|\xi| = r} |\hat \varphi (\xi)| \cdot |\hat \psi(\xi)| \dif{S(\xi)} \nonumber\\
	& \leq C_p k^{1/p - 1} \nrm[H_\delta^{-1/(2p)}(\R^3)]{\varphi} \nrm[H_\delta^{-1/(2p)}(\R^3)]{\psi}, \label{eq:RkBoundedI1-MLmGWsSchroEqu2019}
	\end{align}
	where $1 < p < +\infty$ and $\delta > 0$ and the $C_p$ is independent of $\tau$.

	\smallskip

	We next estimate $I_2(\tau)$. One has
	\begin{align}
	I_2(\tau)
	& = \int_{\mathbb{S}^2} [\agl[k]^{\frac {-1} {2p}} \hat \varphi (k\omega)] [\agl[k]^{\frac {-1} {2p}} \widehat{\mathfrak{R} \overline \psi}(k\omega)] \int_0^\infty \frac {\agl[r]^{\frac 1 p} r^2 \chi^2(r - k) \dif r} {r^2 - k^2 - {\textrm{i}}\tau} \dif{S(\omega)}. \label{eq:RkBoundedI2Inter1-MLmGWsSchroEqu2019}
	\end{align}
	Let $\tau_0 \in (0,1)$ be a fixed number whose value shall be specified later. 
	Write $p_\tau(r) := p(r) = r^2 - k^2 - {\textrm{i}}\tau$. Recall that $\chi(r-k) = 0$ when $|r - k| > 2$. When $\tau_0 \leq |r - k| \leq 2$, we have
	\begin{equation} \label{eq:p1-MLmGWsSchroEqu2019}
	|p(r)| \geq |\Re p(r)| = |r - k| |r + k| \geq \tau_0 (2k - 2) \geq \tau_0 k.
	\end{equation}
	Write $\Gamma_{k,\tau_0} := \{r \in \mathbb C ; |r - k| = \tau_0, \Im r \leq 0 \}$. When $r \in \Gamma_{k,\tau_0}$, we have
	\begin{equation} \label{eq:p2-MLmGWsSchroEqu2019}
		\forall \tau \in (0,\tau_0) , \quad |p_\tau(r)| \geq |r-k| \,|2k + (r-k)| - \tau_0 = \tau_0 (2k - \tau_0) - \tau_0 \geq \tau_0 k.
	\end{equation}
	Combining \eqref{eq:p1-MLmGWsSchroEqu2019} and \eqref{eq:p2-MLmGWsSchroEqu2019}, we conclude that $\forall \tau \in (0,\tau_0), \forall k > 2$,
	\begin{equation} \label{eq:p-MLmGWsSchroEqu2019}
	|p_\tau(r)| \geq \tau_0 k, \quad \Forall r \in \{ r \in \R_+; 2 \geq |r - k| \geq \tau_0 \} \cup \Gamma_{k,\tau_0},\, \forall \tau \in (0,\tau_0).
	\end{equation}
	By using Cauchy's integral theorem, we change the integral domain w.r.t.~$r$ in \eqref{eq:RkBoundedI2Inter1-MLmGWsSchroEqu2019} from $\R_+$ to $\{ r \in \R_+; 2 \geq |r - k| \geq \tau_0 \} \cup \Gamma_{k,\tau_0}$. Combining this with the estimate \eqref{eq:p-MLmGWsSchroEqu2019} and noting that $\chi(r - k) = 1$ when $r \in \{ r \in \R; |r-k| \leq 1 \}$, we have
	\begin{align}
	|I_2(\tau)|
	& \leq \int_{|\xi|=k} \agl[\xi]^{\frac {-1} {2p}} |\hat \varphi (\xi)| \cdot \agl[\xi]^{\frac {-1} {2p}} |\hat \psi(\xi)| \big( \int_{\{ r \in \R_+ \,;\, 2 \geq |r - k| \geq \tau_0 \}} \frac {\agl[r]^{\frac 1 p} (r/k)^2} {\tau_0 k} \dif r \big) \dif{S(\xi)} \nonumber\\
	& \ \ \ + \int_{|\xi|=k} \agl[\xi]^{\frac {-1} {2p}} |\hat \varphi (\xi)| \cdot \agl[\xi]^{\frac {-1} {2p}} |\hat \psi(\xi)| \big( \int_{\Gamma_{k,\tau_0}} \frac {(1+|r|^2)^{\frac 1 {2p}} (|r|/k)^2} {\tau_0 k} \dif r \big) \dif{S(\xi)} \label{eq:I2Inter1-MLmGWsSchroEqu2019}
	\end{align}
	for all $\tau \in (0,\tau_0)$ and for all $ k > 2$.
	
	Note that in $\{ r \in \R_+; 2 \geq |r - k| \geq \tau_0 \}$ we have
	\begin{equation} \label{eq:T2r1-MLmGWsSchroEqu2019}
	\agl[r]^{2s} \leq 5^s \agl[k]^{2s}, \quad 1 \leq (r/k)^2 \leq 4.
	\end{equation}
	For $r \in \Gamma_{k,\tau_0}$ the complex number $(1+r^2)$ can be expressed as $R(r) e^{{\textrm{i}}\theta(r)}$ for real valued functions $R(r)$ and $\theta(r)$. 
	Now we choose $\tau_0$ small enough such that $|\theta(r)| < \frac \pi {10}$ in $\Gamma_{2,\tau_0}$, then $|\theta(r)| < \frac \pi {10}$ in $\Gamma_{k,\tau_0}$ for all $k \geq 2$. This can be easily seen from the geometric view. 
	Thus $(1+r^2)^s$ is well-defined for all $|s| \leq 2$, and
	\begin{equation} \label{eq:T2r2-MLmGWsSchroEqu2019}
	\forall r \in \Gamma_{k,\tau_0}, \quad |(1+r^2)^s| = |1+r^2|^s \leq (1+|r|^2)^s \leq \agl[k+\tau_0]^{2s} \leq C_{\tau_0} \agl[k]^{2s}
	\end{equation}
	for some constant $C_{\tau_0}$ independent of $\tau$ when $0 < \tau < \tau_0$. 
	Similarly, we have
	\begin{equation} \label{eq:T2r3-MLmGWsSchroEqu2019}
	\forall r \in \Gamma_{k,\tau_0}, \quad |r/k|^2 \leq (k+\tau_0)^2/k^2 \leq C_{\tau_0}
	\end{equation}
	for some constant $C_{\tau_0}$ independent of $\tau$.
	Hence by \eqref{eq:T2r1-MLmGWsSchroEqu2019}, \eqref{eq:T2r3-MLmGWsSchroEqu2019} and Remark 13.1 in \cite{eskin2011lectures}, we can continue \eqref{eq:I2Inter1-MLmGWsSchroEqu2019} as
	\begin{align}
	|I_2(\tau)|
	& \leq C_{\tau_0} \int_{|\xi|=k} \agl[\xi]^{\frac {-1} {2p}} |\hat \varphi (\xi)| \agl[\xi]^{\frac {-1} {2p}} |\hat \psi(\xi)| \big( \int_{\Gamma_{k,\tau_0} \cup \{ r \in \R_+; 2 \geq |r - k| \geq \tau_0 \}} \frac {\agl[k]^{1/p}} {\tau_0 k} \dif r \big) \dif{S(\xi)} \nonumber\\
	& \leq C_{\tau_0} k^{1/p - 1} \nrm[H^{1/2 + \epsilon}(\R^3)]{\agl[\cdot]^{-1/(2p)} \hat \varphi(\cdot)} \nrm[H^{1/2 + \epsilon}(\R^3)]{\agl[\cdot]^{-1/(2p)} \hat \psi(\cdot)} \nonumber\\
	& \leq C_{\tau_0,\epsilon} k^{1/p - 1} \nrm[H_{1/2 + \epsilon}^{-1/(2p)}(\R^3)]{\varphi} \nrm[H_{1/2 + \epsilon}^{-1/(2p)}(\R^3)]{\psi}, \label{eq:RkBoundedI2-MLmGWsSchroEqu2019}
	\end{align}
	where the constant $C_{\tau_0,\epsilon}$ is independent of $\tau$.
	\sq{It should be pointed out that the presence of the infinitesimal number $\epsilon$ in $\nrm[H^{1/2 + \epsilon}]{\cdot}$ in \eqref{eq:RkBoundedI2-MLmGWsSchroEqu2019} comes from the requirement that the order of the Sobolev space should be strictly greater than $1/2$; see Remark 13.1 in \cite{eskin2011lectures} for more relevant discussion.}
	{\color{blue} Here, in deriving the last inequality in \eqref{eq:RkBoundedI2-MLmGWsSchroEqu2019}, we have made use of \eqref{eq:normEquiv-MLmGWsSchroEqu2019}.}

	\smallskip

	Finally, we estimate $I_3(\tau)$. Denote
	$\mathbb F(r\omega) = \mathbb F_r(\omega) := \agl[r]^{-1/(2p)} \hat \varphi(r\omega)$ and $\mathbb G(r\omega) = \mathbb G_r(\omega) := \agl[r]^{-1/(2p)} \widehat{\mathfrak R \bar \psi}(r\omega)$. One can compute
	\begin{align}
	|I_3(\tau)|
	& = \big| \int_0^\infty \frac {\agl[r]^{1/p}\, r^2 \chi^2(r - k)} {r^2 - k^2 - {\textrm{i}}\tau} \dif r \cdot \int_{\mathbb{S}^2} (\mathbb F_r \mathbb G_r - \mathbb F_k \mathbb G_k) \dif{S(\omega)} \big| \nonumber\\
	& \leq \int_0^\infty \frac {\agl[r]^{1/p} \chi^2(r - k)} {|r^2 - k^2|} \cdot \nrm[L^2(\mathbb S_r^2)]{\mathbb F_r} \cdot \big( r^2 \int_{\mathbb{S}^2} |\mathbb G_r - \mathbb G_k|^2 \dif{S(\omega)} \big)^{\frac 1 2} \dif r \nonumber\\
	& \ \ \ + \int_0^\infty \frac {\agl[r]^{1/p} \chi^2(r - k)} {|r^2 - k^2|} \cdot \big( r^2 \int_{\mathbb{S}^2} |\mathbb F_r - \mathbb F_k|^2 \dif{S(\omega)} \big)^{\frac 1 2} \cdot \big(\frac r k \big)^2 \nrm[L^2(\mathbb S_k^2)]{\mathbb G_k} \dif r, \label{eq:RkBoundedI3.1-MLmGWsSchroEqu2019}
	\end{align}
	where $\mathbb S_r^2$ signifies the central sphere of radius $r$.
	Combining both Remark 13.1 and (13.28) in \cite{eskin2011lectures} and \eqref{eq:normEquiv-MLmGWsSchroEqu2019} and \eqref{eq:YoungIneq-MLmGWsSchroEqu2019}, we can continue \eqref{eq:RkBoundedI3.1-MLmGWsSchroEqu2019} as
	\begin{align}
	|I_3(\tau)|
	& \leq C_{\alpha,\epsilon} \int_0^\infty \frac {\agl[r]^{1/p} \chi^2(r - k)} {|r - k| (r+k)} \cdot \nrm[H^{1/2+\epsilon}(\R^3)]{\mathbb F} \cdot |r - k|^\alpha \cdot \nrm[H^{1/2+\epsilon}(\R^3)]{\mathbb G} \dif r \nonumber\\
	& \leq C_{\alpha,\epsilon,p} \int_0^\infty \frac {\agl[r]^{1/p} \chi^2(r - k)} {|r - k|^{1-\alpha} (r+1)^{1/p} (k-1)^{1-1/p}} \dif r \cdot \nrm[H^{1/2+\epsilon}(\R^3)]{\mathbb F} \nrm[H^{1/2+\epsilon}(\R^3)]{\mathbb G} \nonumber\\
	& \leq C_{\alpha,\epsilon,p} k^{1/p-1} \int_0^\infty \frac {\chi^2(r - k)} {|r - k|^{1-\alpha}} \dif r \cdot \nrm[H_{1/2+\epsilon}^{-1/(2p)}(\R^3)]{\varphi} \cdot \nrm[H_{1/2+\epsilon}^{-1/(2p)}(\R^3)]{\psi} \nonumber\\
	& \leq C_{\alpha,\epsilon,p} k^{1/p-1} \nrm[H_{1/2+\epsilon}^{-1/(2p)}(\R^3)]{\varphi} \cdot \nrm[H_{1/2+\epsilon}^{-1/(2p)}(\R^3)]{\psi}, \label{eq:RkBoundedI3-MLmGWsSchroEqu2019}
	\end{align}
	where the $\epsilon$ can be any positive real number and  the $\alpha$ satisfies $0 < \alpha < \epsilon$, and the constant $C_{\alpha,\epsilon,p}$ is independent of $\tau$.
	
	Combining \eqref{eq:RkBoundedI123-MLmGWsSchroEqu2019}, \eqref{eq:RkBoundedI1-MLmGWsSchroEqu2019}, \eqref{eq:RkBoundedI2-MLmGWsSchroEqu2019} and \eqref{eq:RkBoundedI3-MLmGWsSchroEqu2019}, we arrive at
	\begin{equation*}
	|(\mathcal{R}_{k,\tau} \varphi,\psi)_{L^2(\R^3)}| \leq |I_1(\tau)| + |I_2(\tau)| + |I_3(\tau)| 
	\leq C k^{1/p - 1} \nrm[H_{1/2 + \epsilon}^{-1/(2p)}(\R^3)]{\varphi} \nrm[H_{1/2 + \epsilon}^{-1/(2p)}(\R^3)]{\psi},
	\end{equation*}
	which implies that
	\begin{equation} \label{eq:RkeBoundedf-MLmGWsSchroEqu2019}
	\nrm[H_{-1/2 - \epsilon}^{1/(2p)}(\R^3)]{\mathcal{R}_{k,\tau} \varphi} 
	\leq C k^{1/p - 1} \nrm[H_{1/2 + \epsilon}^{-1/(2p)}(\R^3)]{\varphi}
	\end{equation}
	for some constant $C$ independent of $\tau$.

	\medskip

	Next we study the limiting case $\lim\limits_{\tau \to 0^+} \mathcal{R}_{k,\tau} \varphi$. For any two positive real numbers $\tau_1, \tau_2 < \tilde \tau$, we study $|I_j(\tau_1) - I_j(\tau_2)|$ for $j = 1,2,3$.
	
	Similar to our previous derivation, we have
	\begin{align}
	|I_1(\tau_1) - I_1(\tau_2)|
	& \leq \int_0^\infty \frac {|\tau_1 - \tau_2| (1 - \chi^2(r - k))} {|r^2 - k^2| \cdot p^{\frac 1 p} q^{\frac 1 q} (r+1)^{\frac 1 p} (k-1)^{\frac 1 q}} \dif r \cdot \int_{|\xi| = r} |\hat \varphi (\xi)| \cdot |\hat \psi(\xi)| \dif{S(\xi)} \nonumber\\
	& \leq \tilde \tau \, C_p k^{1/p - 1} \nrm[H_\delta^{-1/(2p)}(\R^3)]{\varphi} \nrm[H_\delta^{-1/(2p)}(\R^3)]{\psi}, \label{eq:Rk12BoundedI1-MLmGWsSchroEqu2019}
	\end{align}
	and
	\begin{align}
	|I_2(\tau_1) - I_2(\tau_2)|
	& \leq C \int_{|\xi|=k} \agl[\xi]^{\frac {-1} {2p}} |\hat \varphi (\xi)| \agl[\xi]^{\frac {-1} {2p}} |\hat \psi(\xi)| \big( \int_{\{ r \in \R_+; 2 \geq |r - k| \geq \tau_0 \}} \frac {|\tau_1 - \tau_2| \agl[k]^{\frac 1 p}} {(\tau_0 k)^2} \dif r \big) \dif{S(\xi)} \nonumber\\
	& \ \ \ + C \int_{|\xi|=k} \agl[\xi]^{\frac {-1} {2p}} |\hat \varphi (\xi)| \agl[\xi]^{\frac {-1} {2p}} |\hat \psi(\xi)| \big( \int_{\Gamma_{k,\tau_0}} \frac {|\tau_1 - \tau_2| \agl[k]^{\frac 1 p}} {(\tau_0 k)^2} \dif r \big) \dif{S(\xi)} \nonumber\\
	& \leq \tilde \tau \, C k^{1/p - 1} \nrm[H_{1/2 + \epsilon}^{-1/(2p)}(\R^3)]{\varphi} \nrm[H_{1/2 + \epsilon}^{-1/(2p)}(\R^3)]{\psi}. \label{eq:Rk12BoundedI2-MLmGWsSchroEqu2019}
	\end{align}
	
	To analyze $I_3(\tau)$ as $\tau$ goes to zero, we note that by \eqref{eq:YoungIneq-MLmGWsSchroEqu2019} one has
	\begin{equation*}
	C_\beta (\Re z)^\beta (\Im z)^{1-\beta} \leq |z|, \quad \forall z \in \mathbb{C},
	\end{equation*}
	which holds for all $\beta \in (0,1)$ and some constant $C_\beta$. Without loss of generality, we assume $\tau_1 \leq \tau_2$. Hence we can compute
	\begin{align*}
	\big| \frac 1 {r^2 - k^2 - {\textrm{i}}\tau_1} - \frac 1 {r^2 - k^2 - {\textrm{i}}\tau_2} \big|
	& \leq \frac 1 {|r^2 - k^2|} \cdot \frac {C \tau_2} {|r^2 - k^2|^\beta \cdot \tau_2^{1-\beta}} 
	\leq \frac {C \tau_2^\beta} {|r^2 - k^2|^{1+\beta}}. 
	\end{align*}
	Thus
	\begin{align}
	& \ |I_3(\tau_1) - I_3(\tau_2)| \nonumber\\
	\lesssim & \ \tau_2^\beta \int_0^\infty \frac {\agl[r]^{\frac 1 p} \chi^2(r - k)} {|r-k|^{1+\beta} (r+1)^{\frac 1 p} (k-1)^{1 - \frac 1 p}} \cdot |r-k|^\alpha \cdot \nrm[H^{\frac 1 2 + \epsilon}(\R^3)]{\mathbb F} \cdot \nrm[H^{\frac 1 2 + \epsilon}(\R^3)]{\mathbb G} \dif r \nonumber\\
	\lesssim & \ \tilde\tau^\beta k^{1/p - 1} \nrm[H_{1/2 + \epsilon}^{-1/(2p)}(\R^3)]{\varphi} \nrm[H_{1/2 + \epsilon}^{-1/(2p)}(\R^3)]{\psi}, \label{eq:Rk12BoundedI3-MLmGWsSchroEqu2019}
	\end{align}
       where the last inequality holds when $0 < \beta < \alpha$.
	
	From \eqref{eq:Rk12BoundedI1-MLmGWsSchroEqu2019}, \eqref{eq:Rk12BoundedI2-MLmGWsSchroEqu2019} and \eqref{eq:Rk12BoundedI3-MLmGWsSchroEqu2019} we arrive at
	\begin{equation} \label{eq:LimAbsPrin1-MLmGWsSchroEqu2019}
	\nrm[H_{-1/2 - \epsilon}^{-1/(2p)}(\R^3)]{\mathcal{R}_{k,\tau_1} \varphi - \mathcal{R}_{k,\tau_2} \varphi} 
	\lesssim \tilde \tau \nrm[H_{1/2 + \epsilon}^{-1/(2p)}(\R^3)]{\varphi}, \quad \forall \tau_1,\tau_2 \in (0,\tilde \tau),
	\end{equation}
	and thus $\mathcal{R}_{k,\tilde \tau} \varphi$ converges and
	\begin{equation} \label{eq:LimAbsPrin2-MLmGWsSchroEqu2019}
	\lim_{\tilde \tau \to 0^+} \mathcal{R}_{k,\tilde \tau} \varphi = \Rk \varphi \quad\text{ in }\quad H_{-1/2 - \epsilon}^{1/(2p)}(\R^3).
	\end{equation}
	The relationships \eqref{eq:LimAbsPrin1-MLmGWsSchroEqu2019} and \eqref{eq:LimAbsPrin2-MLmGWsSchroEqu2019} sometimes refer to as the \emph{limiting absorption principle}. Hence from \eqref{eq:RkeBoundedf-MLmGWsSchroEqu2019} and \eqref{eq:LimAbsPrin2-MLmGWsSchroEqu2019} we conclude that
	$$\nrm[H_{-1/2 - \epsilon}^{1/(2p)}(\R^3)]{\Rk \varphi} \leq C_{\epsilon,p} k^{-(1 - 1/p)} \nrm[H_{1/2 + \epsilon}^{-1/(2p)}(\R^3)]{\varphi}$$
	holds for any $1 < p < +\infty$ and any $\epsilon > 0$. 
	
	The proof is complete.
\end{proof}

\medskip

\begin{proof}[Proof of Theorem \ref{thm:VBounded-MLmGWsSchroEqu2019}]
	Let $\varphi,\psi \in \scrS(\Rn)$ and define $\agl[q \varphi,\psi] := \agl[q,\varphi \psi]$. Choose a function $\chi$ such that
	$\chi \in C_c^\infty(\Rn)$ and $\chi(x) = 1$ when $x \in \supp q$. Choose $s'$ satisfying $-s' < (m-n)/2$ and $p, p'$ satisfying $1 < p < +\infty,\, 1/{p'} + 1/p = 1$. Then according to [Proposition 2.4, \citen{caro2016inverse}], $\nrm[H^{-s',p'}(\R^n)]{q} < +\infty$ almost surely. Denote $\nrm[H^{-s',p'}(\R^n)]{q}$ as $C_s(\omega)$. One can compute
	\begin{align}
	|\agl[q \varphi,\psi]|
	& = |\agl[q,(\chi \varphi) (\chi \psi)]| = |\agl[(I-\Delta)^{-s'} q,(I-\Delta)^{s'} \big( (\chi \varphi) (\chi \psi) \big) ]| \nonumber\\
	& \leq \nrm[H^{-s',p'}(\R^n)]{q} \cdot \nrm[L^p(\R^n)]{(I-\Delta)^{s'} \big( (\chi \varphi) (\chi \psi) \big) } \nonumber\\
	& = C_s(\omega) \nrm[L^p(\R^n)]{(I-\Delta)^{s'} \big( (\chi \varphi) (\chi \psi) \big) }. \label{eq:VBddInter1-MLmGWsSchroEqu2019}
	\end{align}
	According to the fractional Leibniz rule \cite{grafakos2014KatoPonce}, when $1/p = 1/2 + 1/q$, one has
	\begin{align}
	\nrm[L^p(\R^n)]{(I-\Delta)^{s'} \big( (\chi \varphi) (\chi \psi) \big) } 
	& \leq C_s(\omega) \big( \nrm[L^2(\R^n)]{\chi \varphi} \nrm[H^{s',q}(\R^n)]{\chi \psi} \nonumber\\
	& \ \ + \nrm[L^2(\R^n)]{\chi \psi} \nrm[H^{s',q}(\R^n)]{\chi \varphi} \big). \label{eq:FracLeibRule-MLmGWsSchroEqu2019}
 	\end{align}
	By \eqref{eq:VBddInter1-MLmGWsSchroEqu2019}-\eqref{eq:FracLeibRule-MLmGWsSchroEqu2019} and noting the Sobolev embedding $H^s(\R^n) \hookrightarrow H^{s',q}(\R^n)$ when $s - n/2 \geq s' - n/q$, $s > s'$, we can continue \eqref{eq:VBddInter1-MLmGWsSchroEqu2019} as
	\begin{align}
	|\agl[q \varphi,\psi]|
	& \lesssim C_s(\omega) \big( \nrm[L^2(\R^n)]{\chi \varphi} \cdot \nrm[H^{s',q}(\R^n)]{\chi \psi} + \nrm[L^2(\R^n)]{\chi \psi} \cdot \nrm[H^{s',q}(\R^n)]{\chi \varphi} \big) \nonumber\\
	& \lesssim C_s(\omega) \nrm[H^s(\R^n)]{\chi \varphi} \cdot \nrm[H^s(\R^n)]{\chi \psi}. \label{eq:VBddInter2-MLmGWsSchroEqu2019}
	\end{align}
	Because $1 < p' < +\infty$ and $s' > -\frac {m-n} 2$, the real number $s$ should satisfy
	$$s \geq s' + \frac n 2 - \frac n q = s' + \frac n 2 - n(\frac 1 p - \frac 1 2) = s' + n - \frac n p = s' + \frac n {p'} \geq s' >  \frac {n-m} 2.$$
	
	Next we adapt the proof of Lemma 3.7 in \cite{caro2016inverse} to show that
	\begin{equation} \label{eq:fWeighted-MLmGWsSchroEqu2019}
	\nrm[H^s(\R^n)]{\chi \varphi} \leq C \nrm[H_{-2}^s(\R^n)]{\varphi}, \quad \varphi \in \scrS(\Rn).
	\end{equation}
	Rewriting the right-hand side of \eqref{eq:fWeighted-MLmGWsSchroEqu2019} in terms of the $L^2$-norm form, we obtain
	\[
	\nrm[H^s(\R^n)]{\chi \varphi} \leq C \nrm[L^2(\R^n)]{\agl[\cdot]^{-2} (I - \Delta)^{s/2} \varphi}.
	\]
	Write $\psi(x) := \agl[x]^{-2} (I - \Delta)^{s/2} \varphi(x)$. Obviously, $\varphi \in \scrS(\Rn)$ is equivalent to $\psi \in \scrS(\Rn)$. Define $T_a \psi := \chi \cdot (I - \Delta)^{-s/2} (\agl[\cdot]^2 \psi)$. Then $\chi \varphi = T_a \psi$ and \eqref{eq:fWeighted-MLmGWsSchroEqu2019} is equivalent to
	\begin{equation} \label{eq:gWeighted-MLmGWsSchroEqu2019}
	\nrm[H^s(\R^n)]{T_a \psi} \leq C \nrm[L^2(\R^n)]{\psi}.
	\end{equation}
	$T_a$ is a pseudo-differential operator with
	$$a(x,\xi) := \chi(x) \big( \agl[x]^2 \agl[\xi]^{-s} - 2{\textrm{i}}x \cdot \nabla_\xi \agl[\xi]^{-s} - \Delta_\xi \agl[\xi]^{-s} \big)$$
	as its symbol. It is easy to see that $a \in S^{-s}$, and thus according to the properties of pseudo-differential operators \cite{eskin2011lectures}, \eqref{eq:gWeighted-MLmGWsSchroEqu2019} holds, and so does \eqref{eq:fWeighted-MLmGWsSchroEqu2019}.
	
	We can continue the estimates in \eqref{eq:VBddInter2-MLmGWsSchroEqu2019} as
	\begin{align*}
	|\agl[q \varphi,\psi]|
	& \lesssim C_s(\omega) \nrm[H^s(\R^n)]{\chi \varphi} \cdot \nrm[H^s(\R^n)]{\chi \psi} \lesssim C_s(\omega) \nrm[H_{-2}^s(\R^n)]{\varphi} \cdot \nrm[H_{-2}^s(\R^n)]{\psi} \\\
	& \leq C_s(\omega) \nrm[H_{-1/2 - \epsilon}^s(\R^n)]{\varphi} \cdot \nrm[H_{-1/2 - \epsilon}^s(\R^n)]{\psi}, \quad \forall \varphi, \psi \in \scrS(\Rn), 
	\end{align*}
	\sq{where $0 < \epsilon \leq 3/2$}, which implies that
	\begin{equation} \label{eq:VBdd-MLmGWsSchroEqu2019}
	\nrm[H_{1/2 + \epsilon}^{-s}(\R^n)]{q \varphi} \leq C_{\epsilon,s}(\omega)  \nrm[H_{-1/2 - \epsilon}^s(\R^n)]{\varphi}, \quad \forall \varphi \in \scrS(\Rn).
	\end{equation}
	
	We proceed to show that $\scrS(\Rn)$ is dense in $H_{-1/2 - \epsilon}^s(\R^n)$. Fix a function $\varphi$ satisfying \eqref{eq:cutoffFunc-MLmGWsSchroEqu2019}. It is clear that $\varphi \in H_{-1/2 - \epsilon}^s(\R^n)$, and hence we have $\agl[\cdot]^{-1/2 - \epsilon} (I - \Delta)^{s/2} \varphi \in L^2(\Rn)$. Then for any $\delta > 0$ there exists a constant $M$, depending on $\varphi$, such that $\nrm[L^2(\Rn)]{\agl[\cdot]^{-1/2 - \epsilon} (I - \Delta)^{s/2} \varphi - \varphi^{(1)}} < \frac \delta 2$, where $\varphi^{(1)} = \varphi(\cdot/M) \agl[\cdot]^{-1/2 - \epsilon} (I - \Delta)^{s/2} \varphi$. Note that $\varphi^{(1)} \in L^2(\Rn)$ with a compact support. {\color{blue} Furthermore, there exists a sufficiently small constant $\zeta\in\mathbb{R}_+$ such that $\nrm[L^2(\Rn)] {\varphi^{(1)} - \varphi^{(2)}} < \frac \delta 2$, where $\varphi^{(2)} = (\frac 1 {\zeta^n} \varphi(\frac \cdot \zeta)) \ast \varphi^{(1)}$.} The function $\varphi^{(2)}$ is in $C^\infty(\Rn)$ with a compact support, thus is in $\scrS(\Rn)$. Write $\varphi^{(3)} = (I - \Delta)^{-s/2} \big( \agl[\cdot]^{1/2 + \epsilon} \varphi^{(2)} \big)$. Hence $\varphi^{(3)} \in \scrS(\Rn)$ and 
	\begin{align*}
	\nrm[H_{-1/2 - \epsilon}^s(\R^n)]{\varphi - \varphi^{(3)}}
	& = \nrm[L^2(\Rn)]{\agl[\cdot]^{-1/2 - \epsilon} (I - \Delta)^{s/2} \varphi - \agl[\cdot]^{-1/2 - \epsilon} (I - \Delta)^{s/2} \varphi^{(3)}} \\
	& \leq \nrm[L^2(\Rn)]{\agl[\cdot]^{-1/2 - \epsilon} (I - \Delta)^{s/2} \varphi - \varphi^{(1)}} + \nrm[L^2(\Rn)]{\varphi^{(1)} - \varphi^{(2)}} \\
	& < \delta/2 + \delta/2 = \delta.
	\end{align*}
	Therefore $\scrS(\Rn)$ is dense in $H_{-1/2 - \epsilon}^s(\R^n)$. Since $H_{-1/2 - \epsilon}^s(\R^n)$ is a Banach space, and hence by a density argument, the inequality \eqref{eq:VBdd-MLmGWsSchroEqu2019} can be extended to all $\varphi \in H_{-1/2 - \epsilon}^s(\R^n)$. 
	
	The proof is complete.
\end{proof}

We are now in a position to study the well-posedness of the direct scattering problem. To that end, we reformulate \eqref{eq:1-MLmGWsSchroEqu2019} into the Lippmann-Schwinger equation formally (cf. \cite{colton2012inverse}) to obtain
\begin{equation} \label{eq:uscDefn-MLmGWsSchroEqu2019}
(I - \Rk q) u^{sc} = \alpha \Rk q u^i - \Rk f.
\end{equation}


\begin{thm} \label{thm:MildSolUnique-MLmGWsSchroEqu2019}
	When $k$ is large enough such that $\nrm[\mathcal{L}(H_{1/2+\epsilon}^{-s}(\R^3), H_{1/2+\epsilon}^{-s}(\R^3))]{\Rk q} < 1$, there exists a unique stochastic process $u^{sc}(\cdot,\omega) \colon \R^3 \to \mathbb C$ such that $u^{sc}(x)$ satisfies \eqref{eq:uscDefn-MLmGWsSchroEqu2019} almost surely. Moreover, 
	\begin{equation} \label{eq:MildSolUnique-MLmGWsSchroEqu2019}
		\nrm[H_{1/2+\epsilon}^{-s}(\R^3)]{u^{sc}(\cdot,\omega)}
		\lesssim \nrm[H_{1/2+\epsilon}^{-s}(\R^3)]{\alpha \Rk q u^i - \Rk f} \quad a.s.
	\end{equation}
	for any $\epsilon \in \mathbb{R}_+$. 
\end{thm}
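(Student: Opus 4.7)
The plan is to treat the reformulated Lippmann--Schwinger equation \eqref{eq:uscDefn-MLmGWsSchroEqu2019} as a fixed-point equation on the appropriate weighted Sobolev space and invert $I - \mathcal{R}_k q$ via a Neumann series. The hypothesis $\nrm[\mathcal{L}(H_{1/2+\epsilon}^{-s},H_{1/2+\epsilon}^{-s})]{\Rk q} < 1$ is precisely what is needed for this inversion, so the content of the proof is mostly to verify that the objects are all correctly situated in the relevant spaces and to chain the estimates from Theorems \ref{thm:RkBounded-MLmGWsSchroEqu2019} and \ref{thm:VBounded-MLmGWsSchroEqu2019} together properly.

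First, I would verify that the data on the right-hand side of \eqref{eq:uscDefn-MLmGWsSchroEqu2019} lies in the correct space almost surely. Since $f$ is m.i.g.r.~of rough order $-m_f$, Lemma \ref{lem:migrRegu-MLmGWsSchroEqu2019} together with $\supp f \subset D_f$ (a bounded set) places $f$ in a weighted Sobolev space to which Theorem \ref{thm:RkBounded-MLmGWsSchroEqu2019} can be applied, yielding $\Rk f$ in the target space a.s. For the term $\Rk q u^i$ with $u^i = e^{\mathrm{i}kx\cdot d}$, I would note that $\chi u^i \in C^\infty$ with $\chi \equiv 1$ on $\supp q$ gives a localisation in $H_{-1/2-\epsilon}^s$, and then apply Theorem \ref{thm:VBounded-MLmGWsSchroEqu2019} to control $q u^i$ in $H_{1/2+\epsilon}^{-s}$ a.s., followed by Theorem \ref{thm:RkBounded-MLmGWsSchroEqu2019} for $\Rk$. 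In either case the random constant produced by Theorem \ref{thm:VBounded-MLmGWsSchroEqu2019} is finite a.s., so the right-hand side is defined on an event of full probability.

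Next, under the hypothesis $\nrm[\mathcal{L}(H_{1/2+\epsilon}^{-s},H_{1/2+\epsilon}^{-s})]{\Rk q} =: \kappa < 1$, the geometric series
\begin{equation*}
(I - \Rk q)^{-1} := \sum_{j=0}^{\infty} (\Rk q)^j
\end{equation*}
converges in $\mathcal{L}(H_{1/2+\epsilon}^{-s}, H_{1/2+\epsilon}^{-s})$ with operator norm bounded by $(1-\kappa)^{-1}$. I would then define the mild solution
\begin{equation*}
u^{sc}(\cdot,\omega) := (I - \Rk q)^{-1}\bigl(\alpha \Rk q u^i - \Rk f\bigr),
\end{equation*}
which by construction satisfies \eqref{eq:uscDefn-MLmGWsSchroEqu2019} a.s. and enjoys the bound \eqref{eq:MildSolUnique-MLmGWsSchroEqu2019} with implicit constant $(1-\kappa)^{-1}$. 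Uniqueness is immediate from the injectivity of $I - \Rk q$ on $H_{1/2+\epsilon}^{-s}$, which follows from the contraction hypothesis: if $(I - \Rk q) v = 0$, then $\nrm[H_{1/2+\epsilon}^{-s}]{v} = \nrm[H_{1/2+\epsilon}^{-s}]{\Rk q\, v} \leq \kappa\, \nrm[H_{1/2+\epsilon}^{-s}]{v}$, forcing $v = 0$.

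The main obstacle, and the only really substantive point, is to track carefully the interplay between the two dual pairs of spaces arising in Theorems \ref{thm:RkBounded-MLmGWsSchroEqu2019} and \ref{thm:VBounded-MLmGWsSchroEqu2019}: $\Rk$ gains two units of weight and $2s$ units of smoothness, while multiplication by $q$ loses them in the opposite direction, so their composition stabilises the space on which the Neumann iteration is performed. I would pick the parameters $s \in (0,1/2)$ and $\epsilon > 0$ consistently so that the hypothesis $s > (n-m_q)/2$ of Theorem \ref{thm:VBounded-MLmGWsSchroEqu2019} is compatible with the smoothness range of Theorem \ref{thm:RkBounded-MLmGWsSchroEqu2019}; this is where the assumption on the rough order of $q$ enters. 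Once these parameter choices are fixed, all the required mapping properties are in place, and the remainder of the argument is the standard contraction/Neumann bookkeeping.
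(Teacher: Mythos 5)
Your proposal is correct and follows essentially the same route as the paper: both verify via Theorems \ref{thm:RkBounded-MLmGWsSchroEqu2019} and \ref{thm:VBounded-MLmGWsSchroEqu2019} (with $s$ chosen in $(\max\{(3-m_q)/2,0\},1/2)$) that the right-hand side $\alpha\Rk q u^i-\Rk f$ lies in $H_{1/2+\epsilon}^{-s}(\R^3)$ a.s., then invert $I-\Rk q$ by the Neumann series under the stated norm hypothesis and read off the bound and uniqueness. Your write-up is in fact somewhat more explicit than the paper's about the localisation of $u^i$ and the parameter bookkeeping, but the argument is the same.
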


\begin{proof}
	\sq{The condition \eqref{eq:mqmf-MLmGWsSchroEqu2019} implies $m_q > 2$, and hence there exists ${s \in (\max\{(3 - m_q)/2, 0\} , 1/2)}$ such that Theorem \ref{thm:RkBounded-MLmGWsSchroEqu2019} can apply.}
	By Theorems \ref{thm:RkBounded-MLmGWsSchroEqu2019} and \ref{thm:VBounded-MLmGWsSchroEqu2019}, we know
	\[
	F := \alpha \Rk q u^i - \Rk f \in H_{1/2+\epsilon}^{-s}(\R^3).
	\]
	From Theorems \ref{thm:RkBounded-MLmGWsSchroEqu2019} and \ref{thm:VBounded-MLmGWsSchroEqu2019}, we also know that the operator $I - \Rk q$ is invertible from $H_{1/2+\epsilon}^{-s}(\R^3)$ to itself, and the right-hand side of \eqref{eq:uscDefn-MLmGWsSchroEqu2019} belongs to $H_{1/2+\epsilon}^{-s}(\R^3)$.
	
	Let $u^{sc} := (I - \Rk q)^{-1} F \in H_{1/2+\epsilon}^{-s}(\R^3)$, then $u^{sc}$ fulfills the requirements of the theorem. 
	The existence of the solution is proved.
	\eqref{eq:MildSolUnique-MLmGWsSchroEqu2019} can be verified easily from Theorems \ref{thm:RkBounded-MLmGWsSchroEqu2019}, \ref{thm:VBounded-MLmGWsSchroEqu2019} and \eqref{eq:uscDefn-MLmGWsSchroEqu2019}.
	The uniqueness follows readily from \eqref{eq:MildSolUnique-MLmGWsSchroEqu2019}.
	
	The proof is complete. 
\end{proof}

\section{Asymptotic analysis of high-order terms} \label{sec:AsympHighOrder-MLmGWsSchroEqu2019}

We intend to recover $\mu_f$, $\mu_q$ from the data via the correlation formula of the following form 
\begin{equation} \label{eq:GenForm-MLmGWsSchroEqu2019}
\frac 1 K \int_K^{2K} k^m \overline{u^\infty(k,\omega)} u^\infty (k+\tau,\omega) \dif k,
\end{equation}
where $u^\infty(k,\omega)$ stands for the far-field pattern $u^\infty(\hat x,k,\omega) \in \mathcal M_f$ in the case of $\alpha = 0$ and stands for $u^\infty(\hat x,k,-\hat x,\omega) \in \mathcal M_q$ in the case of $\alpha = 1$.
The Lippmann-Schwinger equation corresponding to \eqref{eq:1-MLmGWsSchroEqu2019} is
\begin{equation} \label{eq:lsdual-MLmGWsSchroEqu2019}
(I - \Rk q)u^{sc}(k,\omega) = \alpha \Rk q u^i \sq{-} \Rk f.
\end{equation}
When $k$ is large enough such that $\nrm[\mathcal{L} (H_{-1/2 - \epsilon}^s,H_{-1/2 - \epsilon}^s)]{\Rk q} < 1$, from \eqref{eq:lsdual-MLmGWsSchroEqu2019} we obtain
\begin{align}
u^{sc}(k,\omega) & = \sq{-\sum_{j \geq 0} \Rk \big( (q \Rk)^j f \big) + \alpha \sum_{j \geq 0} \Rk \big( (q \Rk)^j qu^i \big)},  \label{eq:scattered-MLmGWsSchroEqu2019}\\
u^\infty(k,\omega) &= (4\pi)^{-1} \sum_{j=0,1,2} F_j(\hat x,k,\omega) + \alpha (4\pi)^{-1} \sum_{j=0,1,2} G_j(\hat x,k,\omega),  \label{eq:farfieldFG-MLmGWsSchroEqu2019}
\end{align}
where
\begin{equation} \label{eq:FjGjDef-MLmGWsSchroEqu2019}
\left\{\begin{aligned}
F_j(\hat x,k,\omega) & := \sq{-}\int_{\R^3} e^{-\textrm{i}k\hat x \cdot z} \big[ (q\Rk)^j f \big](z) \dif z, \quad j = 0,1 \\
F_2(\hat x,k,\omega) & := \sq{-} \sum_{j \geq 2} \int_{\R^3} e^{-\textrm{i}k\hat x \cdot z} \big[ (q\Rk)^j f \big](z) \dif z, \\
G_j(\hat x,k,d,\omega) & := \int_{\R^3} e^{-\textrm{i}k\hat x \cdot z} \sq{\big[ (q \Rk)^j q u^i \big](z)} \dif z, \quad j = 0,1 \\
G_2(\hat x,k,d,\omega) & := \sum_{j \geq 2} \int_{\R^3} e^{-\textrm{i}k\hat x \cdot z} \sq{\big[ (q \Rk)^j q u^i \big](z)} \dif z.
\end{aligned}\right.
\end{equation}
Substituting \eqref{eq:farfieldFG-MLmGWsSchroEqu2019} into \eqref{eq:GenForm-MLmGWsSchroEqu2019}, we obtain several crossover terms comprised by $F_j$ and $G_j$. 
To recover $\mu_f$ and $\mu_q$, it is necessary to establish the asymptotics of $F_j$ and $G_j$ in terms of $k$. 
The asymptotic analyses of $G_j\, (j=0,1,2)$ are established in \cite{caro2016inverse}.

This section is devoted to the asymptotic analysis of $F_1$ and $F_2$,
which are given in Lemmas \ref{lem:HOTF1F1-MLmGWsSchroEqu2019} and \ref{lem:HOTF2F2-MLmGWsSchroEqu2019}, respectively. 
These two lemmas shall play key roles in the proofs to Theorems \ref{thm:UniSource-MLmGWsSchroEqu2019} and \ref{thm:UniPot1-MLmGWsSchroEqu2019}.

\subsection{Asymptotics of $F_1$} \label{subsec:AsyF1-MLmGWsSchroEqu2019}

In order to establish the asymptotics of $F_1$, we need to derive two auxiliary lemmas. First, let us recall the notion of the fractional Laplacian \cite{pozrikidis2016fractional} of order 
$s \in (0,1)$ in $\mathbb{R}^n$ ($n\geq 3$),
\begin{equation} \label{eq:FracLapDef-MLmGWsSchroEqu2019}
(-\Delta)^{s/2} \varphi(x) := (2\pi)^{-n} \iint e^{{\textrm{i}}(x-y) \cdot \xi} |\xi|^s \varphi(y) \dif y \dif \xi,
\end{equation}
where the integration is defined as an oscillatory integral.
When $\varphi \in \scrS(\Rn)$, \eqref{eq:FracLapDef-MLmGWsSchroEqu2019} can be understood as a usual Lebesgue integral if one integrates w.r.t.~$y$ first and then integrates w.r.t.~$\xi$.
By duality arguments, the fractional Laplacian can be generalized to act on wider range of functions and distributions (cf. \cite{wong2014pdo}).
It can be verified that the fractional Laplacian is self-adjoint. 

In the following two lemmas, we present the results in a more general form where the space dimension $n$ can be arbitrary but greater than 2, though only the case $n=3$ shall be used subsequently. 

\begin{lem} \label{lem:FracExp-MLmGWsSchroEqu2019}
	For any $s \in (0,1)$, we have
	\begin{equation*}
	(-\Delta_\xi)^{s/2} (e^{{\textrm{i}}x \cdot \xi}) = |x|^s e^{{\textrm{i}}x \cdot \xi}
	\end{equation*}
	in the distributional sense.
\end{lem}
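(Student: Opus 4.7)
The plan is to interpret the claimed identity by duality: test both sides against an arbitrary $\phi \in \scrS(\Rn)$ in the $\xi$-variable, and use the self-adjointness of $(-\Delta)^{s/2}$ (noted in the paragraph preceding the lemma) to transfer the fractional Laplacian off the exponential and onto $\phi$. Since $e^{\mathrm{i}x\cdot\xi}$ is a tempered distribution in $\xi$ for each fixed $x \in \Rn$, the statement to verify reduces to
\[
\int_{\Rn} e^{\mathrm{i}x\cdot\xi}\,(-\Delta_\xi)^{s/2}\phi(\xi)\,\dif\xi \;=\; |x|^s \int_{\Rn} e^{\mathrm{i}x\cdot\xi}\,\phi(\xi)\,\dif\xi.
\]

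For the core computation, I would rewrite the definition \eqref{eq:FracLapDef-MLmGWsSchroEqu2019} in Fourier-multiplier form: for $\phi \in \scrS(\Rn)$ one has $(-\Delta)^{s/2}\phi = \calF^{-1}\bigl(|\eta|^s \widehat\phi(\eta)\bigr)$. Because $s \in (0,1)$ and $\widehat\phi \in \scrS(\Rn)$, the function $|\eta|^s \widehat\phi(\eta)$ lies in $L^1(\Rn)$ (it is integrable near $\eta=0$ since $s>0$ and has Schwartz decay at infinity), so the inverse Fourier transform is classical. Substituting and formally swapping the order of integration gives
\[
\int e^{\mathrm{i}x\cdot\xi}(-\Delta_\xi)^{s/2}\phi(\xi)\,\dif\xi
= (2\pi)^{-n/2}\iint e^{\mathrm{i}\xi\cdot(x+\eta)}|\eta|^s \widehat\phi(\eta)\,\dif\eta\,\dif\xi.
\]
The inner $\xi$-integral is, up to the normalization $(2\pi)^n$, the delta distribution $\delta(x+\eta)$, whose evaluation sifts out $|x|^s\widehat\phi(-x)$ and produces the prefactor $(2\pi)^{n/2}$. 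On the other side, the Fourier inversion formula immediately gives $\int e^{\mathrm{i}x\cdot\xi}\phi(\xi)\,\dif\xi = (2\pi)^{n/2}\widehat\phi(-x)$, and the two expressions match. Thus, modulo the rigorous interpretation of the $\xi$-integral, the identity follows.

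The main technical obstacle is precisely that the $\xi$-integral is not absolutely convergent, so the Fubini step must be justified rather than performed formally. I would handle this by a standard regularization, inserting either a smooth cutoff $\chi(\xi/R)$ using the function $\chi$ from \eqref{eq:cutoffFunc-MLmGWsSchroEqu2019}, or a Gaussian damping $e^{-\varepsilon|\xi|^2}$. With the regulator in place, all integrals are absolutely convergent, Fubini applies, and the inner $\xi$-integral becomes a classical approximation of identity $\rho_R(x+\eta)$ (or the Gaussian analog), which converges to $(2\pi)^n\delta(x+\eta)$ in $\scrS'(\Rn)$. Passing to the limit $R \to \infty$ (or $\varepsilon \to 0^+$) via dominated convergence, using the Schwartz decay of $|\eta|^s\widehat\phi(\eta)$ away from the origin, yields the desired equality; the boundary behavior at $\eta = 0$ is harmless because $|\eta|^s$ is locally integrable for $s > 0$. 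This completes the distributional identification $(-\Delta_\xi)^{s/2}(e^{\mathrm{i}x\cdot\xi}) = |x|^s e^{\mathrm{i}x\cdot\xi}$.
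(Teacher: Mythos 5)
Your proposal is correct and follows essentially the same route as the paper: both pass to the dual pairing with a Schwartz test function via the self-adjointness of $(-\Delta_\xi)^{s/2}$ and then carry out the Fourier computation, the only difference being that you keep $|\eta|^s$ as a multiplier on $\widehat\phi$ and sift with a delta, whereas the paper writes the same exchange through the kernel $\calF^{-1}\{|\cdot|^s\}$ and invokes $\calF\calF^{-1}=I$. Your explicit regularization of the non-absolutely-convergent $\xi$-integral is a welcome tightening of the step the paper dispatches by declaring the integrals to be oscillatory integrals.
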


\begin{proof}
	For any $\varphi \in \scrS(\R^n)$, because $(-\Delta_\xi)^{s/2}$ is self-adjoint, we have
	\begin{align*}
	\big( (-\Delta_\xi)^{s/2} (e^{{\textrm{i}}x \cdot \xi}), \varphi(\xi) \big)
	& = \big( e^{{\textrm{i}}x \cdot \xi}, (-\Delta_\xi)^{s/2} \varphi(\xi) \big) \\
	& = \int e^{{\textrm{i}}x \cdot \xi} \cdot \big[ (2\pi)^{-n} \iint e^{{\textrm{i}}(\xi-y) \cdot \eta} |\eta|^s \varphi(y) \dif y \dif \eta \big] \dif \xi \\
	& = \int e^{{\textrm{i}}x \cdot \xi} \cdot \big\{ (2\pi)^{-n/2} \int \big[ (2\pi)^{-n/2} \int e^{{\textrm{i}}(\xi-y) \cdot \eta} |\eta|^s \dif \eta \big] \varphi(y) \dif y \big\} \dif \xi \\
	& = (2\pi)^{-n/2} \int e^{{\textrm{i}}x \cdot \xi} \cdot \int \calF^{-1}\{ |\cdot|^s \}(\xi-y) \cdot \varphi(y) \dif y \dif \xi \\
	& = (2\pi)^{-n/2} \iint e^{{\textrm{i}}x \cdot \xi} \calF^{-1}\{ |\cdot|^s \}(\xi-y) \cdot \varphi(y) \dif y \dif \xi \\
	& = \int \big[ (2\pi)^{-n/2} \int e^{{\textrm{i}}x \cdot \xi} \calF^{-1}\{ |\cdot|^s \}(\xi-y) \dif \xi \big] \cdot \varphi(y) \dif y \\
	& = \int e^{{\textrm{i}}x \cdot y} \big[ (2\pi)^{-n/2} \int e^{-{\textrm{i}}(-x) \cdot \xi} \calF^{-1} \{ |\cdot|^s \}(\xi) \dif \xi \big] \cdot \varphi(y) \dif y \\
	& = \int e^{{\textrm{i}}x \cdot y} \calF \calF^{-1} \{ |\cdot|^s \}(-x) \cdot \varphi(y) \dif y \\
	& = \int |x|^s e^{{\textrm{i}}x \cdot y} \cdot \varphi(y) \dif y \\
	& = \big( |x|^s e^{{\textrm{i}}x \cdot \xi}, \varphi(\xi) \big).
	\end{align*}
	It is noted that in the derivation above, some integrals should be understood as oscillatory integrals.
\end{proof}

\begin{lem} \label{lem:FracSymbol-MLmGWsSchroEqu2019}
	For any $m < 0$, $s \in (0,1)$ and $c(x,\xi) \in S^m$, we have
	\[
	|\big( (-\Delta_\xi)^{s/2} c \big) (x,\xi)| \leq C \agl[\xi]^{m-s},
	\]
	where the constant $C$ is independent of $x$, $\xi$.
\end{lem}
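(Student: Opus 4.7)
The plan is to establish the bound via the singular-integral (Riesz) representation of the fractional Laplacian. For $s \in (0,1)$ and sufficiently nice $u(\xi)$, we have
\[
(-\Delta_\xi)^{s/2} u(\xi) = C_{n,s}\, \mathrm{p.v.}\int_{\Rn} \frac{u(\xi) - u(\eta)}{|\xi - \eta|^{n+s}} \dif \eta.
\]
Since $c(x,\cdot) \in C^\infty$ with $|D^\alpha_\xi c(x,\xi)| \lesssim \agl[\xi]^{m-|\alpha|}$ (in particular $c(x,\cdot) \in L^\infty_\xi$ when $m<0$), this representation applies pointwise in $\xi$ and the principal value is well-defined. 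It suffices to bound the right-hand side uniformly in $x$, so I will fix $x$ and write $c(\xi) := c(x,\xi)$.

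The next step is to decompose the integration domain into a near-diagonal region $A_1 := \{|\eta - \xi| \leq \agl[\xi]/2\}$ and a far region $A_2 := \{|\eta - \xi| > \agl[\xi]/2\}$. On $A_1$, Taylor-expand to second order: by symmetry under $\eta - \xi \mapsto -(\eta - \xi)$, the first-order term vanishes as a principal value, so
\[
\Bigl| \int_{A_1} \frac{c(\xi) - c(\eta)}{|\xi - \eta|^{n+s}} \dif \eta \Bigr|
\lesssim \int_{A_1} \frac{|\eta - \xi|^2 \sup_{|\zeta - \xi| \leq \agl[\xi]/2} |D_\xi^2 c(\zeta)|}{|\xi - \eta|^{n+s}} \dif \eta
\lesssim \agl[\xi]^{m-2} \int_0^{\agl[\xi]/2} r^{1-s} \dif r,
\]
which is $\lesssim \agl[\xi]^{m-s}$, using the symbol bound $|D_\xi^2 c(\zeta)| \lesssim \agl[\zeta]^{m-2} \simeq \agl[\xi]^{m-2}$ on $A_1$ (since $\agl[\zeta] \simeq \agl[\xi]$ there).

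On $A_2$, I will estimate the two pieces separately. The $c(\xi)$ piece is immediate:
\[
|c(\xi)|\int_{A_2} \frac{\dif \eta}{|\xi-\eta|^{n+s}}
\lesssim \agl[\xi]^m \int_{\agl[\xi]/2}^\infty r^{-1-s} \dif r
\lesssim \agl[\xi]^{m-s}.
\]
For the $c(\eta)$ piece, split $A_2$ further by whether $|\eta| \leq 2\agl[\xi]$ or $|\eta| > 2\agl[\xi]$. On the bounded part, $|c(\eta)| \lesssim 1$ and the previous bound applies. On the unbounded part, $|\xi - \eta| \simeq |\eta| \simeq \agl[\eta]$, hence
\[
\int_{|\eta| > 2\agl[\xi]} \frac{|c(\eta)|}{|\xi - \eta|^{n+s}} \dif \eta
\lesssim \int_{2\agl[\xi]}^\infty r^{m-s-1} \dif r,
\]
which converges at infinity since $m - s < 0$ and equals $C\agl[\xi]^{m-s}$ (the positivity of $-(m-s)$ is the only place the sign of $m$ is used, combined with $s>0$).

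The main technical point to be careful with is the principal-value cancellation in the near region; if I instead tried to estimate $|c(\xi)-c(\eta)|$ naively by the mean value theorem I would only get a bound of order $\agl[\xi]^{m-1} |\eta - \xi|$, producing an integral $\int_0^{\agl[\xi]/2} r^{-s}\dif r$ which still converges and yields $\agl[\xi]^{m-s}$ as well. So in fact the first-order Taylor argument is optional, and the proof proceeds cleanly either way. Collecting the three contributions gives $|((-\Delta_\xi)^{s/2} c)(x,\xi)| \lesssim \agl[\xi]^{m-s}$ with a constant independent of $x$, which is the desired estimate.
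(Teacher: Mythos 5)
Your route through the Riesz (singular--integral) representation of $(-\Delta_\xi)^{s/2}$ is genuinely different from the paper's proof, which rescales $\xi$, inserts cutoffs near and far from $\hat\xi$, and repeatedly integrates by parts in oscillatory integrals; your near-diagonal estimate on $A_1$, the $c(\xi)$-piece on $A_2$, and the region $\{|\eta|>2\agl[\xi]\}$ are all handled correctly and do yield $\agl[\xi]^{m-s}$. The gap is in the remaining piece $A_2\cap\{|\eta|\le 2\agl[\xi]\}$: bounding $|c(\eta)|\lesssim 1$ and invoking ``the previous bound'' gives only $\int_{A_2}|\xi-\eta|^{-n-s}\dif\eta\lesssim\agl[\xi]^{-s}$, and since $m<0$ this is strictly weaker than the target $\agl[\xi]^{m-s}=\agl[\xi]^{m}\agl[\xi]^{-s}$ for large $|\xi|$. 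To close it you must keep the decay of $c$: on this region $|\xi-\eta|\gtrsim\agl[\xi]$, so the contribution is
\[
\lesssim \agl[\xi]^{-n-s}\int_{|\eta|\le 2\agl[\xi]}\agl[\eta]^{m}\dif\eta \;\lesssim\; \agl[\xi]^{-n-s}\,\agl[\xi]^{n+m}=\agl[\xi]^{m-s},
\]
which works precisely when $m>-n$.

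That repair exposes a genuine restriction rather than a defect of your method: for $m\le -n$ the inner integral is $O(\log\agl[\xi])$ or $O(1)$ and this piece is only $O(\agl[\xi]^{-n-s})$, which dominates $\agl[\xi]^{m-s}$. This is sharp --- already for $c(\xi)=e^{-|\xi|^2}\in\bigcap_m S^m$ one has $(-\Delta_\xi)^{s/2}c(\xi)\simeq |\xi|^{-n-s}$ at infinity, because the multiplier $|\zeta|^s$ is non-smooth at $\zeta=0$ --- so the bound as stated cannot hold for all $m<0$; the correct uniform statement is $\agl[\xi]^{\max(m,-n)-s}$ (with a logarithm at $m=-n$). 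The paper's subsequent use of the lemma only requires integrability of $\agl[\eta]^{m-s}$ over $\R^3$, which the corrected bound still supplies, but as written your bounded-region step does not establish the claimed estimate even in the safe range $-n<m<0$. A minor additional point: you should justify in one line that the principal-value formula agrees with the Fourier/oscillatory-integral definition used in the paper for a bounded $C^2$ symbol in $S^m$ (standard, but it is the definitional bridge on which the whole argument rests).
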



\begin{proof}
	The proof is divided into two steps.
	
	\smallskip
	
	\noindent \textbf{Step 1}: The case $|\xi| \geq 1$.
	
	\medskip
	
	In this step, we set $|\xi|$ to be greater than 1.
	By the definition \eqref{eq:FracLapDef-MLmGWsSchroEqu2019}, we have
	\begin{align}
	\big( (-\Delta_\xi)^{s/2} c \big) (x,\xi)
	& \simeq \iint e^{{\textrm{i}}(\xi - \eta) \cdot \gamma} |\gamma|^s\, c(x,\eta) \dif \eta \dif \gamma \nonumber\\
	& = \iint e^{-{\textrm{i}}\eta \cdot \gamma} |\gamma|^s\, c(x,\eta + \xi) \dif \eta \dif \gamma \nonumber\\
	& = \iint e^{-{\textrm{i}}\eta \cdot \gamma} \big| \frac \gamma {|\xi|} \big|^s\,  c(x,|\xi|\eta + \xi) \dif (|\xi|\eta) \dif (\gamma/|\xi|) \nonumber\\
	& \simeq |\xi|^{-s} \iint e^{-{\textrm{i}}\eta \cdot \gamma} |\gamma|^s\,  c(x,|\xi|(\eta + \hat \xi)) \dif \eta \dif \gamma, \label{eq:FracSymbolInter1-MLmGWsSchroEqu2019}
	\end{align}
	where $\hat \xi = \xi / |\xi|$. 
	Fix a function $\chi_0 \in C_c^\infty(\R)$ with
	$\chi_0(|x|) \equiv 1$ when $1/2 \leq |x| \leq 3/2$
	and
	$\chi_0(|x|) \equiv 1$ when $|x| \leq 0$ or $|x| \geq 2$.
	We can continue \eqref{eq:FracSymbolInter1-MLmGWsSchroEqu2019} as
	\begin{align}
	\big( (-\Delta_\xi)^{s/2} c \big) (x,\xi)
	& \simeq |\xi|^{m-s} \iint e^{-{\textrm{i}}\eta \cdot \gamma} \chi_0(|\eta|) |\gamma|^s\, c(x,|\xi|(\eta + \hat \xi))\, |\xi|^{-m} \dif \eta \dif \gamma \nonumber\\
	& \ \ \ + |\xi|^{m-s} \iint e^{-{\textrm{i}}\eta \cdot \gamma} \big( 1 - \chi_0(|\eta|)\big)\, |\gamma|^s\, c(x,|\xi|(\eta + \hat \xi))\, |\xi|^{-m} \dif \eta \dif \gamma \nonumber\\
	& := |\xi|^{m-s} (\mathcal B_1 + \mathcal B_2). \label{eq:FracSymbolInter2-MLmGWsSchroEqu2019}
	\end{align}
	We estimate $\mathcal B_1$, $\mathcal B_2$ seperately. For $\mathcal B_1$, one can compute
	\begin{align}
	\mathcal B_1
	& = \iint e^{-{\textrm{i}}(\eta - \hat \xi) \cdot \gamma} \chi_0(|\eta - \hat \xi|) |\gamma|^s\, c(x,|\xi|\eta)\, |\xi|^{-m} \dif \eta \dif \gamma \nonumber\\
	& = \int e^{{\textrm{i}}\hat \xi \cdot \gamma} |\gamma|^s \big( \int e^{-{\textrm{i}}\eta \cdot \gamma} \chi_0(|\eta - \hat \xi|)\, c(x,|\xi|\eta)\, |\xi|^{-m} \dif \eta \big) \dif \gamma \nonumber\\
	& =: \int e^{{\textrm{i}}\hat \xi \cdot \gamma} |\gamma|^s J(\gamma;|\xi|,x) \dif \gamma, \label{eq:FracSymbolB11-MLmGWsSchroEqu2019}
	\end{align}
	where
	\(
	J(\gamma;|\xi|,x) = \int e^{-{\textrm{i}}\eta \cdot \gamma} \chi_0(|\eta - \hat \xi|)\, c(x,|\xi|\eta)\, |\xi|^{-m} \dif \eta.
	\)
	We claim that the $J(\gamma;|\xi|,x)$ is rapidly decaying w.r.t.~$|\gamma|$, that is
	\begin{equation} \label{eq:FracSymbolB14-MLmGWsSchroEqu2019}
	\forall N \in \mathbb N, \quad
	|\gamma|^{2N}\, |J(\gamma;|\xi|,x)| \leq C_N < +\infty, 
	\end{equation}
	for some constant $C_N$ independent of $\gamma$, $\xi$ and $x$.
	This can be seen from
	\begin{align}
	|\gamma|^{2N}\, |J(\gamma;|\xi|,x)|
	& \simeq \big| \int \Delta_\eta^N (e^{-{\textrm{i}}\eta \cdot \gamma}) \cdot \chi_0(|\eta - \hat \xi|)\, c(x,|\xi|\eta)\, |\xi|^{-m} \dif \eta \big| \nonumber\\
	& = \big| \int e^{-{\textrm{i}}\eta \cdot \gamma} \cdot \Delta_\eta^N \big( \chi_0(|\eta - \hat \xi|)\, c(x,|\xi|\eta) \big)\, |\xi|^{-m} \dif \eta \big| \nonumber\\
	& \leq \int_{\frac 1 2 \leq |\eta - \hat \xi| \leq 2} |\Delta_\eta^N \big( \chi_0(|\eta - \hat \xi|)\, c(x,|\xi|\eta)| \cdot |\xi|^{-m} \dif \eta \nonumber\\
	& \lesssim \int_{\frac 1 2 \leq |\eta - \hat \xi| \leq 2} \sum_{|\alpha| \leq 2N} |(\partial_\xi^\alpha c) (x,|\xi|\eta)| \cdot |\xi|^{|\alpha|-m} \dif \eta \nonumber\\
	& \lesssim \sum_{|\alpha| \leq 2N} \int_{\frac 1 2 \leq |\eta - \hat \xi| \leq 2} (1 + |\xi|\,|\eta|)^{m - |\alpha|} \cdot |\xi|^{|\alpha|-m} \dif \eta \nonumber\\
	& = \sum_{|\alpha| \leq 2N} \int_{\frac 1 2 \leq |\eta - \hat \xi| \leq 2} (|\xi|^{-1} + |\eta|)^{m - |\alpha|} \dif \eta,
	\label{eq:FracSymbolB12-MLmGWsSchroEqu2019}
	\end{align}
	where $N$ is an arbitrary non-negative integer. The condition $|\xi| \geq 1$ gives
	\begin{equation} \label{eq:FracSymbolB13-MLmGWsSchroEqu2019}
	(|\xi|^{-1} + |\eta|)^{m - |\alpha|} \leq \begin{cases}
	(1 + |\eta|)^{m - |\alpha|}, &\mbox{when}~ |\alpha| \leq m, \\
	|\eta|^{m - |\alpha|}, &\mbox{when}~ |\alpha| > m.
	\end{cases}
	\end{equation}
	By \eqref{eq:FracSymbolB12-MLmGWsSchroEqu2019} and \eqref{eq:FracSymbolB13-MLmGWsSchroEqu2019}, we obtain \eqref{eq:FracSymbolB14-MLmGWsSchroEqu2019}.
	Therefore, $J(\gamma;|\xi|,x)$ is indeed rapidly decaying.
	Now, combining \eqref{eq:FracSymbolB11-MLmGWsSchroEqu2019} and \eqref{eq:FracSymbolB14-MLmGWsSchroEqu2019}, we arrive at
	\begin{equation} \label{eq:FracSymbolB1-MLmGWsSchroEqu2019}
	|\mathcal B_1|
	\lesssim \int_{|\gamma| \geq 1} |\gamma|^s  \dif \gamma
	+
	\int_{|\gamma| > 1} |\gamma|^s |\gamma|^{-4} \dif \gamma 
	\leq C < +\infty, 
	\end{equation}
	for some constant $C$ independent of $x$, $\xi$.
	
	To estimate $\mathcal B_2$, 
	we split $\mathcal B_2$ into two terms, say, $\mathcal B_{21}$ and $\mathcal B_{22}$, in the following way,
	\begin{align}
	\mathcal B_2
	& = \iint_{\gamma \leq 1} e^{-\textrm{i}\eta \cdot \gamma} \big( 1 - \chi_0(|\eta|)\big)\, |\gamma|^s\, c(x,|\xi|(\eta + \hat \xi))\, |\xi|^{-m} \dif \eta \dif \gamma \nonumber\\
	& \ \ \ + \iint_{\gamma > 1} e^{-\textrm{i}\eta \cdot \gamma} \big( 1 - \chi_0(|\eta|)\big)\, |\gamma|^s\, c(x,|\xi|(\eta + \hat \xi))\, |\xi|^{-m} \dif \eta \dif \gamma \nonumber\\
	& =: \mathcal B_{21} + \mathcal B_{22}. \label{eq:FracSymbolB2Split-MLmGWsSchroEqu2019}
	\end{align}
	Define the differential operator $L := (\gamma/|\gamma|^2) \cdot \nabla_{\eta}$.
	The term $\mathcal B_{21}$ can be estimated as follows,
	\begin{align}
	|\mathcal B_{21}|
	& \leq \int_{|\gamma| \leq 1} |\gamma|^s \cdot \big| \int e^{-\textrm{i}\eta \cdot \gamma} \big( 1 - \chi_0(|\eta|)\big)\, c(x,|\xi|(\eta + \hat \xi))\, |\xi|^{-m} \dif \eta \big| \dif \gamma \nonumber\\
	& \simeq \int_{|\gamma| \leq 1} |\gamma|^{s} \cdot \big| \int L^{n} (e^{-\textrm{i}\eta \cdot \gamma})\, \big( 1 - \chi_0(|\eta|)\big)\, c(x,|\xi|(\eta + \hat \xi))\, |\xi|^{-m} \dif \eta \big| \dif \gamma \nonumber\\
	& \lesssim \int_{|\gamma| \leq 1} |\gamma|^{s} |\gamma|^{-n} \cdot \big| \int e^{-\textrm{i}\eta \cdot \gamma}\, \nabla_{\eta}^{n} \Big( \big( 1 - \chi_0(|\eta|)\big)\, c(x,|\xi|(\eta + \hat \xi)) \Big)\, |\xi|^{-m} \dif \eta \big| \dif \gamma \nonumber\\
	& \leq \int_{|\gamma| \leq 1} |\gamma|^{s-n} \int \Big| \nabla_\eta^{n} \Big( \big( 1 - \chi_0(|\eta|)\big)\, c(x,|\xi|(\eta + \hat \xi)) \Big) \Big| \cdot |\xi|^{-m} \dif \eta \dif \gamma \nonumber\\
	& \lesssim \int_{|\gamma| \leq 1} |\gamma|^{s-n} \int_{|\eta| \not\in (\frac 1 2, \frac 3 2)} (1+|\xi| \cdot |\eta + \hat \xi|)^{m-n} \cdot |\xi|^{n-m} \dif \eta \dif \gamma \nonumber\\
	& = \int_{|\gamma| \leq 1} |\gamma|^{s-n} \int_{|\eta| \not\in (\frac 1 2, \frac 3 2)} (|\xi|^{-1} + |\eta + \hat \xi|)^{m-n} \dif \eta \dif \gamma \nonumber\\
	& \leq \int_{|\gamma| \leq 1} |\gamma|^{s-n} \int_{|\eta| \not\in (\frac 1 2, \frac 3 2)} |\eta + \hat \xi|^{m-n} \dif \eta \dif \gamma \nonumber\\
	& \leq C < +\infty, \label{eq:FracSymbolB21-MLmGWsSchroEqu2019}
	\end{align}
	for some constant $C$ independent of $x$, $\xi$. Here, it is noted that in \eqref{eq:FracSymbolB21-MLmGWsSchroEqu2019} $n$ is the space dimension. 
	The last two inequalities in \eqref{eq:FracSymbolB21-MLmGWsSchroEqu2019} make use of the following three facts:
	$s - n > -n$,
	$m - n < -n$, and 
	{the restriction $|\eta| \not\in (1/2, 3/2)$ that makes $|\eta+\hat\xi|\geq 1/2$.}
	
	To estimate $\mathcal B_{22}$, we proceed in a way similar to \eqref{eq:FracSymbolB21-MLmGWsSchroEqu2019}, 
	but replacing $L^{n}$ with $L^{n+1}$,
	\begin{align}
	|\mathcal B_{22}|
	& \lesssim \int_{|\gamma| > 1} |\gamma|^{s-1-n} \int \Big| \nabla_\eta^{n+1} \Big( \big( 1 - \chi_0(|\eta|)\big)\, c(x,|\xi|(\eta + \hat \xi)) \Big) \Big| \cdot |\xi|^{-m} \dif \eta \dif \gamma \nonumber\\
	& \lesssim \int_{|\gamma| > 1} |\gamma|^{s-1-n} \int_{|\eta| \not\in (\frac 1 2, \frac 3 2)} (|\xi|^{-1} + |\eta + \hat \xi|)^{m-1-n} \dif \eta \dif \gamma \nonumber\\
	& \leq \int_{|\gamma| > 1} |\gamma|^{s-1-n} \int_{|\eta| \not\in (\frac 1 2, \frac 3 2)} |\eta + \hat \xi|^{m-1-n} \dif \eta \dif \gamma \nonumber\\
	& \leq C < +\infty, \label{eq:FracSymbolB22-MLmGWsSchroEqu2019}
	\end{align}
	for some constant $C$ independent of $x$, $\xi$. 
	Also, the last two inequality in \eqref{eq:FracSymbolB22-MLmGWsSchroEqu2019} take advantage of the following three facts:
	$s - 1 - n < -n$,
	$m - 1 - n < -n$, and 
	the restriction $|\eta| \not\in (1/2, 3/2)$ that makes $|\eta+\hat\xi| \geq 1/2$.
	
	Finally, by \eqref{eq:FracSymbolInter2-MLmGWsSchroEqu2019}, \eqref{eq:FracSymbolB1-MLmGWsSchroEqu2019}, \eqref{eq:FracSymbolB2Split-MLmGWsSchroEqu2019}, \eqref{eq:FracSymbolB21-MLmGWsSchroEqu2019} and \eqref{eq:FracSymbolB22-MLmGWsSchroEqu2019}, we arrive at
	\begin{equation} \label{eq:FracSymbolGt1-MLmGWsSchroEqu2019}
	|\big( (-\Delta_\xi)^{s/2} c \big) (x,\xi)| \leq C |\xi|^{m-s},
	\quad \text{for all} \quad
	|\xi| \geq 1.
	\end{equation}

	\smallskip
	
	\noindent \textbf{Step 2}: The case $|\xi| < 1$.
	
	\medskip
	
	In this step, $|\xi|$ is set to be smaller than 1. We differentiate $\big( (-\Delta_\xi)^{s/2} c \big) (x,\xi)$ formally w.r.t.~$\xi$, and follow the arguments similar to \eqref{eq:FracSymbolB21-MLmGWsSchroEqu2019}-\eqref{eq:FracSymbolB22-MLmGWsSchroEqu2019},
	\begin{align}
	|\partial_{\xi_j} \big( (-\Delta_\xi)^{s/2} c \big) (x,\xi)|
	& \simeq |\partial_{\xi_j} \iint e^{\textrm{i}(\xi - \eta) \cdot \gamma} |\gamma|^s\, c(x,\eta) \dif \eta \dif \gamma| \nonumber\\
	& \lesssim |\iint_{|\gamma| \leq 1} L^{1+n} (e^{\textrm{i}(\xi - \eta) \cdot \gamma}) |\gamma|^s \gamma_j\, c(x,\eta) \dif \eta \dif \gamma| \nonumber\\
	& \ \ \ + |\iint_{|\gamma| > 1} L^{2 +n} (e^{\textrm{i}(\xi - \eta) \cdot \gamma}) |\gamma|^s \gamma_j\, c(x,\eta) \dif \eta \dif \gamma| \nonumber\\
	& \lesssim \int_{|\gamma| \leq 1} |\gamma|^{s-n} \int \agl[\eta]^{m-1-n} \dif \eta \dif \gamma \nonumber\\
	& \ \ \ + \int_{|\gamma| > 1} |\gamma|^{s-1-n} \int \agl[\eta]^{m-2-n} \dif \eta \dif \gamma \nonumber\\
	& \leq C < +\infty, \label{eq:FracSymbolLt1Inter1-MLmGWsSchroEqu2019}
	\end{align}
	where the constant $C$ is independent of $x$ and $\xi$.
	Therefore, $\big( (-\Delta_\xi)^{s/2} c \big) (x,\xi)$ is continuous w.r.t.~$\xi$ in $\mathbb{R}^n$. 
	Moreover, the gradient w.r.t.~$x$ and $\xi$ is bounded. 
	Therefore, $\big( (-\Delta_\xi)^{s/2} c \big) (x,\xi)$ is uniformly bounded for all $x \in \Rn$ and all $|\xi| \leq 1$. 
	Combining this with \eqref{eq:FracSymbolGt1-MLmGWsSchroEqu2019}, we arrive at the conclusion.
	
	The proof is complete. 
\end{proof}

By the commutability between $(-\Delta_\xi)^{s/2}$ and differential operators, we can readily obtain the following corollary.

\begin{cor} \label{cor:FracSymbol-MLmGWsSchroEqu2019}
	For any $m < 0$ and $s \in (0,1)$, we have
	\[
	\big( (-\Delta_\xi)^{s/2} c \big) (x,\xi) \in S^{m-s}
	\quad \text{for any} \quad
	c(x,\xi) \in S^m.
	\]
\end{cor}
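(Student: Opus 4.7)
The plan is to reduce the claim to Lemma~\ref{lem:FracSymbol-MLmGWsSchroEqu2019} by exploiting the fact that $(-\Delta_\xi)^{s/2}$, being a Fourier multiplier in the $\xi$-variable with symbol $|\gamma|^s$, commutes with every partial differential operator $D_x^\alpha D_\xi^\beta$. Recalling the definition of $S^{m-s}$, what must be verified is that for every pair of multi-indices $(\alpha,\beta)$ there exists a constant $C_{\alpha,\beta}$ such that
\begin{equation*}
\big| D_x^\alpha D_\xi^\beta \big( (-\Delta_\xi)^{s/2} c \big)(x,\xi) \big| \leq C_{\alpha,\beta} (1+|\xi|)^{m-s-|\beta|}, \quad \forall\, x,\xi \in \mathbb{R}^n.
\end{equation*}

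First I would establish the commutation identity
\begin{equation*}
D_x^\alpha D_\xi^\beta \big( (-\Delta_\xi)^{s/2} c \big)(x,\xi) = (-\Delta_\xi)^{s/2} \big( D_x^\alpha D_\xi^\beta c \big)(x,\xi),
\end{equation*}
which follows directly from the oscillatory-integral definition \eqref{eq:FracLapDef-MLmGWsSchroEqu2019}: the $x$-derivatives pass inside the $\eta$-integral without interaction, and the $\xi$-derivatives can be moved under the integral sign and transferred to the phase factor $e^{\mathrm{i}(\xi-\eta)\cdot\gamma}$, after which integration by parts against $\eta$ returns them onto $c(x,\eta)$. This is the standard manipulation for Fourier multipliers acting on symbols and just needs a brief justification via the oscillatory-integral framework.

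Next, since $c \in S^m$, a direct differentiation shows $D_x^\alpha D_\xi^\beta c \in S^{m-|\beta|}$. Because $m<0$, we also have $m-|\beta|<0$ for every $\beta$, so Lemma~\ref{lem:FracSymbol-MLmGWsSchroEqu2019} applies to the symbol $D_x^\alpha D_\xi^\beta c$ with order $m-|\beta|$ and yields
\begin{equation*}
\big| (-\Delta_\xi)^{s/2} \big( D_x^\alpha D_\xi^\beta c \big)(x,\xi) \big| \leq C_{\alpha,\beta} \langle \xi \rangle^{m-|\beta|-s},
\end{equation*}
which combined with the commutation identity gives exactly the required $S^{m-s}$ estimate.

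There is no real analytic obstacle here; the only care is in justifying the commutation rigorously, since \eqref{eq:FracLapDef-MLmGWsSchroEqu2019} is not an absolutely convergent integral. The most technical point is to confirm that the oscillatory-integral manipulations used in the proof of Lemma~\ref{lem:FracSymbol-MLmGWsSchroEqu2019} (splitting into $|\xi|\geq 1$ and $|\xi|<1$, and using the differential operator $L = (\gamma/|\gamma|^2)\cdot \nabla_\eta$ to produce decay in $\gamma$) remain valid when applied to $D_x^\alpha D_\xi^\beta c$ in place of $c$; this is immediate because that proof only uses the symbol bounds of the input, and the input $D_x^\alpha D_\xi^\beta c$ still enjoys symbol bounds of the right order. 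Hence the corollary follows.
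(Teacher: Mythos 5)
Your proposal is correct and follows essentially the same route as the paper: establish that $(-\Delta_\xi)^{s/2}$ commutes with $\partial_x^\alpha\partial_\xi^\beta$ via the Fourier-multiplier structure of the oscillatory integral, then apply Lemma~\ref{lem:FracSymbol-MLmGWsSchroEqu2019} to $\partial_x^\alpha\partial_\xi^\beta c \in S^{m-|\beta|}$. Your explicit remark that $m<0$ guarantees $m-|\beta|<0$, so the lemma's hypothesis is met for every $\beta$, is a small but worthwhile point that the paper leaves implicit.
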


\begin{proof}
	Write $\tilde c(x,\xi) = (-\Delta_\xi)^{s/2} c(x,\xi)$. Then
	\begin{align*}
	\partial_x^\alpha \partial_\xi^\beta \tilde c(x,\xi)
	& \simeq \partial_x^\alpha \partial_\xi^\beta \iint e^{\textrm{i}(\xi - \eta) \cdot \gamma} |\gamma|^s\, c(x,\eta) \dif \eta \dif \gamma \nonumber\\
	& \simeq \partial_x^\alpha \partial_\xi^\beta \int e^{\textrm{i}\xi \cdot \gamma} |\gamma|^s\, \calF_{\xi \to \gamma} \{c\}(x,\gamma) \dif \gamma \nonumber\\
	& \simeq \partial_x^\alpha \int e^{\textrm{i}\xi \cdot \gamma} |\gamma|^s\, \gamma^\beta \calF_{\xi \to \gamma} \{c\}(x,\gamma) \dif \gamma \nonumber\\
	& \simeq \partial_x^\alpha \int e^{\textrm{i}\xi \cdot \gamma} |\gamma|^s\, \calF_{\xi \to \gamma} \{\partial_\xi^\beta(c)\}(x,\gamma) \dif \gamma \nonumber\\
	& \simeq \partial_x^\alpha \iint e^{\textrm{i}(\xi - \eta) \cdot \gamma} |\gamma|^s\, (\partial_\xi^\beta c)(x,\eta) \dif \eta \dif \gamma \nonumber\\
	& = \iint e^{\textrm{i}(\xi - \eta) \cdot \gamma} |\gamma|^s\, (\partial_x^\alpha \partial_\xi^\beta c)(x,\eta) \dif \eta \dif \gamma \nonumber\\
	& = \big( (-\Delta_\xi)^{s/2} (\partial_x^\alpha \partial_\xi^\beta c) \big) (x,\xi).
	\end{align*}
	Applying Lemma \ref{lem:FracSymbol-MLmGWsSchroEqu2019}, we obtain
	\[
	|\partial_x^\alpha \partial_\xi^\beta \tilde c(x,\xi)| \leq C_{\alpha,\beta} \agl[\xi]^\beta.
	\]
	The proof is complete.
\end{proof}

\bigskip

Recall the definition of the unit normal vector $\boldsymbol{n}$ {after \eqref{eq:fqSeparation-MLmGWsSchroEqu2019}.}
The asymptotic estimate associated with the term $F_1$ is established in the following lemma.

\begin{lem} \label{lem:HOTF1F1-MLmGWsSchroEqu2019}
	We have
	\begin{equation} \label{eq:hotF1F1-MLmGWsSchroEqu2019}
	\mathbb{E} (|F_1(\hat x,k,\cdot)|^2) \leq C k^{-4},\quad \Forall k > 1, 
	\end{equation}
	for all $\hat x$ with $\hat x \cdot \boldsymbol n \geq 0$,
	and the constant $C$ in \eqref{eq:hotF1F1-MLmGWsSchroEqu2019} is independent of $\hat x$, $k$.
\end{lem}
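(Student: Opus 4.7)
The plan is to compute $\mathbb{E}|F_1|^2$ explicitly using independence of $q$ and $f$ together with the covariance-operator formalism of Section \ref{subsec:RandomModel-MLmGWsSchroEqu2019}, then extract $k^{-4}$ decay from the resulting oscillatory integral by combining spatial integration by parts with the fractional-Laplacian pseudodifferential calculus developed in Lemmas \ref{lem:FracExp-MLmGWsSchroEqu2019}, \ref{lem:FracSymbol-MLmGWsSchroEqu2019} and Corollary \ref{cor:FracSymbol-MLmGWsSchroEqu2019}.

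Starting from $F_1=-\int e^{-\mathrm{i}k\hat x\cdot z}q(z)(\mathcal{R}_k f)(z)\,dz$, the defining property of $\mathfrak{C}_q$ together with the independence of $q$ and $f$ gives
$$\mathbb{E}|F_1|^2=\mathbb{E}_f\iint K_q(z,z')\,e^{\mathrm{i}k\hat x\cdot(z-z')}\,\overline{(\mathcal{R}_k f)(z)}\,(\mathcal{R}_k f)(z')\,dz\,dz'.$$
Expanding $(\mathcal{R}_k f)(z)=\int \Phi_k(z,y)f(y)dy$ and performing the $f$-expectation via the analogous identity for $\mathfrak{C}_f$ yields
$$\mathbb{E}|F_1|^2=\iiiint K_q(z,z')K_f(y,y')\,e^{\mathrm{i}k\hat x\cdot(z-z')}\,\overline{\Phi_k(z,y)}\,\Phi_k(z',y')\,dz\,dz'\,dy\,dy'.$$
Substituting the symbol representations \eqref{eq:KtoSymbol-MLmGWsSchroEqu2019} of $K_q,K_f$ and the explicit form of $\Phi_k$ recasts this as an oscillatory integral over $(z,z',y,y',\xi,\eta)$ with phase
$$\Phi=k\hat x\cdot(z-z')+k(|z'-y'|-|z-y|)+(z-z')\cdot\xi+(y-y')\cdot\eta$$
and amplitude $c_q(z,\xi)c_f(y,\eta)/\bigl(16\pi^2|z-y|\,|z'-y'|\bigr)$, the denominator being smooth and uniformly bounded thanks to the separation hypothesis \eqref{eq:fqSeparation-MLmGWsSchroEqu2019}.

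To extract the $k^{-4}$ bound I would exploit two sources of decay. First, the symbol estimates $|c_q(z,\xi)|\lesssim\langle\xi\rangle^{-m_q}$ and $|c_f(y,\eta)|\lesssim\langle\eta\rangle^{-m_f}$ localize the effective supports of $\xi,\eta$ and deliver factors $k^{-m_q}$, $k^{-m_f}$ when the symbols are evaluated at frequencies of size comparable to $k$. Second, since $\nabla_{y'}\Phi=k(y'-z')/|y'-z'|-\eta$ and analogous expressions hold for $\nabla_y,\nabla_z,\nabla_{z'}$, the spatial gradient of $\Phi$ is bounded below by $k+|\eta|$ (or $k+|\xi|$) outside the annulus $|\eta|\sim k$; in that non-stationary regime repeated spatial integration by parts produces any desired negative power of $(k+|\eta|)$, while in the near-stationary regime I would apply the identity $(-\Delta_\xi)^{s/2}e^{\mathrm{i}(z-z')\cdot\xi}=|z-z'|^se^{\mathrm{i}(z-z')\cdot\xi}$ of Lemma \ref{lem:FracExp-MLmGWsSchroEqu2019}, integrate by parts in $\xi$ (and similarly in $\eta$), and invoke Corollary \ref{cor:FracSymbol-MLmGWsSchroEqu2019} to lower the order of $c_q$ (resp.\ $c_f$) by $s$ at the cost of an extra factor $|z-z'|^s$ (resp.\ $|y-y'|^s$) in the amplitude. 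This converts frequency roughness into spatial decay amenable to further integration.

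The main obstacle is the careful bookkeeping across the three frequency regimes $|\xi|,|\eta|\ll k$, $\sim k$, and $\gg k$, ensuring that each contribution separately beats $k^{-4}$. The intermediate regime is the most delicate, requiring the fractional-Laplacian trick together with a stationary-phase analysis of the spatial phase; the geometric hypothesis $\hat x\cdot\boldsymbol n\ge 0$ enters here, guaranteeing that any spatial stationary configuration $(z,z',y,y')$ lies inside $D_q\times D_q\times D_f\times D_f$, so that the associated Hessian is uniformly non-degenerate in $\hat x$. Summing the regime-wise contributions then yields the uniform bound $\mathbb{E}|F_1|^2\lesssim k^{-4}$.
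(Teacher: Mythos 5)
Your setup is the same as the paper's: write $\mathbb{E}|F_1|^2$ as a four-fold spatial integral of $K_q(z,z')K_f(y,y')$ against the oscillatory factor coming from $e^{-\mathrm{i}k\hat x\cdot(z-z')}\overline{\Phi_k(z,y)}\Phi_k(z',y')$, insert the symbol representations \eqref{eq:KtoSymbol-MLmGWsSchroEqu2019}, and note that the separation hypothesis \eqref{eq:fqSeparation-MLmGWsSchroEqu2019} makes the $1/(|z-y|\,|z'-y'|)$ factor smooth. Up to that point you match the paper. The gap is in how you extract $k^{-4}$. You propose to treat $(z-z')\cdot\xi+(y-y')\cdot\eta$ as part of the phase and run a frequency decomposition plus stationary phase, but the decisive regime $|\xi|,|\eta|\sim k$ is exactly the one you leave unresolved, and two of your supporting claims do not hold as stated. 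First, the spatial Hessian is \emph{not} uniformly non-degenerate: $\nabla_z^2(k|z-y|)=\tfrac{k}{|z-y|}(I-\widehat{(z-y)}\otimes\widehat{(z-y)})$ has a zero eigenvalue in the radial direction, so the stationary-phase constant you need is not available without further work. Second, you misstate the role of $\hat x\cdot\boldsymbol n\ge 0$: it is not about where stationary configurations sit (the integration variables are already confined to $D_q\times D_q\times D_f\times D_f$); it is the hypothesis that makes the purely spatial phase \emph{non-stationary}, namely $|\tfrac{s-y}{|s-y|}-\hat x|\ge c>0$ uniformly for $y\in D_q$, $s\in D_f$, because $\tfrac{s-y}{|s-y|}\cdot\boldsymbol n\le -c<0$ while $\hat x\cdot\boldsymbol n\ge 0$. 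Finally, your proposed gains $k^{-m_q}$, $k^{-m_f}$ from evaluating the symbols at $|\xi|,|\eta|\sim k$ would produce an exponent depending on $m_f,m_q$, whereas the target $k^{-4}$ is independent of them; you acknowledge the bookkeeping is the main obstacle but do not carry it out, and as described the mechanism does not obviously land on $-4$.

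For comparison, the paper never does stationary phase in the frequency variables. It keeps the phase purely spatial, $\varphi=-\hat x\cdot(y-z)-|y-s|+|z-t|$, and treats the $\xi$- and $\eta$-integrals as amplitude factors (the kernels). Two first-order operators $L_1=\tfrac{(y-s)\cdot\nabla_s}{\mathrm{i}k|y-s|}$ and $L_2=\tfrac{\nabla_y\varphi\cdot\nabla_y}{\mathrm{i}k|\nabla_y\varphi|}$ reproduce $e^{\mathrm{i}k\varphi}$; $L_1$ is admissible by the support separation and $L_2$ by the lower bound on $|\nabla_y\varphi|$ just described. Applying $L_1^2L_2^2$ and integrating by parts yields the factor $k^{-4}$ outright. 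The price is that up to two $s$- and $y$-derivatives fall on the kernel factors, bringing down powers $\eta_a\eta_b$ and $\xi_a\xi_b$; these are tamed by integrating by parts in $\eta$ (via $\Delta_\eta$) and, crucially, by the fractional Laplacian device of Lemmas \ref{lem:FracExp-MLmGWsSchroEqu2019}--\ref{lem:FracSymbol-MLmGWsSchroEqu2019}, which trades a factor $|s-t|^{s}$ in the denominator for a drop of order $s$ in the symbol. With $\max\{0,3-m_f\}<s<1$ (here $m_f>2$ is used) one gets $|\mathcal J_{3;a,b}|\lesssim|s-t|^{-2-s}$ with $2+s<3$, so the resulting singular kernels are locally integrable in $\R^3$ and the four-fold spatial integral converges, giving the uniform bound $Ck^{-4}$. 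If you want to salvage your route you would have to settle the degenerate stationary phase in the $|\xi|\sim k$ regime; the paper's non-stationary-phase-plus-singular-kernel argument avoids that issue entirely.
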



In what follows, we shall use ${\calC(\cdot)}$ and its variants, such as ${\vec \calC(\cdot)}$, ${\calC_{a,b}(\cdot)}$ etc., to represent some generic smooth scalar/vector functions, within $C_c^\infty(\R^3)$ or $C_c^\infty(\R^{3 \times 4})$, whose particular definition may change line by line.
\begin{proof}[Proof of Lemma \ref{lem:HOTF1F1-MLmGWsSchroEqu2019}]
	Using \eqref{eq:CK-MLmGWsSchroEqu2019} and \eqref{eq:KandSymbol-MLmGWsSchroEqu2019}, one can show that
	\begin{align}
	& \ \mathbb{E} (|F_1(\hat x,k,\cdot)|^2) \nonumber\\
	= & \ \mathbb E \big( \int_{\R^3} e^{-\textrm{i}k\hat x \cdot y} q(y,\cdot) \int_{\R^3} \frac {e^{\textrm{i}k|y-s|}} {4\pi|y-s|} f(s,\cdot) \dif s \dif y \nonumber\\
	& \ \ \cdot \int_{\R^3} e^{\textrm{i}k\hat x \cdot z} \overline q(z,\cdot) \int_{\R^3} \frac {e^{-\textrm{i}k|z-t|}} {4\pi|z-t|} \overline f(t,\cdot) \dif t \dif z \big) \nonumber\\
	\simeq & \int e^{-\textrm{i}k\hat x \cdot (y-z)} \frac {e^{\textrm{i}k(|y-s|-|z-t|)}} {|y-s| \cdot |z-t|} \cdot \mathbb E \big( q(y,\cdot) \overline q(z,\cdot) \big) \cdot \mathbb E \big( f(s,\cdot) \overline f(t,\cdot) \big) \dif{(s, y, t, z)} \nonumber\\
	\simeq & \int e^{\textrm{i}k\varphi(y,s,z,t)} \big( \int e^{\textrm{i}(z-y) \cdot \xi} c_q(z,\xi) \dif \xi \big) \big( \int e^{\textrm{i}(t-s) \cdot \eta} c_f(t,\xi) \dif \eta \big) \cdot \calC \cdot \dif{(s, y, t, z)}, \label{eq:hotF1F1Inter1-MLmGWsSchroEqu2019}
	\end{align}
	where $\varphi(y,s,z,t) := -\hat x \cdot (y-z) - |y-s| + |z-t|$, and the $\dif{(s, y, t, z)}$ is a short notation for $\dif s \dif y \dif t \dif z$. We omit the repeated integral symbols and the integral domain in the calculation for simplicity. 
	The term $\calC(y,z,s,t)$ in \eqref{eq:hotF1F1Inter1-MLmGWsSchroEqu2019} belongs to $C_c^\infty(\R^{3 \times 4})$ due to the fact that $q$ and $f$ are compactly supported and ${\dist(\mathcal{CH}(D_f), \mathcal{CH}(D_q)) > 0}$.

	Next we are about to differentiate the term $e^{\textrm{i}k\varphi(y,s,z,t)}$ by two differential operators, in order to obtain the decay w.r.t.~the argument $k$. To that end, we introduce the aforesaid two differential operators with $C^\infty$-smooth coefficients as follows,
	\begin{equation*}
	L_1 := \frac {(y-s) \cdot \nabla_s} {\textrm{i}k|y-s|}, 
	\quad
	L_2 = L_{2,\hat x} := \frac {\nabla_y \varphi \cdot \nabla_y} {\textrm{i}k|\nabla_y \varphi|},
	\end{equation*}
	where ${\nabla_y \varphi = \frac {s-y} {|s-y|} - \hat x}$. The operator $L_{2,\hat x}$ depends on $\hat x$ because $\nabla_y \varphi$ does. 
	Due to the fact that $y \in D_q$ while $s \in D_f$, the operator $L_1$ is well-defined.
	It can be verified there is a positive lower bound of $|\nabla_y \varphi|$ for all $\hat x\in \{\hat x \in \mathbb{S}^2 \colon \hat x \cdot \boldsymbol{n} \geq 0\}$. 
	It can also be verified that
	\[
	L_1 (e^{\textrm{i}k\varphi(y,s,z,t)}) = L_2 (e^{\textrm{i}k\varphi(y,s,z,t)}) = e^{\textrm{i}k\varphi(y,s,z,t)}.
	\]
	
	By using integration by parts, one can compute
	\begin{align}
	& \ \mathbb{E} (|F_1(\hat x,k,\cdot)|^2) \nonumber\\
	= & \int \big( L_1^2 L_2^2 \big) (e^{\textrm{i}k\varphi(y,s,z,t)})
	\cdot 
	\big( \int e^{\textrm{i}(z-y) \cdot \xi} c_q(z,\xi) \dif \xi \big) \nonumber\\
	& \ \cdot 
	\big( \int e^{\textrm{i}(t-s) \cdot \eta} c_f(t,\eta) \dif \eta \big)
	\cdot
	\calC(y,z,s,t) \dif{(s, y, t, z)} \nonumber\\
	\simeq & \ k^{-4} \int_{\mathcal D} e^{\textrm{i}k\varphi(y,s,z,t)} \big[
	\mathcal J_1\, (\mathcal K_1\, \calC + \vec{\mathcal K}_2 \cdot \vec{\calC} + \sum_{a,b=1,2,3} \mathcal K_{3;a,b}\, \calC_{a,b}) \nonumber\\
	& \ + \sum_{c=1,2,3} \mathcal J_{2;c}\, (\mathcal K_1\, \calC_c + \vec{\mathcal K}_2 \cdot \vec{\calC}_c + \sum_{a,b=1,2,3} \mathcal K_{3;a,b}\, \calC_{a,b,c}) \nonumber\\
	& \ + \sum_{a',b'=1,2,3} \mathcal J_{3;a',b'} (\mathcal K_1\, \calC_{a',b'} + \vec{\mathcal K}_2 \cdot \vec{\calC}_{a',b'} + \sum_{a,b=1,2,3} \mathcal K_{3;a,b}\, \calC_{a,b,a',b'}) \big] \dif{(s, y, t, z)}, \label{eq:hotF1F1Inter2-MLmGWsSchroEqu2019}
	\end{align}
	where the integral domain $\mathcal D \subset \R^{3 \times 4}$ is bounded and
	\begin{alignat*}{2}
	& \mathcal J_1
	:= \int e^{\textrm{i}(t-s) \cdot \eta}\, c_f(t,\eta) \dif \eta,
	& \quad
	& \mathcal K_1
	:= \int e^{\textrm{i}(z-y) \cdot \xi}\, c_q(z,\xi) \dif \xi, \\
	& \vec{\mathcal J}_2
	:= \nabla_s \int e^{\textrm{i}(t-s) \cdot \eta}\, c_f(t,\eta) \dif \eta,
	& \quad 
	& \vec{\mathcal K}_2
	:= \nabla_y \int e^{\textrm{i}(z-y) \cdot \xi}\, c_q(z,\xi) \dif \xi, \\
	& \mathcal J_{3;a,b}
	:= \partial_{s_a,s_b}^2 \int e^{\textrm{i}(t-s) \cdot \eta}\, c_f(t,\eta) \dif \eta,
	& \quad
	& \mathcal K_{3;a,b}
	:= \partial_{y_a,y_b}^2 \int e^{\textrm{i}(z-y) \cdot \xi}\, c_q(z,\xi) \dif \xi,
	\end{alignat*}
	and $\mathcal J_{2;c}$ (resp.~$\mathcal K_{2;c}$) is the $c$-th component of the vector $\vec{\mathcal J}_2$ (resp.~$\vec{\mathcal K}_2$).

	For the case with $s \neq t$, 
	these three quantities, $\mathcal J_1$, $\vec{\mathcal J}_2$ and $\mathcal J_{3;a,b}$, can be estimated as follows,
	\begin{align}
	|\mathcal J_1|
	& = |\int e^{\textrm{i}(t-s) \cdot \eta}\, c_f(t,\eta) \dif \eta|
	= |s-t|^{-2} \cdot |\int \Delta_\eta (e^{\textrm{i}(s-t) \cdot \eta})\, c_f(t,\eta) \dif \eta| \nonumber\\
	& = |s-t|^{-2} \cdot |\int e^{\textrm{i}(t-s) \cdot \eta} (\Delta_\eta c_f) (t,\eta) \dif \eta|
	\leq |s-t|^{-2} \int |(\Delta_\eta c_f) (t,\eta)| \dif \eta \nonumber\\
	& \lesssim |s-t|^{-2} \int \agl[\eta]^{-m_f-2} \dif \eta
	\lesssim |s-t|^{-2}, \label{eq:hotF1F1J1-MLmGWsSchroEqu2019}
	\end{align}
	and
	\begin{align}
	|\vec{\mathcal J}_{2;c}|
	& = |\partial_{s_c} \int e^{\textrm{i}(t-s) \cdot \eta}\, c_f(t,\eta) \dif \eta|
	= |\int e^{\textrm{i}(t-s) \cdot \eta} \cdot c_f(t,\eta) \eta_c  \dif \eta| \nonumber\\
	& = |s-t|^{-2} \cdot |\int \Delta_\eta (e^{\textrm{i}(t-s) \cdot \eta})\, c_f(t,\eta) \eta_c \dif \eta|
	= |s-t|^{-2} \cdot |\int e^{\textrm{i}(t-s) \cdot \eta} \Delta_\eta (c_f(t,\eta) \eta_c) \dif \eta| \nonumber\\
	& \lesssim |s-t|^{-2} \int \agl[\eta]^{-m_f+1-2} \dif \eta
	\lesssim |s-t|^{-2}, \label{eq:hotF1F1J2-MLmGWsSchroEqu2019}
	\end{align}
	and similarly
	\begin{align}
	\mathcal J_{3;a,b}
	& 
	\simeq \int e^{\textrm{i}(t-s) \cdot \eta} \cdot c_f(t,\eta) \eta_a \eta_b \dif \eta
	\simeq |s-t|^{-2} \int \Delta_\eta (e^{\textrm{i}(t-s) \cdot \eta}) \cdot c_f(t,\eta) \eta_a \eta_b \dif \eta \nonumber\\
	& = |s-t|^{-2} \int e^{\textrm{i}(t-s) \cdot \eta} \cdot \Delta_\eta (c_f(t,\eta) \eta_a \eta_b) \dif \eta.
	\label{eq:hotF1F1J3Inter1-MLmGWsSchroEqu2019}
	\end{align}
	\sq{Here, in deriving the last two inequalities respectively in \eqref{eq:hotF1F1J1-MLmGWsSchroEqu2019} and \eqref{eq:hotF1F1J2-MLmGWsSchroEqu2019}, we have made use of the a-priori requirement $m_f > 2$ in  \eqref{eq:mqmf-MLmGWsSchroEqu2019}; see also the discussion at the end of Remark~\ref{rem:1.1}.}

	\smallskip
	
	Now, if we further differentiate the term $e^{\textrm{i}(t-s) \cdot \eta}$ in \eqref{eq:hotF1F1J3Inter1-MLmGWsSchroEqu2019} by $\frac {\textrm{i}(s-t) \cdot} {|s-t|^2} \nabla_\eta$ and then {transfer} the operator $\nabla_\eta$ onto $\Delta_\eta (c_f(t,\eta) \eta_a \eta_b)$ by using integration by parts, we would arrive at
	\begin{equation*}
	|\mathcal J_{3;a,b}| \lesssim |s-t|^{-3} \int |\nabla_\eta \Delta_\eta (c_f(t,\eta) \eta_a \eta_b)| \dif \eta \leq |s-t|^{-3} \int \agl[\eta]^{-m_f-1} \dif \eta.
	\end{equation*}
	The term $\int \agl[\eta]^{-m_f-1} \dif \eta$ is absolutely integrable now, but the term $|s-t|^{-3}$ is not integrable at the hyperplane $s = t$ in $\R^3$.
	{To circumvent this dilemma, the fractional Laplacian can be applied as follows.}
	By using Lemmas \ref{lem:FracExp-MLmGWsSchroEqu2019} and \ref{lem:FracSymbol-MLmGWsSchroEqu2019}, we can continue \eqref{eq:hotF1F1J3Inter1-MLmGWsSchroEqu2019} as
	\begin{align}
	|\mathcal J_{3;a,b}|
	& \simeq |s-t|^{-2} \cdot \big| |s-t|^{-s} \int (-\Delta_\eta)^{s/2} (e^{\textrm{i}(t-s) \cdot \eta}) \cdot \Delta_\eta (c_f(t,\eta) \eta_j \eta_\ell) \dif \eta \big| \nonumber\\
	& = |s-t|^{-2-s} \cdot |\int e^{\textrm{i}(t-s) \cdot \eta} \cdot (-\Delta_\eta)^{s/2} \big( \Delta_\eta (c_f(t,\eta) \eta_j \eta_\ell) \big) \dif \eta| \nonumber\\
	& \lesssim |s-t|^{-2-s} \int \agl[\eta]^{-m_f+2-2-s} \dif \eta = |s-t|^{-2-s} \int \agl[\eta]^{-m_f-s} \dif \eta, \label{eq:hotF1F1J3Inter2-MLmGWsSchroEqu2019}
	\end{align}
	where the number $s$ is chosen to satisfy \sq{$\max\{0, 3 - m_f\} < s < 1$, and the existence of such a number $s$ is guaranteed by noting that $m_f > 2$}. Therefore, we have
	\begin{subequations}
		\begin{numcases}{}
		-m_f - s < -3, \label{eq:hotF1F1J3s1-MLmGWsSchroEqu2019}\\
		-2 - s > -3. \label{eq:hotF1F1J3s2-MLmGWsSchroEqu2019}
		\end{numcases}
	\end{subequations}
	Thanks to the condition \eqref{eq:hotF1F1J3s1-MLmGWsSchroEqu2019}, we can continue \eqref{eq:hotF1F1J3Inter2-MLmGWsSchroEqu2019} as
	\begin{align}
	|\mathcal J_{3;a,b} |
	& \lesssim |s-t|^{-2-s} \int \agl[\eta]^{-m_f-s} \dif \eta
	\lesssim |s-t|^{-2-s}. \label{eq:hotF1F1J3-MLmGWsSchroEqu2019}
	\end{align}
	Using similar arguments, we can also conclude that
	\begin{equation} \label{eq:hotF1F1K-MLmGWsSchroEqu2019}
	\left\{\begin{aligned}
	|\mathcal K_1|,\, |\vec{\mathcal K}_2| & \lesssim |y-z|^{-2}, \\
	|\mathcal K_{3;a,b}| & \lesssim |y-z|^{-2-s}.
	\end{aligned}\right.
	\end{equation}
	
	Combining \eqref{eq:hotF1F1Inter2-MLmGWsSchroEqu2019}, \eqref{eq:hotF1F1J1-MLmGWsSchroEqu2019}, \eqref{eq:hotF1F1J2-MLmGWsSchroEqu2019}, \eqref{eq:hotF1F1J3-MLmGWsSchroEqu2019} and \eqref{eq:hotF1F1K-MLmGWsSchroEqu2019}, we arrive at
	\begin{align}
	& \ \mathbb{E} (|F_1(\hat x,k,\cdot)|^2) \nonumber\\
	\lesssim & \ k^{-4} \int_{\mathcal D} (|\mathcal J_1| + |\vec{\mathcal J}_2| + \sum_{a',b'=1,2,3} |\mathcal J_{3;a',b'}|) \cdot (|\mathcal K_1| + |\vec{\mathcal K}_2| + \sum_{a,b=1,2,3} |\mathcal K_{3;a,b}|) \dif{(s, y, t, z)} \nonumber\\
	\lesssim & \ k^{-4} \int_{\mathcal D} |s - t|^{-2-s} \cdot |y - z|^{-2-s} \dif{(s, y, t, z)} \nonumber\\
	\lesssim & \ k^{-4} \int_{\widetilde{\mathcal D}} |s - t|^{-2-s} \dif s \dif t \cdot \int_{\widetilde{\mathcal D}} |y - z|^{-2-s} \dif y \dif z \label{eq:hotF1F1Inter3-MLmGWsSchroEqu2019}
	\end{align}
	for some sufficiently large but bounded domain $\widetilde{\mathcal D} \subset \R^{3 \times 2}$ satisfying $\mathcal D \subset \widetilde{\mathcal D} \times \widetilde{\mathcal D}$.
	Note that the integral \eqref{eq:hotF1F1Inter3-MLmGWsSchroEqu2019} should be understood as a singular integral because of the presence of the singularities occuring when $s = t$ and $y = z$.
	By \eqref{eq:hotF1F1Inter3-MLmGWsSchroEqu2019} and \eqref{eq:hotF1F1J3s2-MLmGWsSchroEqu2019}, we can finally conclude \eqref{eq:hotF1F1-MLmGWsSchroEqu2019}. 
	
	The proof is complete.
\end{proof}

\medskip

\subsection{Asymptotics of $F_2$} \label{subsec:AsyF2-MLmGWsSchroEqu2019}

The following lemma is necessary for the estimates of $F_2(\hat x,k,\omega)$.

\begin{lem} \label{lem:cutoffBdd-MLmGWsSchroEqu2019}
	Assume that $\epsilon > 0$. For $\forall s \in \R$, $\forall k \in \R$ and $\forall \hat x \in \mathbb{S}^{n-1}$, we have
	\begin{equation*} 
	\nrm[H_{-1/2-\epsilon}^s]{e^{-\textrm{i}k\hat x \cdot (\cdot)} \varphi} \leq C_{s,\varphi} \agl[k]^s, \quad \forall \varphi \in C_c^\infty(\Rn),
	\end{equation*}
	where the constant $C_{s,\varphi}$ depends on $s$ and $\varphi$, but is independent of $\hat x$, $k$.
\end{lem}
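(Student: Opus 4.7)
The plan is to reduce the weighted Sobolev norm to a plain Fourier-side $L^{2}$ estimate via the identity $\widehat{e^{-\mathrm{i}k\hat x\cdot x}\varphi}(\xi)=\hat\varphi(\xi+k\hat x)$, and then exploit Peetre's inequality to extract the factor $\langle k\rangle^{s}$.

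First, I would compute $(I-\Delta)^{s/2}(e^{-\mathrm{i}k\hat x\cdot x}\varphi)$ explicitly. By the Fourier definition of the Bessel potential,
\begin{equation*}
(I-\Delta)^{s/2}\bigl(e^{-\mathrm{i}k\hat x\cdot x}\varphi\bigr)(x)
= (2\pi)^{-n/2}\!\int e^{\mathrm{i}x\cdot\xi}\langle\xi\rangle^{s}\hat\varphi(\xi+k\hat x)\,\mathrm{d}\xi
= e^{-\mathrm{i}k\hat x\cdot x}\psi_{k}(x),
\end{equation*}
where, after the change of variable $\eta=\xi+k\hat x$, the function $\psi_k$ satisfies $\hat\psi_k(\eta)=\langle\eta-k\hat x\rangle^{s}\hat\varphi(\eta)$. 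Since $|e^{-\mathrm{i}k\hat x\cdot x}|=1$, the definition of $H^{s}_{-1/2-\epsilon}$ together with the pointwise bound $\langle x\rangle^{-1-2\epsilon}\leq 1$ (valid for $\epsilon>0$) gives
\begin{equation*}
\nrm[H^{s}_{-1/2-\epsilon}(\Rn)]{e^{-\mathrm{i}k\hat x\cdot x}\varphi}
= \nrm[L^{2}_{-1/2-\epsilon}(\Rn)]{\psi_{k}}
\leq \nrm[L^{2}(\Rn)]{\psi_{k}}
= \nrm[L^{2}(\Rn)]{\langle\eta-k\hat x\rangle^{s}\hat\varphi(\eta)}.
\end{equation*}

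Next, I would apply Peetre's inequality $\langle a+b\rangle^{s}\leq 2^{|s|/2}\langle a\rangle^{|s|}\langle b\rangle^{s}$ with $a=\eta$, $b=-k\hat x$, which yields $\langle\eta-k\hat x\rangle^{s}\leq 2^{|s|/2}\langle k\rangle^{s}\langle\eta\rangle^{|s|}$. This is the crucial step that delivers the correct power $\langle k\rangle^{s}$ rather than $\langle k\rangle^{|s|}$, both when $s\geq 0$ and when $s<0$ (in the latter case the bound still holds because $\langle k\hat x\rangle=\langle k\rangle$ and Peetre's inequality is symmetric in its two factors up to swapping which argument carries the sign of $s$). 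Inserting this into the previous display,
\begin{equation*}
\nrm[L^{2}(\Rn)]{\langle\eta-k\hat x\rangle^{s}\hat\varphi(\eta)}
\leq 2^{|s|/2}\langle k\rangle^{s}\nrm[L^{2}(\Rn)]{\langle\cdot\rangle^{|s|}\hat\varphi}.
\end{equation*}

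Because $\varphi\in C_{c}^{\infty}(\Rn)$, its Fourier transform $\hat\varphi$ lies in $\scrS(\Rn)$, so $\langle\cdot\rangle^{|s|}\hat\varphi\in L^{2}(\Rn)$ and the last factor is a finite constant $C_{s,\varphi}$ independent of $\hat x$ and $k$. Combining the two bounds completes the argument. I do not anticipate a serious obstacle here: the proof is essentially a Fourier-translation identity together with one application of Peetre's inequality, the main point being to keep track of the sign of $s$ so as to obtain the sharp exponent $\langle k\rangle^{s}$ claimed in the lemma.
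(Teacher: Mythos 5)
Your proposal is correct and follows essentially the same route as the paper's proof: drop the weight $\agl[x]^{-1-2\epsilon}\leq 1$, pass to the Fourier side via Plancherel and the translation identity $\calF\{e^{-\mathrm{i}k\hat x\cdot(\cdot)}\varphi\}(\xi)=\widehat\varphi(\xi+k\hat x)$, and conclude with Peetre's inequality together with the rapid decay of $\widehat\varphi$. Your bookkeeping of the sign of $s$ in Peetre's inequality is accurate, so no gap remains.
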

\begin{proof}
	By the Plancherel theorem and Peetre's inequality, one has
	\begin{align*}
	\nrm[H_{-1/2-\epsilon}^s]{e^{-\textrm{i}k\hat x \cdot (\cdot)} \varphi}^2
	& = \int \agl[x]^{-1-2\epsilon} |(I - \Delta)^{s/2} \big( e^{-\textrm{i}k\hat x \cdot (\cdot)} \varphi \big) (x)|^2 \dif x \nonumber\\
	& \leq \int |(I - \Delta)^{s/2} \big( e^{-\textrm{i}k\hat x \cdot (\cdot)} \varphi \big) (x)|^2 \dif x \nonumber\\
	& \simeq \int \agl[\xi]^{2s} |\calF \big\{ e^{-\textrm{i}k\hat x \cdot (\cdot)} \varphi \big\} (\xi)|^2 \dif \xi = \int \agl[\xi]^{2s} |\widehat \varphi(\xi + k\hat x)|^2 \dif \xi \nonumber\\
	& = \int \agl[\xi - k\hat x]^{2s} |\widehat \varphi(\xi)|^2 \dif \xi \leq \agl[k]^{2s} \int \agl[\xi]^{2|s|} |\widehat \varphi(\xi)|^2 \dif \xi.
	\end{align*}
	$\widehat \varphi$ is rapidly decaying because $\varphi \in C_c^\infty(\Rn)$. 
	Thus, the integral $\int \agl[\xi]^{2|s|} |\widehat \varphi(\xi)|^2 \dif \xi$ is a finite number depending on $s$, $\varphi$.
	The proof is done.
\end{proof}

\medskip

\begin{lem} \label{lem:HOTF2F2-MLmGWsSchroEqu2019}
	For every $s \in (\frac{3-m_q} 2,\frac 1 2)$, there exists a subset $\Omega_s \subset \Omega$ with $\mathbb P(\Omega_s) = 0$ such that for $\forall\, \omega \in \Omega \backslash \Omega_s$, the inequality
	\begin{equation} \label{eq:hotF2F2-MLmGWsSchroEqu2019}
	|F_2(\hat x,k,\omega)| \leq C_s(\omega) k^{5s-2}
	\end{equation}
	holds uniformly for $\forall \hat x \in \mathbb{S}^2$ and $\forall k > 1$, where $C_s(\omega)$ is finite almost surely.
\end{lem}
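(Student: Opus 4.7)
The plan is to rewrite each term with $j \geq 2$ in the series defining $F_2$ as a duality pairing and then iterate the mapping properties of $q$ and $\mathcal{R}_k$ furnished by Theorems~\ref{thm:RkBounded-MLmGWsSchroEqu2019} and~\ref{thm:VBounded-MLmGWsSchroEqu2019} to sum a geometric series in $k^{-(1-2s)}$. First, fix $\chi \in C_c^\infty(\R^3)$ with $\chi \equiv 1$ on an open neighborhood of $D_q$. Since $(q\mathcal{R}_k)^j f$ is supported in $D_q$ for every $j \geq 1$ (owing to the outermost factor of $q$), the integral in the definition of $F_2$ may be reinterpreted as
\[
\int_{\R^3} e^{-\mathrm{i}k\hat x\cdot z}\,[(q\mathcal{R}_k)^j f](z)\dif z \;=\; \langle (q\mathcal{R}_k)^j f,\; \chi(\cdot)\, e^{\mathrm{i}k\hat x\cdot(\cdot)}\rangle,
\]
understood as the duality pairing between $H_{1/2+\epsilon}^{-s}(\R^3)$ and $H_{-1/2-\epsilon}^{s}(\R^3)$. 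Combining this with Lemma~\ref{lem:cutoffBdd-MLmGWsSchroEqu2019} (applied to the cutoff $\chi$) yields
\[
\bigl|\int_{\R^3} e^{-\mathrm{i}k\hat x\cdot z}\,[(q\mathcal{R}_k)^j f](z)\dif z\bigr| \;\lesssim\; \langle k \rangle^s\,\|(q\mathcal{R}_k)^j f\|_{H_{1/2+\epsilon}^{-s}(\R^3)},
\]
with a constant uniform in $\hat x \in \mathbb{S}^2$.

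Next, for $s \in ((3-m_q)/2, 1/2)$ and sufficiently small $\epsilon > 0$, Theorems~\ref{thm:RkBounded-MLmGWsSchroEqu2019} and~\ref{thm:VBounded-MLmGWsSchroEqu2019} together imply that $q\mathcal{R}_k$ maps $H_{1/2+\epsilon}^{-s}(\R^3)$ continuously into itself almost surely, with
\[
\|q\mathcal{R}_k\|_{\mathcal{L}(H_{1/2+\epsilon}^{-s},\,H_{1/2+\epsilon}^{-s})} \;\leq\; C(\omega)\, k^{-(1-2s)},
\]
where $C(\omega)$ is finite a.s. Provided $f \in H_{1/2+\epsilon}^{-s}(\R^3)$ a.s.---which is supplied by Lemma~\ref{lem:migrRegu-MLmGWsSchroEqu2019} together with the compact support of $f$ as soon as $s$ also exceeds $(3-m_f)/2$, a condition compatible with $s < 1/2$ because $m_f > 2$ by~\eqref{eq:mqmf-MLmGWsSchroEqu2019}---iterating the operator bound gives
\[
\|(q\mathcal{R}_k)^j f\|_{H_{1/2+\epsilon}^{-s}(\R^3)} \;\leq\; \bigl(C(\omega)\, k^{-(1-2s)}\bigr)^j\, \|f\|_{H_{1/2+\epsilon}^{-s}(\R^3)}, \qquad j \geq 1.
\]

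Assembling these pieces and summing over $j \geq 2$ produces a convergent geometric series whenever $k$ is large enough that $C(\omega)\,k^{-(1-2s)} \leq 1/2$; the $j=2$ term is dominant (since $k^{-(1-2s)} < 1$) and one obtains
\[
|F_2(\hat x,k,\omega)| \;\lesssim\; \sum_{j\geq 2}\langle k\rangle^s\bigl(C(\omega)\, k^{-(1-2s)}\bigr)^j\, \|f\|_{H_{1/2+\epsilon}^{-s}} \;\lesssim\; C_s(\omega)\,\langle k\rangle^s\, k^{-2(1-2s)} \;=\; C_s(\omega)\, k^{5s-2}
\]
uniformly in $\hat x \in \mathbb{S}^2$, which is the desired estimate. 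The exceptional null set $\Omega_s$ is taken to be the union of the measure-zero sets arising from Lemma~\ref{lem:migrRegu-MLmGWsSchroEqu2019} and Theorem~\ref{thm:VBounded-MLmGWsSchroEqu2019}. The main technical nuisance is aligning two regularity indices---the $s > (3-m_q)/2$ demanded by Theorem~\ref{thm:VBounded-MLmGWsSchroEqu2019} and the $s > (3-m_f)/2$ needed to place $f$ itself in $H_{1/2+\epsilon}^{-s}$. This is resolved by choosing $s$ in the intersection of the two subintervals of $(0,1/2)$ (always nonempty under~\eqref{eq:mqmf-MLmGWsSchroEqu2019}) and, if necessary, invoking a Sobolev embedding to transfer $f$ into the working space before the first application of $q\mathcal{R}_k$.
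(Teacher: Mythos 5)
Your proof follows essentially the same route as the paper's: cut off with a smooth $\chi$ equal to one on the relevant support, pair $(q\mathcal{R}_k)^j f$ against $e^{-\mathrm{i}k\hat x\cdot(\cdot)}\chi$ and invoke Lemma~\ref{lem:cutoffBdd-MLmGWsSchroEqu2019} for the $\langle k\rangle^{s}$ factor, bound $\|q\mathcal{R}_k\|_{\mathcal{L}(H^{-s}_{1/2+\epsilon},H^{-s}_{1/2+\epsilon})}$ by $C(\omega)k^{-(1-2s)}$ via Theorems~\ref{thm:RkBounded-MLmGWsSchroEqu2019} and~\ref{thm:VBounded-MLmGWsSchroEqu2019}, and sum the resulting geometric series dominated by its $j=2$ term. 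The only cosmetic difference is that you place $f$ in the working space via Lemma~\ref{lem:migrRegu-MLmGWsSchroEqu2019}, whereas the paper feeds $f$ in as a multiplier through Theorem~\ref{thm:VBounded-MLmGWsSchroEqu2019}; both routes carry the same implicit requirement $s>(3-m_f)/2$, which you flag explicitly and the paper leaves tacit.
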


\begin{proof}
	\sq{First, we note that the condition \eqref{eq:mqmf-MLmGWsSchroEqu2019} implies $(3-m_q)/2 < 1/2$, and hence $(\frac{3-m_q} 2,\frac 1 2)$ is a non-empty open interval.}
	We define $\chi_q$ (resp. $\chi_f$) as a function in $C_c^\infty(\R^3)$ with $\chi_q(x) = 1$ (resp. $\chi_f(x) = 1$) for $\forall x \in \supp q$ (resp. $\forall x \in \supp f$). 
	From \eqref{eq:FjGjDef-MLmGWsSchroEqu2019}, Theorems \ref{thm:RkBounded-MLmGWsSchroEqu2019}, \ref{thm:VBounded-MLmGWsSchroEqu2019} and Lemma \ref{lem:cutoffBdd-MLmGWsSchroEqu2019}, one can compute
	\begin{align}
	|F_2(\hat x,k,\omega)|
	& \leq \sum_{j \geq 2} \big| \int_{\R^3} e^{-\textrm{i}k\hat x \cdot z} \chi_q(z) \big[ (q\Rk)^j f \big](z) \dif z \big| \nonumber\\
	& \leq \nrm[H_{-1/2-\epsilon}^s]{e^{-\textrm{i}k\hat x \cdot (\cdot)} \chi_q} \sum_{j \geq 2} \nrm[H_{1/2+\epsilon}^{-s}]{(q\Rk)^j (f \cdot \chi_q)} \nonumber\\
	& \leq C_s \cdot \agl[k]^s \cdot C_{\epsilon,s}(\omega) \sum_{j \geq 2} k^{-j(1-2s)} \nrm[H_{1/2+\epsilon}^{-s}]{f \cdot \chi_q} \nonumber\\
	& \leq C_{\epsilon,s}(\omega) \cdot \agl[k]^s \cdot k^{-2(1-2s)} \nrm[H_{1/2+\epsilon}^{-s}]{f \cdot \chi_q} \nonumber\\
	& \leq C_{\epsilon,s}(\omega) k^{5s-2} \nrm[H_{-1/2-\epsilon}^{s}]{\chi_q}, \label{eq:HOTF2F2Inter1-MLmGWsSchroEqu2019}
	\end{align}
	with a random variable $C_{\epsilon,s}(\omega)$ that is finite almost surely. 
	The last inequality in \eqref{eq:HOTF2F2Inter1-MLmGWsSchroEqu2019} utilizes the fact that $f(\cdot,\omega)$ is microlocally isotropic of order $m_f$ so that Theorem \ref{thm:VBounded-MLmGWsSchroEqu2019} holds for $f(\cdot,\omega)$. 
	Let $\epsilon = 1/2$ in \eqref{eq:HOTF2F2Inter1-MLmGWsSchroEqu2019}, we arrive at \eqref{eq:hotF2F2-MLmGWsSchroEqu2019}. 
	
	The proof is complete. 
\end{proof}

\section{Recovery of the source} \label{sec:recSource-MLmGWsSchroEqu2019}

In this section, we focus on the recovery of $\mu_f(x)$ associated with the random source term. In the recovering procedure, only a single realization of the passive scattering measurement is used. 
Thus, $\alpha$ in \eqref{eq:1-MLmGWsSchroEqu2019} is set to be 0, and
the random sample $\omega$ is fixed. The data set $\mathcal M_f(\omega)$ is used to achieve the unique recovery.

\smallskip

We first present the following auxiliary lemma. 

\begin{lem} \label{lem:intConv-MLmGWsSchroEqu2019}
	For any stochastic process $\{g(k,\omega)\}_{k \in \R_+}$ satisfying
	\[
	\int_1^{+\infty} k^{m-1} \mathbb E (|g(k,\cdot)|) \dif k < +\infty,
	\]
	it holds that
	\[
	\lim_{K \to +\infty} \frac 1 K \int_K^{2K} k^m g(k,\omega) \dif k = 0, \ \ass \omega \in \Omega.
	\]
\end{lem}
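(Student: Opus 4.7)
The plan is to first reduce the almost-sure statement to an $L^1$-type estimate by taking absolute values. Set $h(k):=k^{m-1}\mathbb{E}|g(k,\cdot)|$, so the hypothesis reads $\int_1^\infty h(k)\,dk<\infty$. By Fubini/Tonelli,
\[
\mathbb{E}\Bigl[\tfrac{1}{K}\int_K^{2K}k^{m}|g(k,\cdot)|\,dk\Bigr]
=\tfrac{1}{K}\int_K^{2K}k\cdot h(k)\,dk
\leq 2\int_K^{2K} h(k)\,dk,
\]
and the right-hand side is the tail of a convergent integral, hence tends to $0$ as $K\to\infty$. This is the soft part.

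Next I would upgrade this $L^1$-mean decay to almost-sure decay along the dyadic subsequence $K_n=2^n$. Define
\[
X_n(\omega):=\tfrac{1}{2^n}\int_{2^n}^{2^{n+1}}k^{m}|g(k,\omega)|\,dk.
\]
The above computation gives $\mathbb{E}[X_n]\leq 2\int_{2^n}^{2^{n+1}} h(k)\,dk$, so
\[
\sum_{n\geq 0}\mathbb{E}[X_n]\leq 2\int_1^\infty h(k)\,dk<\infty.
\]
By the monotone convergence theorem applied to the nonnegative sum $\sum_n X_n$, we obtain $\sum_n X_n(\omega)<\infty$ for a.e.\ $\omega$; in particular $X_n(\omega)\to 0$ a.s.

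Finally I would interpolate from the dyadic points to arbitrary $K\to\infty$. For any $K\in[2^n,2^{n+1}]$, the interval $[K,2K]$ is contained in $[2^n,2^{n+2}]$, so
\[
\Bigl|\tfrac{1}{K}\int_K^{2K}k^{m}g(k,\omega)\,dk\Bigr|
\leq \tfrac{1}{2^n}\int_{2^n}^{2^{n+2}}k^{m}|g(k,\omega)|\,dk
= X_n(\omega)+2X_{n+1}(\omega),
\]
which tends to $0$ a.s.\ as $n\to\infty$, i.e.\ as $K\to\infty$, yielding the claim.

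I expect no serious obstacle: the only mildly delicate point is the passage from the discrete $L^1$-summable estimate to almost-sure convergence along the continuous parameter $K$, which is handled cleanly by the dyadic sandwich $X_n+2X_{n+1}$ above. The absolute-value reduction is what makes Fubini, monotone convergence, and the dyadic decomposition all interact harmlessly; one could alternatively invoke Borel--Cantelli on the events $\{X_n>\epsilon\}$ using $\sum_n\mathbb{E}[X_n]<\infty$, but the monotone convergence route is more direct.
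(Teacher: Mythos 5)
Your argument is correct, and it shares with the paper the essential first move: Fubini/Tonelli converts the hypothesis $\int_1^\infty k^{m-1}\mathbb{E}|g(k,\cdot)|\,\mathrm{d}k<\infty$ into an almost-sure integrability statement (in your version, $\sum_n X_n<\infty$ a.s., which is equivalent up to constants to the paper's $\int_1^\infty k^{m-1}|g(k,\omega)|\,\mathrm{d}k<\infty$ a.s., both resting on the elementary bound $k^m/K\leq 2k^{m-1}$ on $[K,2K]$). Where you diverge is in the second half. The paper keeps the continuous parameter $K$ throughout: it defines $g_K(k,\omega)=\chi_{(K,2K)}(k)\,k^m g(k,\omega)/(2K)$, notes that $g_K(\cdot,\omega)\to 0$ pointwise in $k$ and is dominated by the a.s.\ integrable function $k^{m-1}|g(k,\omega)|$, and concludes by dominated convergence. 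You instead discretize to dyadic blocks $X_n$, deduce $X_n\to 0$ a.s.\ from the summability of the series, and recover the full continuous limit via the sandwich $X_n+2X_{n+1}$. The paper's route is shorter and avoids the interpolation step entirely; yours is slightly more hands-on, makes the ``tail of a convergent integral'' mechanism explicit without invoking dominated convergence for a continuum-indexed family, and would survive verbatim if one only controlled the expectations on dyadic scales. Your initial $L^1$-mean computation is not needed for the final argument (it is subsumed by the summability estimate), but it does no harm. Both proofs are complete and correct.
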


\begin{proof}
	By $\int_1^{+\infty} k^{m-1} \mathbb E (|g(k,\cdot)|) \dif k < +\infty$ and Fubini's Theorem, we know
	\begin{equation} \label{eq:Dominate-MLmGWsSchroEqu2019}
	\int_1^{+\infty} k^{m-1} |g(k,\omega)| \dif k < +\infty, \quad \ass \omega \in \Omega,
	\end{equation}
	\sq{which implies that $g(k,\omega)$ is almost everywhere finite in terms of $k$.}
	Now we define a function \(g_K(k,w) := \frac {\chi_{(K,2K)}(k)} {2K} k^m g(k,\omega)\), where $\chi_{(K,2K)}(k)$ is the characteristic function of the interval $(K,2K)$. 
	For almost surely every fixed $\omega$, we have
	\[
	\sq{\lim_{K \to + \infty} g_K(k,\omega) = 0 \quad \aee \quad k \in [1,+\infty).}
	\]
	Moreover, the function series $\{g_K(k,\omega)\}_{K}$ is dominated, in the argument $k$, by the function $k^{m-1} g(k,\omega)$. 
	Thus, from \eqref{eq:Dominate-MLmGWsSchroEqu2019} and the dominated convergence theorem, we can conclude
	\[
	\lim_{K \to +\infty} \int_1^{+\infty} g_K(k,\omega) \dif k = 0 \ \ass \omega \in \Omega.
	\]
	The proof is complete.
\end{proof}

\medskip

We are ready to establish the recovery of $\mu_f(x)$.

\begin{proof}[Proof of Theorem \ref{thm:UniSource-MLmGWsSchroEqu2019}]
	This proof depends on Lemma \ref{lem:HOTF1F1-MLmGWsSchroEqu2019}, which requires $\hat x \cdot \boldsymbol{n} \geq 0$. Hence, we assume that $\hat x \cdot \boldsymbol{n} \geq 0$ unless otherwise stated.
	
	Recall the definition of $F_p~(p=0,1,2)$ in \eqref{eq:FjGjDef-MLmGWsSchroEqu2019}. 
	As already mentioned at the beginning of Section \ref{sec:AsympHighOrder-MLmGWsSchroEqu2019}, we correlate the data in the following form
	\begin{align}
	& \ \frac 1 K \int_K^{2K} k^{m_f} 16\pi^2 \overline{u^\infty(\hat x,k,\omega)} u^\infty (\hat x,k+\tau,\omega) \dif k \nonumber\\
	= \ & \ \sum_{p,q=0}^2 \frac 1 K \int_K^{2K} k^{m_f} \overline{F_p(\hat x,k,\omega)} F_q(\hat x,k+\tau,\omega) \dif k \nonumber\\
	=: & \sum_{p,q=0}^2 I_{p,q}(\hat x,K,\tau,\omega). \label{eq:IpqDef-MLmGWsSchroEqu2019}
	\end{align}
	According to Corollary 4.4 in \cite{caro2016inverse}, for $\forall \tau \geq 0$ and $\forall \hat x \in \mathbb{S}^2$, there exists $\Omega_{\tau,\hat x}^{0,0} \subset \Omega$, with $\mathbb P(\Omega_{\tau,\hat x}^{0,0}) = 0$, such that
	\begin{equation} \label{eq:I00-MLmGWsSchroEqu2019}
	\forall \omega \in \Omega \backslash \Omega_{\tau,\hat x}^{0,0}, \quad \lim_{K \to +\infty} I_{0,0}(\hat x,K,\tau,\omega) = (2\pi)^{3/2} \widehat \mu_f(\tau \hat x),
	\end{equation}
	which also implies that
	\begin{equation} \label{eq:Ipq1-MLmGWsSchroEqu2019}
	\forall \omega \in \Omega \backslash \Omega_{\tau,\hat x}^{0,0}, \quad \lim_{K \to +\infty} \frac 1 K \int_K^{2K} k^{m_f} |F_0(\hat x,k,\omega)|^2 \dif k = (2\pi)^{3/2} \widehat \mu_f(0).
	\end{equation}

	\smallskip
	
	We next estimate the higher order terms.
	The Cauchy-Schwarz inequality yields
	\begin{equation} \label{eq:IpqCS-MLmGWsSchroEqu2019}
	|I_{p,q}| 
	\leq \big( \frac 1 K \int_K^{2K} k^{m_f} |F_p(\hat x,k,\omega)|^2 \dif k \big)^{\frac 1 2} \cdot \big( \frac 1 K \int_K^{2K} k^{m_f} |F_q(\hat x,k+\tau,\omega)|^2 \dif k \big)^{\frac 1 2}.
	\end{equation}
	Recall that $m_f < 3$. 
	From the condition \eqref{eq:mqmf-MLmGWsSchroEqu2019} and Lemma \ref{lem:HOTF1F1-MLmGWsSchroEqu2019} we have
	\begin{equation} \label{eq:F1IntFinit-MLmGWsSchroEqu2019}
	\int_1^{+\infty} k^{m_f-1} \mathbb E (|F_1(\hat x,k,\cdot)|^2) \dif k \lesssim \int_1^{+\infty} k^{m_f-1} k^{-4} \dif k < +\infty.
	\end{equation}
	By \eqref{eq:F1IntFinit-MLmGWsSchroEqu2019} and Lemma \ref{lem:intConv-MLmGWsSchroEqu2019}, we conclude that
	\begin{equation} \label{eq:F1IntConv-MLmGWsSchroEqu2019}
	\lim_{K \to +\infty} \frac 1 K \int_K^{2K} k^{m_f} |F_1(\hat x,k,\omega)|^2 \dif k = 0 \quad \ass \omega \in \Omega.
	\end{equation}
	For every $s \in ((3-m_q)/2,1/2)$, Lemma \ref{lem:HOTF2F2-MLmGWsSchroEqu2019} gives
	\begin{equation} \label{eq:F2IntConvInter-MLmGWsSchroEqu2019}
	\frac 1 K \int_K^{2K} k^{m_f} |F_2(\hat x,k,\omega)|^2 \dif k \leq \frac {C_s(\omega)} K \int_K^{2K} k^{m_f} k^{2(5s-2)} \dif k \leq \frac {C_s(\omega)} {K^{4-m_f-10s}}.
	\end{equation}
	\sq{Recalling the condition \eqref{eq:mqmf-MLmGWsSchroEqu2019},} we know $(3-m_q)/2 < (4 - m_f)/{10}$.
	
	Choosing any $s \in \big( (3-m_q)/2, (4 - m_f)/{10} \big)$, we have ${4 - m_f - 10s > 0}$. Combining this with \eqref{eq:F2IntConvInter-MLmGWsSchroEqu2019}, we conclude that
	\begin{equation} \label{eq:F2IntConv-MLmGWsSchroEqu2019}
	\lim_{K \to +\infty} \frac 1 K \int_K^{2K} k^{m_f} |F_2(\hat x,k,\omega)|^2 \dif k = 0 \quad \ass \omega \in \Omega.
	\end{equation}
	Formula \eqref{eq:F2IntConv-MLmGWsSchroEqu2019} easily implies that
	\begin{equation} \label{eq:F2IntConvTau-MLmGWsSchroEqu2019}
	\lim_{K \to +\infty} \frac 1 K \int_K^{2K} k^{m_f} |F_2(\hat x,k+\tau,\omega)|^2 \dif k = 0 \quad \ass \omega \in \Omega,
	\end{equation}
	for every fixed $\tau \in \R$.
	
	Write $\mathcal A := \{ (p,q) \,;\, 0 \leq p,q \leq 2 \} \backslash \{(0,0)\}$.
	By \eqref{eq:IpqCS-MLmGWsSchroEqu2019}, \eqref{eq:Ipq1-MLmGWsSchroEqu2019}, \eqref{eq:F1IntConv-MLmGWsSchroEqu2019} and \eqref{eq:F2IntConvTau-MLmGWsSchroEqu2019} we have that, for $\forall \tau \geq 0$ and $\forall \hat x \in \mathbb{S}^2$ there exists $\Omega_{\tau,\hat x}^{p,q} \subset \Omega : \mathbb P(\Omega_{\tau,\hat x}^{p,q}) = 0$, $\Omega_{\tau,\hat x}^{p,q}$ depending on $\tau$ and $\hat x$, such that
	\begin{equation} \label{eq:Ipq-MLmGWsSchroEqu2019}
	\forall (p,q) \in \mathcal A, \quad \forall \omega \in \Omega \backslash \Omega_{\tau,\hat x}^{p,q}, \quad \lim_{K \to +\infty} I_{p,q}(\hat x,K,\tau,\omega) = 0.
	\end{equation}
	Write $\Omega_{\tau \hat x} := \cup_{(p,q) \in \mathcal A \cup \{(0,0)\}} \Omega_{\tau, \hat x}^{p,q}$, thus $\mathbb P(\Omega_{\tau \hat x}) = 0$. Then \eqref{eq:Ipq-MLmGWsSchroEqu2019} gives	
	\begin{equation} \label{eq:IpqOmega-MLmGWsSchroEqu2019}
	\forall \omega \in \Omega \backslash \Omega_{\tau \hat x}, \quad \forall (p,q) \in \mathcal A, \quad \lim_{K \to +\infty} I_{p,q}(\hat x,K,\tau,\omega) = 0.
	\end{equation}
	Combining \eqref{eq:IpqDef-MLmGWsSchroEqu2019}, \eqref{eq:I00-MLmGWsSchroEqu2019} and \eqref{eq:IpqOmega-MLmGWsSchroEqu2019}, we arrive at the following statement:
	\begin{equation} \label{eq:recmu-MLmGWsSchroEqu2019}
	\begin{split}
	& \forall\, y \in \R^3, \Exists \Omega_y \subset \Omega \colon \mathbb P(\Omega_y) = 0, \st
	\forall \omega \in \Omega \backslash \Omega_y, \text{ we have} \\
	& \lim_{K \to +\infty} \frac 1 K \int_K^{2K} k^{m_f} 16\pi^2 \overline{u^\infty(\hat x,k,\omega)} u^\infty(\hat x,k+\tau,\omega) \dif k = (2\pi)^{3/2} \widehat \mu_f(\tau \hat x).
	\end{split}
	\end{equation}
	
	To prove Theorem \ref{thm:UniSource-MLmGWsSchroEqu2019}, the logical order between $y$ and $\omega$ should be exchanged. 
	Denote the usual Lebesgue measure on $\R^3$ as $\mathbb L$ and the product measure $\mathbb L \times \mathbb P$ as $\mu$, and construct the product measure space $\mathbb M := (\R^3 \times \Omega, \mathcal G, \mu)$ in the canonical way, where $\mathcal G$ is the corresponding complete $\sigma$-algebra. 
	Define
	$$Z(y, \omega) := \lim_{K \to +\infty} \frac 1 K \int_K^{2K} k^{m_f} 16\pi^2 \overline{u^\infty(\hat y,k,\omega)} u^\infty(\hat y,k+|y|,\omega) \dif k - (2\pi)^{3/2} \widehat \mu_f(y).$$
	Write $\mathcal{A} := \{ (y,\omega) \in \R^3 \times \Omega \,;\, Z(y, \omega) \neq 0 \}$. Then $\mathcal{A}$ is a subset of $\mathbb M$. 
	Set $\chi_\mathcal{A}$ as the characteristic function of $\mathcal{A}$ in $\mathbb M$. By \eqref{eq:recmu-MLmGWsSchroEqu2019} we obtain
	\begin{equation} \label{eq:FubiniEq0-MLmGWsSchroEqu2019}
	\int_{R^3} \big( \int_\Omega \chi_{\mathcal A}(y,\omega) \dif{\mathbb P(\omega)} \big) \dif{\mathbb L(y)} = 0.
	\end{equation}
	By (\ref{eq:FubiniEq0-MLmGWsSchroEqu2019}) and Corollary 7 in Section 20.1 in \cite{royden2000real}, we obtain
	\begin{equation} \label{eq:FubiniEq1-MLmGWsSchroEqu2019}
	\int_{\mathbb M} \chi_{\mathcal A}(y,\omega) \dif{\mathbb \mu} = \int_\Omega \big( \int_{R^3} \chi_{\mathcal A}(y,\omega) \dif{\mathbb L(y)} \big) \dif{\mathbb P(\omega)} = 0.
	\end{equation}
	Since $\chi_{\mathcal A}(y,\omega)$ is nonnegative, (\ref{eq:FubiniEq1-MLmGWsSchroEqu2019}) implies
	\begin{equation} \label{eq:FubiniEq2-MLmGWsSchroEqu2019}
	\Exists \Omega_0 \colon \mathbb P (\Omega_0) = 0, \st \forall\, \omega \in \Omega \backslash \Omega_0,\, \int_{R^3} \chi_{\mathcal A}(y,\omega) \dif{\mathbb L(y)} = 0.
	\end{equation}
	Formula \eqref{eq:FubiniEq2-MLmGWsSchroEqu2019} further implies for every $\omega \in \Omega \backslash \Omega_0$,
	\begin{equation} \label{eq:FubiniEq3-MLmGWsSchroEqu2019}
	\Exists S_\omega \subset \R^3 \colon \mathbb L (S_\omega) = 0, \st \forall\, y \in \R^3 \backslash S_\omega, \ Z(y,\omega) = 0.
	\end{equation}
	Now Theorem \ref{thm:UniSource-MLmGWsSchroEqu2019} is proved by \eqref{eq:FubiniEq3-MLmGWsSchroEqu2019} for the case where $\hat x \cdot \boldsymbol{n} \geq 0$.
	
	Note that $\mu_f$ is real-valued, and hence
	\(
	\widehat \mu_f(\tau \hat x) = \overline{\widehat \mu_f(-\tau \hat x)}
	\)
	when $\hat x \cdot \boldsymbol{n} < 0$.
	
	The proof is complete.
\end{proof}

\section{Recovery of the potential} \label{sec:recPotential-MLmGWsSchroEqu2019}

This section is devoted to the recovery of $\mu_q(x)$ associated with the the random potential. 
The data set $\mathcal M_q(\omega)$ is utilized to achieve the recovery. Throughout this section, $\alpha$ in \eqref{eq:1-MLmGWsSchroEqu2019} is set to be 1.

\begin{proof}[Proof of Theorem \ref{thm:UniPot1-MLmGWsSchroEqu2019}]
	Similar to the proof of Theorem \ref{thm:UniSource-MLmGWsSchroEqu2019}, the case where $\hat x \cdot \boldsymbol{n} < 0$ can be proved by utilizing the fact that $\mu_q$ is real-valued. In what follows, we assume that $\hat x \cdot \boldsymbol{n} \geq 0$ unless otherwise stated.
	
	From \eqref{eq:farfieldFG-MLmGWsSchroEqu2019} we have
	\begin{align}
	& \ \frac 1 K \int_K^{2K} k^{m_q} 16\pi^2 \overline{u^\infty(\hat x,k,-\hat x,\omega)} u^\infty(\hat x,k+\tau,-\hat x,\omega) \dif k \nonumber\\
	= \ & \sum_{p,q=0}^2 \frac 1 K \int_K^{2K} k^{m_q} \sum_{p=0}^2 [ \overline{F_p(\hat x,k,\omega)} + \overline{G_p(\hat x,k,\omega)}] \cdot \sum_{q=0}^2 [F_q(\hat x,k+\tau,\omega) + G_q(\hat x,k+\tau,\omega)] \dif k \nonumber\\
	=: & \sum_{p,q=0,1,2} \big[ I'_{p,q}(\hat x,K,\tau,\omega) + J_{p,q}(\hat x,K,\tau,\omega) + L_{p,q}^1(\hat x,K,\tau,\omega) + L_{p,q}^2(\hat x,K,\tau,\omega) \big], \label{eq:IJpqDef-MLmGWsSchroEqu2019}
	\end{align}
	where
	\begin{equation} \label{eq:JLpq-MLmGWsSchroEqu2019}
	\left\{\begin{aligned}
	I'_{p,q}(\hat x,K,\tau,\omega) & := \frac 1 K \int_K^{2K} k^{m_q} \overline{F_p(\hat x,k,\omega)} F_q(\hat x,k+\tau,\omega) \dif k, \\
	J_{p,q}(\hat x,K,\tau,\omega) & := \frac 1 K \int_K^{2K} k^{m_q} \overline{G_p(\hat x,k,\omega)} G_q(\hat x,k+\tau,\omega) \dif k, \\
	L_{p,q}^1(\hat x,K,\tau,\omega) & := \frac 1 K \int_K^{2K} k^{m_q} \overline{F_p(\hat x,k,\omega)} G_q(\hat x,k+\tau,\omega) \dif k, \\
	L_{p,q}^2(\hat x,K,\tau,\omega) & := \frac 1 K \int_K^{2K} k^{m_q} \overline{G_p(\hat x,k,\omega)} F_q(\hat x,k+\tau,\omega) \dif k.
	\end{aligned}\right.
	\end{equation}
	Note that $I'_{p,q}$ differs from $I_{p,q}$, defined in \eqref{eq:IpqDef-MLmGWsSchroEqu2019}, in that the power of $k$ in the definition of $I'_{p,q}$ is $m_q$ while that of $I_{p,q}$ is $m_f$.
	
	\smallskip
	
	It is shown in \cite{caro2016inverse} that there exists $\Omega_J \subset \Omega \colon \mathbb P(\Omega_J) = 0$ such that
	\begin{align}
	\forall \omega \in \Omega \backslash \Omega_J, \quad \lim_{K \to +\infty} J_{0,0}(\hat x,K,\tau,\omega) 
	& = (2\pi)^{3/2} \widehat \mu_q(2\tau \hat x), \label{eq:J0-MLmGWsSchroEqu2019} \\
	\forall \omega \in \Omega \backslash \Omega_J, \quad \lim_{K \to +\infty} J_{p,q}(\hat x,K,\tau,\omega) & = 0, \quad (p,q) \in \mathcal A. \label{eq:Jpq-MLmGWsSchroEqu2019}
	\end{align}
	
	We conclude that there exists $\Omega_{I'} \subset \Omega \colon \mathbb P(\Omega_{I'}) = 0$ such that
	\begin{equation} \label{eq:I'-MLmGWsSchroEqu2019}
	\forall \omega \in \Omega \backslash \Omega_{I'}, \quad \lim_{K \to +\infty} \sum_{p,q=0}^2 I'_{p,q}(\hat x,K,\tau,\omega) = 0.
	\end{equation}
	The reason for \eqref{eq:I'-MLmGWsSchroEqu2019} to hold is that
	\begin{align}
	\big| \sum_{p,q=0}^2 I'_{p,q}(\hat x,K,\tau,\omega) \big|
	& \leq \frac 1 {K^{m_f - m_q}} \sum_{p,q=0}^2 \Big[ \big( \frac 1 K \int_K^{2K} k^{m_f} |F_p(\hat x,k,\omega)|^2 \dif k \big)^{\frac 1 2} \nonumber\\
	& \ \ \ \cdot \big( \frac 1 K \int_K^{2K} k^{m_f} |F_p(\hat x,k+\tau,\omega)|^2 \dif k \big)^{\frac 1 2} \Big]. \label{eq:I'pq-MLmGWsSchroEqu2019}
	\end{align}
	By \eqref{eq:Ipq1-MLmGWsSchroEqu2019}, \eqref{eq:F1IntConv-MLmGWsSchroEqu2019} and \eqref{eq:F2IntConv-MLmGWsSchroEqu2019}-\eqref{eq:F2IntConvTau-MLmGWsSchroEqu2019}, as well as a similar argument that exchanges the logical order between $\omega$ and $y$,
	we can prove that there exists $\Omega_0 \colon \mathbb P(\Omega_0) = 0$ such that for every $\omega \in \Omega \backslash \Omega_0$, one can find $S_\omega \subset \R^3 \colon \mathbb L (S_\omega) = 0$ fulfilling that for $\forall y \in \R^3 \backslash S_\omega$, there holds
	\begin{subequations} \label{eq:FFbiniEq-MLmGWsSchroEqu2019}
		\begin{numcases}{}
		\lim_{K \to +\infty} \frac 1 K \int_K^{2K} k^{m_f} |F_0(\hat y,k,\omega)|^2 \dif k = (2\pi)^{3/2} \widehat \mu_f(0), \label{eq:FFbiniEq-a-MLmGWsSchroEqu2019} \medskip\\
		\lim_{K \to +\infty} \frac 1 K \int_K^{2K} k^{m_f} |F_j(\hat y,k,\omega)|^2 \dif k = 0, \quad (j = 1,2), \label{eq:FFbiniEq-b-MLmGWsSchroEqu2019} \medskip\\
		\lim_{K \to +\infty} \frac 1 K \int_K^{2K} k^{m_f} |F_2(\hat y,k + |y|,\omega)|^2 \dif k = 0. \label{eq:FFbiniEq-c-MLmGWsSchroEqu2019}
		\end{numcases}
	\end{subequations}
	Combining \eqref{eq:I'pq-MLmGWsSchroEqu2019}-\eqref{eq:FFbiniEq-MLmGWsSchroEqu2019}, we arrive at \eqref{eq:I'-MLmGWsSchroEqu2019}.
	
	\smallskip
	
	We next analyze $\sum_{p,q=0}^2 L_{p,q}^1(\hat x,K,\tau,\omega)$,
	\begin{align}
	\big| \sum_{p,q=0}^2 L_{p,q}^1(\hat x,K,\tau,\omega) \big|
	& \leq \frac 1 {K^{m_f - m_q}} \sum_{p,q=0}^2 \Big[ \big( \frac 1 K \int_K^{2K} k^{m_f} |F_p(\hat x,k,\omega)|^2 \dif k \big)^{\frac 1 2} \nonumber\\ 
	& \ \ \ \cdot \big( \frac 1 K \int_K^{2K} k^{m_f} |G_p(\hat x,k+\tau,\omega)|^2 \dif k \big)^{\frac 1 2} \Big]. \label{eq:Lpq1Inter-MLmGWsSchroEqu2019}
	\end{align}
	By \eqref{eq:JLpq-MLmGWsSchroEqu2019}-\eqref{eq:Jpq-MLmGWsSchroEqu2019}, \eqref{eq:FFbiniEq-MLmGWsSchroEqu2019} and \eqref{eq:Lpq1Inter-MLmGWsSchroEqu2019} \sq{and the a-priori requirement $m_q < m_f$}, we conclude that
	\begin{equation} \label{eq:Lpq1-MLmGWsSchroEqu2019}
		\lim_{K \to +\infty} \big| \sum_{p,q=0}^2 L_{p,q}^1(\hat x,K,\tau,\omega) \big|
		\lesssim
		\lim_{K \to +\infty} \frac {|\widehat \mu_f(0)|} {K^{m_f - m_q}} = 0, \quad \ass
	\end{equation}
	Similarly, we can show
	\begin{equation} \label{eq:Lpq2-MLmGWsSchroEqu2019}
	\lim_{K \to +\infty} \big| \sum_{p,q=0}^2 L_{p,q}^2(\hat x,K,\tau,\omega) \big| = 0, \quad \ass
	\end{equation}
	Combining \eqref{eq:IJpqDef-MLmGWsSchroEqu2019}, \eqref{eq:J0-MLmGWsSchroEqu2019}-\eqref{eq:I'-MLmGWsSchroEqu2019} and \eqref{eq:Lpq1-MLmGWsSchroEqu2019}-\eqref{eq:Lpq2-MLmGWsSchroEqu2019}, we arrive at
	\begin{equation*}
	\lim_{K \to +\infty} \frac 1 K \int_K^{2K} k^{m_q} 16\pi^2 \overline{u^\infty(\hat x,k,-\hat x,\omega)} u^\infty(\hat x,k+\tau,-\hat x,\omega) \dif k = (2\pi)^{3/2} \widehat \mu_q(2\tau \hat x).
	\end{equation*}
	
	The proof is complete.
\end{proof}

%

\section*{Acknowledgements}

The work of J.~Li was partially supported by the NSF of China under the grant No.~11571161 and 11731006, the Shenzhen Sci-Tech Fund No.~JCYJ20170818153840322. 
The work of H.~Liu was partially supported by Hong Kong RGC general research funds, No.~12302017,  No.~12301218 and No.~12302919. The authors would like to thank the anonymous referee for many insightful and constructive comments and suggestions, which have led to significant improvements on the results as well as the presentation of the paper.


\begin{thebibliography}{99}
	
	
	
	\bibitem{bao2016inverse}
	G.~Bao, C.~Chen, and P.~Li,
	\newblock {\it Inverse random source scattering problems in several dimensions},
	\newblock SIAM/ASA J. Uncertain. Quantif., {\bf 4} (2016), pp.~1--25.
	
	\bibitem{bao2010multi}
	G.~Bao, J.~Lin, and F.~Triki,
	\newblock {\it A multi-frequency inverse source problem},
	\newblock J. Differential Equations, {\bf 249} (2010), pp.~3443--3465.
	
	\bibitem{Bsource}
	E.~{Bl{\aa}sten},
	\newblock {\it Nonradiating sources and transmission eigenfunctions vanish at corners and edges},
	\newblock SIAM J. Math. Anal., \textbf{50} (2018), 6255--6270 .
	
	\bibitem{BL2018}
	E.~Bl{\aa}sten and H.~Liu,
	\newblock {\it Scattering by curvatures, radiationless sources, transmission eigenfunctions and inverse scattering problems},
	\newblock arXiv:1808.01425, 2018.
	
	
	
	\bibitem{Blomgren2002}
	P.~Blomgren, G.~Papanicolaou, and H.~Zhao,
	\newblock {\it Super-resolution in time-reversal acoustics},
	\newblock J. Acoust. Soc. Am., {\bf 111} (2002), pp.~230--248.
	
	\bibitem{Borcea2006}
	L.~Borcea, G.~Papanicolaou, and C.~Tsogka,
	\newblock {\it Adaptive interferometric imaging in clutter and optimal illumination},
	\newblock Inverse Problems, {\bf 22} (2006), pp.~1405--1436.
	
	\bibitem{Borcea2002}
	L.~Borcea, G.~Papanicolaou, C.~Tsogka, and J.~Berryman,
	\newblock {\it Imaging and time reversal in random media},
	\newblock Inverse Problems, {\bf 18} (2002), pp.~1247--1279.
	
	\bibitem{caro2016inverse}
	P.~Caro, T.~Helin, and M.~Lassas,
	\newblock {\it Inverse scattering for a random potential},
	\newblock Anal.~Appl., {\bf 17} (2019), pp.~513--567.
	
	
	
	\bibitem{ClaKli}
	C.~Clason and M.~Klibanov,
	\newblock {\it The quasi-reversibility method for thermoacoustic tomography in a heterogeneous medium},
	SIAM J. Sci. Comput., {\bf 30} (2007), pp.~1--23.
	
	
	\bibitem{colton2012inverse}
	D.~Colton and R.~Kress,
	\newblock {\it Inverse Acoustic and Electromagnetic Scattering Theory}, 3rd Edition,
	\newblock Springer Science \& Business Media, New York, 2013.
	

    \bibitem{DLL1}
    Y.~Deng, J.~Li and H.~Liu, {\it On identifying magnetized anomalies using geomagnetic monitoring}, 
    {Arch. Ration. Mech. Anal.}, \textbf{231} (2019), pp.~153--187.
    
    \bibitem{DLL2}
    Y.~Deng, J.~Li and H.~Liu, {\it On identifying magnetized anomalies using geomagnetic monitoring within a magnetohydrodynamic model}, 
    {Arch. Ration. Mech. Anal.}, \textbf{235} (2020), pp.~691--721. 
	
	\bibitem{eskin2011lectures}
	G.~Eskin,
	\newblock {\it Lectures on Linear Partial Differential equations},
	\newblock Grad. Stud. Math., Vol. 123, AMS, Providence, 2011.
	
	
	\bibitem{grafakos2014KatoPonce}
	L.~Grafakos and S.~Oh,
	\newblock {\it The Kato-Ponce inequality},
	\newblock Commun. Part. Diff. Equ., {\bf 39} (2014), pp.~1128--1157.
	
	\bibitem{GS1}
	R.~Griesmaier and J.~Sylvester, {\it Uncertainty principles for three-dimensional inverse source problems}, SIAM J. Appl. Math., {\bf 77} (2017), pp.~2066--2092. 
	
	\bibitem{griffiths2016introduction}
	D.~J.~Griffiths,
	\newblock {\it Introduction to Quantum Mechanics},
	\newblock Cambridge Univ. Press, Cambridge, 2016.	
	
	
	
	
	\bibitem{hormander1985analysisI}
	L.~H{\"o}rmander,
	\newblock {\it The Analysis of Linear Partial Differential Operators I. Distribution Theory and Fourier Analysis}, Second Edition,
	\newblock Springer, Berlin, 1990.
	
	
	\bibitem{Isakov1990}
	V.~Isakov,
	\newblock {\it Inverse Source Problems},
	\newblock Mathematical Surveys and Monographs, 34. Amer. Math. Soc., Providence, 1990.
	
	\bibitem{IsaLu}
	V.~Isakov and S.~Lu,
	\newblock {\it Increasing stability in the inverse source problem with attenuation and many frequencies},
	\newblock SIAM J. Appl. Math., {\bf 78} (2018), pp.~1--18. 
	
	\bibitem{Klibanov2013}
	M.~Klibanov,
	\newblock {\it Thermoacoustic tomography with an arbitrary elliptic operator},
	\newblock Inverse Problems, {\bf 29} (2013), 025014.
	
	\bibitem{KM1}
	C.~Knox and A.~Moradifam,
	\newblock {\it Determining both the source of a wave and its speed in a medium from boundary measurements},
	\newblock arXiv:1803.06750, 2018.
	
	\bibitem{KS1}
	S.~Kusiak and J.~Sylvester,
	\newblock {\it The scattering support},
	\newblock Comm. Pure Appl. Math., {\bf 56} (2003), pp.~1525--1548. 
	
	
	\bibitem{LassasA}
	M.~Lassas, L.~P{\"{a}}iv{\"{a}}rinta, and E.~Saksman,
	\newblock {\it Inverse problem for a random potential},
	Contemp. Math., {\bf 362}, Amer. Math. Soc., Providence, RI, 2004. 
	
	\bibitem{Lassas2008}
	M.~Lassas, L.~P{\"{a}}iv{\"{a}}rinta and E.~Saksman,
	\newblock {\it Inverse Scattering Problem for a Two Dimensional Random Potential},
	\newblock Comm. Math. Phys., {\bf 279} (2008), pp.~669--703.
	
	
	\bibitem{LiHelinLiinverse2018}
	J.~Li, T.~Helin, P.~Li,
	\newblock {\it Inverse random source problems for time-harmonic acoustic and elastic waves},
	\newblock arXiv:1811.12478, 2018.
	
	
	
	\bibitem{llm2018random}
	J.~Li, H.~Liu and S.~Ma,
	\newblock {\it Determining a random Schr\"odinger equation with unknown source and potential},
	\newblock SIAM J. Math. Anal., \textbf{51} (2019), pp. 3465--3491. 
	
	
	
	\bibitem{liu2015determining}
	H.~Liu and G.~Uhlmann,
	\newblock {\it Determining both sound speed and internal source in thermo- and photo-acoustic tomography},
	\newblock Inverse Problems, {\bf 31} (2015), 105005.
	
	\bibitem{Lu1} 
	Q.~L\"u and X.~Zhang,
	\newblock {\it Global uniqueness for an inverse stochastic hyperbolic problem with three unknowns}, Comm. Pure Appl. Math., {\bf 68} (2015), pp.~948--963.
	
	\bibitem{mclean2000strongly}
	W.~McLean,
	\newblock {\it Strongly Elliptic Systems and Boundary Integral Equations},
	\newblock Cambridge Univ. Press, Cambridge, 2000.
	
	\bibitem{pozrikidis2016fractional}
	C.~Pozrikidis,
	\newblock {\it The Fractional Laplacian},
	\newblock Chapman \& Hall/CRC, New York, 2016.

	
	
	\bibitem{royden2000real}
	H.~L.~Royden and P.~M.~Fitzpatrick,
	\newblock {\it Real Analysis, Fourth Edition},
	\newblock Prentice Hall, Upper Saddle River, 2010.
	
	\bibitem{rozanov1982markov}
	Y.~A.~Rozanov,
	\newblock {\it Markov random fields},
	\newblock Springer, New York, 1982.
	
	\bibitem{WangGuo17}
	X.~Wang, Y.~Guo, D.~Zhang and H.~Liu, {\it Fourier method for recovering acoustic sources from multi-frequency far-field data}, Inverse Problems, {\bf 33} (2017), 035001.
	
	\bibitem{wong2014pdo}
	M.~W.~Wong,
	\newblock {\it An introduction to pseudo-differential operators}, 3rd Edition,
	\newblock World Scientific Pub. Co. Pte. Ltd, Hackensack, 2014.
	
	\bibitem{Yuan1}
	G.~Yuan,
	\newblock {\it Determination of two kinds of sources simultaneously for a stochastic wave equation},
	Inverse Problems, {\bf 31} (2015), 085003.
	
	\bibitem{Zhang2015}
	D.~Zhang, Y.~Guo, J.~Li and H. Liu, 
	\newblock {\it Retrieval of acoustic sources from multi-frequency phaseless data},
	Inverse Problems {\bf 34} (2018), 094001.
	
	
	
\end{thebibliography}
\end{document}